\newtheorem{theorem}{Theorem}[section]
\newtheorem{prop}[theorem]{Proposition}
\newtheorem{lemma}[theorem]{Lemma}
\newtheorem{remark}[theorem]{Remark}
\newtheorem{definition}[theorem]{Definition}
\newtheorem{cor}[theorem]{Corollary}
\numberwithin{equation}{section}
\def\pf{{\it Proof:}~}
\begin{document}

\title[Ricci flow on noncompact Riemannian manifolds]{Short-time existence of the Ricci flow on noncompact Riemannian manifolds}
\author{Guoyi Xu}
\date{\today}
\address{Department of Mathematics\\University of California, Irvine\\CA 92617}
\email{guoyixu@math.uci.edu}

\begin{abstract}
In this paper, we give the first detailed proof of the short-time existence of Deane Yang's local Ricci flow. Then using the local Ricci flow, we prove
the short-time existence of the Ricci flow on noncompact manifolds, whose Ricci
curvature has global lower bound and sectional curvature has only local average
integral bound. The short-time existence of the Ricci flow on noncompact manifolds with bounded curvature was studied by Wan-Xiong Shi in 1990s. As a corollary of our main theorem, we get the short-time existence part of Shi's theorem in this more general context. \\[3mm]
Mathematics Subject Classification: 35K15, 53C44
\end{abstract}

\maketitle

\tableofcontents

\section{Introduction}
In his well-known paper \cite{RT}, R. Hamilton introduced the Ricci flow on compact Riemannian manifolds which has proved to be very useful in the research of differential geometry and topology. Let us recall the definition of the Ricci flow, it is the solution of the evolution equation deforming the metric on any $n$-dimensional Riemannian manifold $(M^n, g_{ij})$:
\begin{equation}\label{1.1}
\left\{
\begin{array}{rl}
\frac{\partial}{\partial t}g_{ij}(x,t)&= -2R_{ij}(x,t) \\
g(x,0)&= g_{0}(x)
\end{array} \right.
\end{equation}
where $R_{ij}$ is the Ricci curvature of $M^n$ at time $t$.

The first thing in the study of the Ricci flow on Riemannian manifolds is the short-time existence of the solution. In the case where $M^n$ is compact, it is well known that for any given initial metric $g_{ij}$ on $M^n$, the evolution equation (\ref{1.1}) always has a unique solution for a short time (see \cite{DeTurck}, \cite{RT}). 

To prove Yau's Uniformization Conjecture, W-X Shi initiated studying
the Ricci flow on complete noncompact Riemannian manifolds. The short-time
existence problem of the evolution equation (\ref{1.1}) is more difficult than the compact case. In \cite{Shi}, Shi proved that if $(M^n, g_{ij})$ is complete noncompact with bounded curvature, then the Ricci flow (\ref{1.1}) has a solution with bounded curvature on a short time interval. Note Shi's theorem requires that the initial manifold $(M^n, g_0)$ has the bounded geometry, i.e. the curvature of $M^n$ must have point-wise estimate.  

In \cite{CRI}, Deane Yang introduced the local Ricci flow. We recall the definition of local Ricci flow firstly. Let $M^n$ be a smooth $n$-dimensional manifold with Riemannian metric $g_0$, $\Omega$ an open bounded domain of $M^n$ and $n \geq 3$. Let $\chi$ be a nonnegative smooth compactly supported function on $\Omega$, and $0\leq \chi\leq 1$. The local Ricci flow is the solution of the following evolution equation:
\begin{equation}\label{2.1}
\frac{\partial g}{\partial t}=-2\chi^2 Rc(g),\quad g(0)=g_0 , \quad \chi\in C_0^{\infty}(\Omega)
\end{equation}

In this paper, using the local Ricci flow, we prove
\begin{theorem}\label{thm 1.1}
{Assume $(M^n, g_0)$ is a $n$-dimensional $(n\geq 3)$ complete noncompact Riemannian manifold, satisfying the following conditions:
\begin{equation}\label{1.2}
{\left\{
\begin{array}{rl}
\Big(\fint_{B_x(4r_0)} h^{\frac{2n}{n-2}} dV_{g_0} \Big)^{\frac{n-2}{n}} &\leq  A_0 r_0^2 \fint_{B_x(4r_0)}|\nabla h|^2 dV_{g_0}\ , \\
\Big(\fint_{B_x(r_0)} |Rm(g_0)|^{p_0} dV_{g_0} \Big)^{\frac{1}{p_0}} &\leq K_1
\end{array}\right.
}
\end{equation}
for any $x\in M$ and $h\in C_0^{\infty}(B_x(4r_0))$, where $A_0\geq 1$, $p_0> \frac{n}{2}$, $K_1$ and $r_0$ are positive constants.

Then the Ricci flow (\ref{1.1}) has a smooth solution $g_{ij}(x,t)$ on $[0, \frac{T_{2}}{2}]$ (where $T_2$ is defined in (\ref{5.25}) ), and satisfies the following estimates. For any integer $m\geq 0$, there exists a positive constant $C(A_0,K_1,m,n,p_0,r_0)$ depending only on $A_0$, $K_1$, $m$, $n$, $p_0$ and $r_0$, such that
\begin{equation}\label{1.3}
{\sup_{x\in M}|\nabla^m Rm(x,t)|\leq \frac{C(A_0,K_1,m,n,p_0,r_0)}{t^{\frac{m}{2}}}(t^{-\frac{n}{2p_0}}+ t^{\frac{1}{p_0}}), \quad 0< t\leq \frac{T_{2}}{2}
}
\end{equation}
Especially, 
\begin{equation}\label{1.4}
{\sup_{x\in M}|Rm(x,t)|\leq C_2(t^{-\frac{n}{2p_0}}+ C_1^{\frac{n+ 2}{2p_0}}t^{\frac{1}{p_0}}), \quad 0< t\leq \frac{T_{2}}{2}
}
\end{equation}
where $C_1$ and $C_2$ are define in (\ref{5.9}) and (\ref{5.16}).
}
\end{theorem}


Theorem \ref{thm 1.1} requires only the integral conditions on the curvature tensor of the initial manifold and a local Sobolev inequality. By \cite{Saloff}, local Sobolev inequality is valid on Riemannian manifolds whose Ricci curvature has a lower bound. Hence we can generalize Shi's theorem in more general context in the following corollary.

\begin{cor}\label{cor 1.2}
{Assume $(M, g_0)$ is a $n$-dimensional $(n\geq 3)$ complete noncompact Riemannian manifold, satisfying the following conditions:
\begin{equation}\label{1.5}
{\left\{
\begin{array}{rl}
Rc(g_0) &\geq -Kg_0\ , \\
\Big(\fint_{B_x(r_0)} |Rm(g_0)|^{p_0} dV_{g_0} \Big)^{\frac{1}{p_0}} &\leq K_1
\end{array}\right.
}
\end{equation}
for any $x\in M$, where $p_0> \frac{n}{2}$, $r_0$, $K$ and $K_1$ are positive constants. 

Then the Ricci flow (\ref{1.1}) has a smooth solution $g_{ij}(x,t)$ on $[0, T]$, and satisfies the following estimates. For any integer $m\geq 0$, there exists positive constant $C(K,K_1,m,n,p_0,r_0)$ such that
\begin{equation}\nonumber
{\sup_{x\in M}|\nabla^m Rm(x,t)|\leq \frac{C(K,K_1,m,n,p_0,r_0)}{t^{\frac{m}{2}}} (t^{-\frac{n}{2p_0}}+ t^{\frac{1}{p_0}} ), \quad 0< t\leq T
}
\end{equation}
where $T= T(K,K_1,n,p_0,r_0)$ is a positive constant depending only on $K$, $K_1$, $n$, $p_0$ and $r_0$.
}
\end{cor}

In section $2$, we give the detailed proof of short-time existence of the local Ricci flow following the  brief sketch in \cite{CRI}. In the sketch proof of Theorem $8.2$ of \cite{CRI}, Yang considered a similar but different differential system (see the top of page $91$ of \cite{CRI}) comparing (\ref{2.5}), and the main difference is that our principal coefficients in (\ref{2.5}) are $(\chi^2+ \epsilon)u^{\alpha \beta}$ instead of $(\chi^2+ \epsilon)v^{\alpha \beta}$. This is the key point why we can get $C^0$ a-priori estimate of $v_{ij}$ (see (\ref{2.12}) etc.) and the higher order estimate further in Proposition \ref{prop 2.4}.


More concretely, for (\ref{2.5}) we prove that $\Phi_{\epsilon}:\ U\rightarrow V$ is a contraction map on $[0, T]$, where $T$ is independent of $\epsilon$. Then we use fixed point method to get solution of (\ref{2.5}) on $[0, T]$. Finally, let $\epsilon\rightarrow 0$, we get the solution of (\ref{2.2}) on $[0, T]$. And by regularity of parabolic system, the solution is smooth. All the estimates are about metric tensors and the norms we considered are respect to the initial metric $g_0$ of manifold $M^n$. 

In spirit, our proof is close to the original proof of Ricci flow's short-time existence on compact manifolds by R. Hamilton. In \cite{RT}, R. Hamilton used Nash-Moser inverse function theorem, in this paper we use fixed-point argument in a Sobolev-type inequality.

In section $3$, we discuss the evolution equations and inequalities of curvature tensors under local Ricci flow. In section $4$, extension of local Ricci flow is discussed. The main theorem (Theorem \ref{thm W10}) of this section was proved in \cite{CRI}, recently the complete proof was given by Wang among other things (see \cite{Wang}). The proof is non-trivial and different from the proof of Ricci flow's extension under the bound of curvature. In the proof, the interplay among the estimates of $\nabla^i \chi$, $\nabla^j Rm$ and $\nabla^k(\chi^2 Rm)$ are interesting and subtle. Following the proof of Wang, we prove this extension theorem again to make this paper self-contained. Note all the estimates in section $2$, $3$ and $4$ are valid for any smooth initial manifold $(M^n, g_0)$.

After establishing short-time existence of the local Ricci flow and its extension theorem, we try to get the existence time estimate of the local Ricci flow independent of the domain $\Omega$. This is done in section $5$ and section $6$. Note the time estimate which we get in the proof of short-time existence of the local Ricci flow, depends on $\Omega$. To get the time estimate independent of $\Omega$, we need to add some assumptions on the initial manifold $(M^n, g_0)$.

The key point of section $5$ is: If we assume initial local $L^p (p> \frac{n}{2})$ average integral bound of curvature tensor and local average Sobolev inequality, we can get the parabolic version of ``local local" energy estimate of curvature tensor, where the coefficient of $\fint \xi_i^{2q- 2}\chi^{2p'} |Rm|^p$ is independent of $\Omega$. Here ``local local" means that we times two cut-off functions with curvature tensor. One is $\xi_i$, which is from partition of unity of the good cover mentioned before; the other is $\chi$, which is the cut-off function in the local Ricci flow. 

Then using modified Moser iteration, we can get point-wise estimate of $\chi^2 Rm$ on $[0, T]$, where $T$ is independent of $\Omega$. By the local Ricci flow's extension theorem proved in section $4$, we get our existence time estimate independent of $\Omega$. The precise time estimate we get is expressed in explicit form of curvature bound, diameter of balls in good cover and local Sobolev constant, and this explicit estimate has its independent interest and application (see the remarks \ref{rem 5.15} and \ref{rem 7.1}). Another reason we get time estimate in such explicit expression (with $A_0$, $r_0$, $K_1$, see (\ref{5.21}) and (\ref{5.24}) etc.) is that we are interested in $Rc\geq 0$ case, where we can allow $r_0\rightarrow \infty$ if there is suitable initial curvature integral bound on $B(r_0)$ as $r_0\rightarrow \infty$.

In many situations, critical metrics are interesting, and we have only the $L^{\frac{n}{2}}$ bound of $Rm$. In section $6$, using different energy estimates we get similar existence time estimate of the local Ricci flow on manifolds with small $L^{\frac{n}{2}}$ bound of $Rm$, $L^{p} (p> \frac{n}{2})$ bound of $Rc$, local Sobolev inequality and volume doubling property.
 
In section 7, using a family of local Ricci flows with respect to a family of
expanding domains $\{\Omega_i\}_{i= 1}^{\infty}$ and the curvature estimates we get in section $5$ and $6$, we can apply compactness results in Ricci flow to show that a subsequence of this
family of local Ricci flows converges to the Ricci flow on the whole manifold when $i\rightarrow \infty$.

Finally, it is worth mentioning that our arguments in section $5$ and $6$ also apply
to compact manifolds. On compact manifolds, we can choose $\chi\equiv 1$ in (\ref{2.1}), then local Ricci flow is just Ricci flow. Note the minimal existence time dependent on curvature bound is well-known (see Corollary $7.7$ in \cite{RI}), the minimal existence time we get depends on the integral bound of curvature, which has its own interest.

Acknowledgement: The author is grateful to his advisor Professor Robert
Gulliver for reading this paper and providing comments and suggestions. He would
like to thank his teacher Professor Jiaping Wang for suggesting this problem and
bringing his attention to the article [15], also for his encouragement and many
helpful discussions. He would like to thank King-Yeung Lam, Weiwei Wu and
Weiyi Zhang for useful discussions.

 \section{Short-time existence of the local Ricci flow}

To prove the short-time existence of the local Ricci flow, we follow the brief sketch in Deane Yang's \cite{CRI}. By DeTurck's trick, we can equivalently consider the following local Ricci-DeTurck flow:
\begin{equation}\label{2.2}
\left\{
\begin{array}{rl}
\frac{\partial }{\partial t}g_{ij}&= -2\chi^2 R_{ij} + \nabla_{i}(\chi^2 \mathscr{W})_{j} + \nabla_{j}(\chi^2 \mathscr{W})_{i}\\
\frac{\partial }{\partial t} \phi_t(p)&= -\chi^2(p) \mathscr{W}(\phi_t(p), t), \quad p\in M\\
\phi_0 &= Id_{M}, \quad g_{ij}(x, 0)= \tilde{g}_{ij}(x)\triangleq (g_0)_{ij}(x)
\end{array} \right.
\end{equation}
where $\mathscr{W}_j= g_{jk}g^{pq}(\Gamma_{pq}^{k}- \tilde{\Gamma}_{pq}^{k})$. The second equation in (\ref{2.2}) is a quasilinear ODE, the solution on $M\setminus \Omega$ is $Id_{M\setminus \Omega}$, so one only need to consider the first family of equations in (\ref{2.2}).
 
\begin{lemma} \label{lem 2.1}
{The following modified evolution equations (\ref{2.4}) is a degenerate parabolic system.
\begin{equation}\label{2.3}
\left\{
\begin{array}{rl}
\frac{\partial }{\partial t}g_{ij}&= -2\chi^2 R_{ij} + \nabla_{i}(\chi^2 \mathscr{W})_{j} + \nabla_{j}(\chi^2 \mathscr{W})_{i}\\
g_{ij}(x, 0)&= \tilde{g}_{ij}(x)
\end{array} \right.
\end{equation}
}
\end{lemma}

\pf
{On $M\setminus \Omega$, $g_{ij}(x,t)\equiv \tilde{g}_{ij}(x)$, one only need to consider the evolution equations on $\Omega$. Calculate (\ref{2.3}) directly, use Lemma $2.1$ in \cite{Shi}, we get 
\begin{equation}\label{2.4}
\left\{
\begin{array}{rl}
\frac{\partial }{\partial t}g_{ij}&= \chi^2 g^{\alpha \beta} \tilde{\nabla}_{\alpha}\tilde{\nabla}_{\beta} g_{ij}- \chi^2 (g^{\alpha \beta} g_{ip} \tilde{g}^{pq} \tilde{R}_{j \alpha q \beta} + g^{\alpha \beta} g_{jp} \tilde{g}^{pq} \tilde{R}_{i \alpha q \beta})\\
& +\frac{1}{2}\chi^2 g^{\alpha \beta}g^{pq} (\tilde{\nabla}_i g_{p \alpha}\tilde{\nabla}_j g_{q \beta} +
2 \tilde{\nabla}_{\alpha} g_{jp}\tilde{\nabla}_q g_{i \beta} - 2 \tilde{\nabla}_{\alpha} g_{jp}\tilde{\nabla}_{\beta} g_{iq}\\
& \quad - 2 \tilde{\nabla}_{j} g_{p \alpha}\tilde{\nabla}_{\beta} g_{iq} -2 \tilde{\nabla}_{i} g_{p \alpha}\tilde{\nabla}_{\beta} g_{jq} )\\
& + \chi\chi_i g^{pq}(\tilde{\nabla}_p g_{jq}+ \tilde{\nabla}_{q} g_{pj}- \tilde{\nabla}_{j} g_{pq})+ \chi\chi_j g^{pq}(\tilde{\nabla}_p g_{iq}+ \tilde{\nabla}_{q} g_{pi}- \tilde{\nabla}_{i} g_{pq})\\
g_{ij}(x, 0)&= \tilde{g}_{ij}(x), \quad x\in \Omega.\\
g_{ij}(x, t)&= \tilde{g}_{ij}(x), \quad x\in \partial\Omega , \  t\in [0, T].
\end{array} \right.
\end{equation}
where $\chi_i= \tilde{\nabla}_i \chi$. Note the principal coefficients of (\ref{2.4}) are $(\chi^2 g^{\alpha \beta})$, it is a degenerate parabolic system. 
}
\qed

We define the following tensor-valued function spaces and norms:
\begin{definition} \label{def 2.2}
{\begin{align}
\mathfrak A_{T}&= \{U \big| \frac{1}{2}\tilde{g}_{ij}\leq u_{ij}\leq 2\tilde{g}_{ij}; \ U(x,0)=\tilde{g}(x), x\in \Omega ; \nonumber \\
U(x,t)&= \tilde{g}, x\in \partial\Omega ;\  U\in L^{\infty} \big(\bar{\Omega}\times [0, T], \otimes^2 T^{*}(\bar{\Omega})\big) \}  \nonumber \\
\big| U(x,t)\big|_{k}&= \Big(\sum_{|\alpha|\leq k} \int_{\Omega} |\tilde{\nabla}^{\alpha} U(x,t)|^2 d\mu_{\tilde{g}}\Big)^{\frac{1}{2}}, \ \ \big| U(x,t)\big|_{T, k}= \sup_{t\in [0, T]}(\big| U(x,t)\big|_{k}) \nonumber \\
\Vert f\Vert_k &\triangleq \sum_{|\alpha|\leq k}|\tilde{\nabla}^{\alpha} f|_{L^{\infty}(\Omega)}, \quad k= 0, 1, 2, \cdots \nonumber
\end{align}
where  $U: \big(\bar{\Omega}\times [0, T]\rightarrow \otimes^2 T^{*}(\bar{\Omega})\big)$, $f$ is a tensor field or function defined on $\Omega$ and $|\nabla^{\alpha} U(x,t)|^2= \tilde{g}^{i_1 j_1}\cdots \tilde{g}^{i_s j_s} \tilde{g}^{pk}\tilde{g}^{ql} (\tilde{\nabla}_{i_1}\cdots \tilde{\nabla}_{i_s} u_{pq})(\tilde{\nabla}_{j_1}\cdots \tilde{\nabla}_{j_s} u_{kl}) $, where $|\alpha|= s\geq 0$. And 
\[\mathbb B_m(K)_{T}= \{U \big| \ | U|_{T, m}\leq K; U\in L^{\infty} \big(\bar{\Omega}\times [0, T], \otimes^2 T^{*}(\bar{\Omega})\big) \} \]
where $m\geq 0$, $K> 0$.
}
\end{definition}

We consider the following strictly parabolic system by adding $\epsilon$ to $\chi^2$ in the principal coefficients, and $\epsilon\leq 1$ is some positive constant.
\begin{equation}\label{2.5}
\left\{
\begin{array}{rl}
\frac{\partial }{\partial t}v_{ij}&= (\chi^2 + \epsilon)u^{\alpha \beta}(v_{ij})_{\alpha \beta} + D_{ij}(U, V)+ B_{ij}(U, \tilde{\nabla}U) \\
v_{ij}(x, 0)&= \tilde{g}_{ij}(x) \quad x\in \Omega.\\
v_{ij}(x, t)&= \tilde{g}_{ij}(x) \quad x\in \partial\Omega , \ t\in [0, T].
\end{array} \right.
\end{equation}

where $U= (u_{ij})_{i,j= 1}^{n}$, $V= (v_{ij})_{i,j= 1}^{n}$, $(v_{ij})_{\alpha \beta}= \widetilde{\nabla}_{\alpha} \widetilde{\nabla}_{\beta} (v_{ij})$,
\begin{align}
D_{ij}(U, V)&= -\chi^2[(u^{\alpha \beta} \tilde{g}^{pq} \tilde{R}_{j \alpha q \beta})v_{ip} + (u^{\alpha \beta} \tilde{g}^{pq} \tilde{R}_{i \alpha q \beta})v_{jp}]\ , \nonumber \\
B_{ij}(U, \tilde{\nabla}U)&= E_{ij} + F_{ij}\ ,  \nonumber \\
E_{ij}&= \frac{1}{2}\chi^2 u^{\alpha \beta} u^{pq} (\tilde{\nabla}_i u_{p \alpha}\tilde{\nabla}_j u_{q \beta} + 2 \tilde{\nabla}_{\alpha} u_{jp}\tilde{\nabla}_q u_{i \beta} - 2 \tilde{\nabla}_{\alpha} u_{jp}\tilde{\nabla}_{\beta} u_{iq} \nonumber \\
&\quad - 2 \tilde{\nabla}_{j} u_{p \alpha}\tilde{\nabla}_{\beta} u_{iq} -2 \tilde{\nabla}_{i} u_{p \alpha}\tilde{\nabla}_{\beta} u_{jq} )\ ,  \nonumber \\
F_{ij}&=  \chi\chi_i u^{pq}(\tilde{\nabla}_p u_{jq}+ \tilde{\nabla}_{q} u_{pj}- \tilde{\nabla}_{j} u_{pq})+ \chi\chi_j u^{pq}(\tilde{\nabla}_p u_{iq}+ \tilde{\nabla}_{q} u_{pi}- \tilde{\nabla}_{i} u_{pq})\ , \nonumber 
\end{align}

We define $\Phi_{\epsilon}(U)= V$, if $V$ is the solution of (\ref{2.5}). And $A_{\epsilon}^{\alpha \beta}\triangleq (\chi^2+ \epsilon)u^{\alpha \beta}$. Then
\begin{equation} \label{2.6}
{\frac{\partial }{\partial t}v_{ij}= A_{\epsilon}^{\alpha \beta}(v_{ij})_{\alpha \beta} + D_{ij}(U, V)+ B_{ij}(U, \tilde{\nabla} U)
}
\end{equation}

\begin{remark}\label{rem 2.1}
{In (\ref{2.6}), there is no $\nabla V$ term, and this is one of the reasons we can get $C^0$-estimate of $V$ in Proposition \ref{prop 2.4}.
}
\end{remark}

We firstly prove a technical lemma about the norm of $U^{-1}$ using interpolation inequalities. We define the Sobolev constant $C_s(\Omega,g(t))$ for $\Omega\subset (M,g(t))$ as the following:
\begin{align}
|h|_{C^0(\Omega)}\leq C_s(\Omega, g(t))\Big(\int_{\Omega}|\nabla h|_{g(t)}^2 dV_{g(t)}\Big)^{\frac{1}{2}}, \quad \forall h\in C_0^{\infty}(\Omega) \label{2.7}
\end{align}
In this section we only need to use $C_s(\Omega, g_0)$ (which are equalt to $C_s(\Omega, g(0))= C_s(\Omega, \tilde{g})$), because the covariant derivatives, tensor norm considered in this section are all respect to $\tilde{g}$.  

Choose $G= 2\sqrt{n} V_{\tilde{g}}(\Omega)^{\frac{1}{2}}$ and some fixed $\tilde{p}> \min\{\frac{n}{2}, 3\}$ in the rest of this section. Note $\big| \tilde{g}\big|_{m}= \sqrt{n} V_{\tilde{g}}(\Omega)^{\frac{1}{2}}$ for any $m\geq 0$.

\begin{lemma}\label{lem 2.3}
{Assume $U\in \mathfrak A_T$ and $|U^{-1}(x, t)|_{L^{\infty}}\leq C_0$ for some $t\in [0, T]$, then 
\[ \big|U^{-1}(x, t)\big|_{k+1}\leq C(k,C_0,C_s(\Omega, g_0)) \big(\big|U(x, t)\big|_{\tilde{p}+ 1}+ 1 \big)^{k+1}(\big|U(x, t) \big|_{k+1}+ 1), \] 
where $k= 0, 1, 2, \cdots$.
}
\end{lemma}

\pf
{\begin{align}
&\big|\tilde{\nabla}^{k+1} U^{-1}\big|_{L^2}= |\tilde{\nabla}^k(U^{-1}U^{-1}\tilde{\nabla} U)|_{L^2}= 
|\tilde{\nabla}^k(U^{-2}\tilde{\nabla} U)|_{L^2} \nonumber \\
&\leq C(k)\big((|U^{-2}|_{L^{\infty}}+ 1) |\tilde{\nabla} U|_k + |U^{-2}|_k (|\tilde{\nabla} U|_{L^{\infty}}+ 1) \big) \nonumber \\
&\leq C(k, C_0)|\tilde{\nabla} U|_k + C(k, C_0, C_s(\Omega, g_0))|U^{-1}|_k (|U^{-1}|_{L^{\infty}}+ 1) (|U|_{\tilde{p}+ 1}+ 1) \nonumber \\
&\leq C(k, C_0, C_s(\Omega, g_0))|U|_{k+1} +C(k, C_0, C_s(\Omega, g_0))|U^{-1}|_k (|U|_{\tilde{p}+ 1}+ 1) \nonumber
\end{align}

Let $a_k= \big|U^{-1} \big|_k$, $c_k=\big|U \big|_k$, $b= (\big| U\big|_{\tilde{p}+ 1}+ 1)$, then $a_{k+1}\leq C(k, C_0, C_s(\Omega, g_0))c_{k+1}+C(k, C_0, C_s(\Omega, g_0))ba_k$. By induction, it is easy to get
\[a_{k+1}\leq C(k, C_0, C_s(\Omega, g_0))(b+ 1)^{k+1}(c_{k+1}+1)\]
Hence
\[\big|U^{-1}\big|_{k+1}\leq C(k, C_0, C_s(\Omega, g_0))(\big|U\big|_{\tilde{p}+ 1}+ 1)^{k+1}(\big|U \big|_{k+1}+ 1)\]
}
\qed

\begin{prop} \label{prop 2.4}
{There exists some $t_3= t_3(n,\tilde{p}, G, \Vert\chi\Vert_{\tilde{p}+ 2}, \Vert\widetilde{Rm}\Vert_{\tilde{p}+ 2}, C_s(\Omega, g_0))>0$, such that $\Phi_{\epsilon}: \mathfrak A_{t_3}\cap \mathbb B_{\tilde{p}+ 2}(G)_{t_3} \rightarrow \mathfrak A_{t_3}\cap \mathbb B_{\tilde{p}+ 2}(G)_{t_3}$.
}
\end{prop}

\pf
{Choose $U\in \mathfrak A_{t_1}\cap \mathbb B_{\tilde{p}+ 2}(G)_{t_1}$, $t_1> 0$ is to be determined later, we firstly show that $V\in \mathfrak A_{t_1}$ if $V= \Phi_{\epsilon}(U)$.

Choose a positive integer $m$ such that $2^m> 2n$, and choose a normal coordinate system $\{x^i\}$, such that at one point 
\begin{equation}\nonumber
\{\tilde{g}_{ij}\} = \left(
\begin{array}{cccc}
1 &   &  &\mathbf 0 \\
  & 1 &  & \\
  &   & \ddots & \\
\mathbf 0 & & & 1  
\end{array} \right)
\end{equation} 
\begin{equation}\nonumber
\{v_{ij}\} = \left(
\begin{array}{cccc}
\lambda_1 &   &  &\mathbf 0 \\
  & \lambda_2 &  & \\
  &   & \ddots & \\
\mathbf 0 & & & \lambda_n  
\end{array} \right)
\end{equation}

then
\begin{equation}\nonumber
\{v^{ij}\} = \left(
\begin{array}{cccc}
\lambda_1^{-1} &   &  &\mathbf 0 \\
  & \lambda_2^{-1} &  & \\
  &   & \ddots & \\
\mathbf 0 & & & \lambda_n^{-1}
\end{array} \right)
\end{equation}

Now one can define:
\begin{equation}\nonumber
{\phi_m \triangleq  v^{\alpha_1 \beta_1} \tilde{g}_{\beta_1 \alpha_2}\cdots v^{\alpha_m \beta_m} \tilde{g}_{\beta_m \alpha_1}= \sum_{i= 1}^{n} \lambda_i^{-m}
}
\end{equation}

By (\ref{2.6}) and $\frac{\partial}{\partial t}v^{ij}= -v^{ik}v^{jl}\frac{\partial}{\partial t}v_{kl}$, 
\begin{equation}\label{2.8}
{\frac{\partial}{\partial t}v^{ij}= -v^{ik}v^{jl}[A_{\epsilon}^{\alpha \beta}(v_{kl})_{\alpha \beta}+ D_{kl}(U,V)+ B_{kl}(U,\tilde{\nabla} U)]
}
\end{equation}

By $(61)$ on page $236$ of \cite{Shi}, 
\begin{equation}\label{2.9}
{(v^{ij})_{\alpha \beta}= \tilde{\nabla}_{\alpha}\tilde{\nabla}_{\beta}(v^{ij})= -v^{ik}v^{jl}\tilde{\nabla}_{\alpha}\tilde{\nabla}_{\beta}(v_{kl})- \tilde{\nabla}_{\alpha}(v^{ik}v^{jl})\tilde{\nabla}_{\beta}v_{kl}
}
\end{equation}

By (\ref{2.8}) and (\ref{2.9}), 
\begin{align}
\frac{\partial}{\partial t}v^{ij}&= A_{\epsilon}^{\alpha \beta}(v^{ij})_{\alpha \beta}+ A_{\epsilon}^{\alpha \beta}(v^{ik}v^{jl})_{\alpha} (v_{kl})_{\beta}- v^{ik}v^{jl}[D_{kl}(U,V)+ B_{kl}(U,\tilde{\nabla} U)] \nonumber \\
(\phi_m)_t &= v^{\alpha_1 \beta_1}\cdots \big(\frac{\partial}{\partial t} v^{\alpha_k \beta_k} \big)\cdots \tilde{g}_{\beta_m \alpha_1} \nonumber \\
&= v^{\alpha_1 \beta_1}\cdots \Big[ A_{\epsilon}^{\alpha \beta}(v^{\alpha_k \beta_k})_{\alpha \beta}+ A_{\epsilon}^{\alpha \beta}(v^{\alpha_k p}v^{\beta_k q})_{\alpha} (v_{pq})_{\beta} \nonumber \\
&\quad \quad - v^{\alpha_k p}v^{\beta_k q}[D_{pq}(U,V)+ B_{pq}(U,\tilde{\nabla} U)] \Big]\cdots \tilde{g}_{\beta_m \alpha_1}  \nonumber \\
&= A_{\epsilon}^{\alpha \beta}(\phi_m)_{\alpha \beta}+ 2\lambda_i^{-m}\chi^2 u^{\alpha \beta}R_{i\alpha i\beta}- \lambda_i^{-m-1}B_{ii}-2A_{\epsilon}^{\alpha \beta}\lambda_p^{-m-1}\lambda_q^{-1}(v_{pq})_{\alpha}(v_{pq})_{\beta} \nonumber \\
&\quad -2\sum_{k,l}\sum_{p< q} A_{\epsilon}^{\alpha \beta}(\lambda_k^{-1})^{m+p-q-3}(\lambda_l^{-1})^{q-p-3}(v_{kl})_{\alpha}(v_{kl})_{\beta}
\label{2.10}
\end{align}
and 
\begin{equation}\label{2.11}
{\left.
\begin{array}{rl}
&\quad -2\sum_{k,l}\sum_{p< q} A_{\epsilon}^{\alpha \beta}(\lambda_k^{-1})^{m+p-q-3}(\lambda_l^{-1})^{q-p-3}(v_{kl})_{\alpha}(v_{kl})_{\beta} \\
&= -m\sum_{k,l} A_{\epsilon}^{\alpha \beta}\sum_{\sigma= -2}^{m-4} (\lambda_k^{-1})^{m-\sigma -6}(\lambda_l^{-1})^{\sigma} (v_{kl})_{\alpha}(v_{kl})_{\beta}
\end{array} \right.
}
\end{equation}

By (\ref{2.10}) and (\ref{2.11}), 
\begin{equation}\nonumber
{\left.
\begin{array}{rl}
(\phi_m)_t &= (\chi^2+ \epsilon)(u^{\alpha \beta})(\phi_m)_{\alpha \beta}- 2(\lambda_p^{-1})^{m+1} \lambda_q^{-1} (\chi^2+ \epsilon) u^{\alpha \beta} (v_{pq})_{\alpha} (v_{pq})_{\beta} \\
& - m(\chi^2+ \epsilon)\sum_{\sigma=-2}^{m-4}u^{\alpha \beta}(v_{kl})_{\alpha} (v_{kl})_{\beta}(\lambda_k^{-1})^{m- \sigma- 6}(\lambda_l^{-1})^{\sigma} \\
&+ 2\lambda_i^{-m}\chi^2 u^{\alpha \beta} \tilde{R}_{i \alpha i \beta} - \lambda_i^{-m-1}B_{ii}
\end{array} \right.
}
\end{equation}

then 
\begin{equation}\label{2.12}
{(\phi_m)_t\leq (\chi^2 + \epsilon)u^{\alpha \beta}(\phi_m)_{\alpha \beta}+ C(n)\chi^2 \big|\widetilde{Rm}\big|\phi_m- \lambda_i^{-m-1}B_{ii} 
}
\end{equation}

Because $U\in \mathfrak A_{t_1}\cap \mathbb B_{\tilde{p}+ 2}(G)_{t_1}$, by definition of $B_{ij}$ and (\ref{2.7}), 
\begin{equation}\label{2.13}
{-\lambda_i^{-m-1}B_{ii}\leq C(n, ||\chi ||_1, C_s(\Omega, g_0))\cdot(G+1)^2 \phi_m^{1+\frac{1}{m}}
}
\end{equation}

From (\ref{2.12}), (\ref{2.13}) and Maximum Principle,
\begin{equation}\nonumber
{\big(\max_{x\in \Omega}\phi_m(x,t)\big)_t\leq C(n, ||\chi ||_1, ||\widetilde{Rm}||_0, C_s(\Omega, g_0), G)\big(\max_{x\in \Omega}\phi_m(x,t) + 1 \big)^2
}
\end{equation}
\begin{equation}\nonumber
{\phi_m(x, t)\leq \frac{1}{C_0- Ct}- 1, \quad C_0=\frac{1}{n+1}
}
\end{equation}

Note $\phi_m(0)= 2n$, one can choose $t_{1,1}= t_{1,1}(n, ||\chi ||_1, ||\widetilde{Rm}||_0, C_s(\Omega, g_0), G)> 0$, such that when $t\in [0, t_{1,1}]$, $\phi_m(x, t)\leq 2^{m}$, then 
\begin{equation}\nonumber
{\lambda_i \geq \frac{1}{2},  \quad i= 1, 2,\cdots, n
}
\end{equation}

Now define
\begin{equation}\nonumber
{\psi_m= v_{\alpha_1 \beta_1} \tilde{g}^{\beta_1 \alpha_2}\cdots v_{\alpha_m \beta_m} \tilde{g}^{\beta_m \alpha_1}= \sum_{i= 1}^{n} \lambda_i^{m}
}
\end{equation}
\begin{equation}\label{2.14}
{\left.
\begin{array}{rl}
(\psi_m)_t &= (\chi^2+ \epsilon)u^{\alpha \beta}(\psi_m)_{\alpha \beta}- 2\chi^2\lambda_i^m u^{\alpha \beta}\tilde{R}_{i \alpha i \beta}+ \lambda_i^{m-1}B_{ii}\\
&\quad - 2(\chi^2+ \epsilon)\sum_{j< k}\lambda_q^{m+j-k-1} \lambda_r^{k-j-1}u^{\alpha \beta}(v_{qr})_{\alpha}(v_{qr})_{\beta}\\
&\leq (\chi^2+ \epsilon)u^{\alpha \beta}(\psi_m)_{\alpha \beta}+ C(\psi_m + 1)^2
\end{array} \right.
}
\end{equation}

Similarly, by (\ref{2.14}) and Maximum Principle, 
\begin{equation}\nonumber
{\phi_m (x, t)\leq \frac{1}{C_0- Ct}- 1, \quad C_0=\frac{1}{n+1}
}
\end{equation}

Choose $t_{1,2}= t_{1,2}(n, ||\chi ||_1, ||\widetilde{Rm}||_0, C_s(\Omega, g_0), G)> 0$, such that when $t\in [0, t_{1,2}]$, $\psi_m(x,t)\leq 2^m$, then
\begin{equation}\nonumber
{\lambda_i\leq 2 \quad i= 1, 2,\cdots, n
}
\end{equation}

Set $t_1= \min\{t_{1,1}, t_{1,2}\}$ (note $t_1= t_1(n, ||\chi ||_1, ||\widetilde{Rm}||_0, C_s(\Omega, g_0), G)$) ,then when $t\in [0, t_1]$, $\frac{1}{2}\leq \lambda_i\leq 2$, hence $V\in \mathfrak A_{t_1}$. In fact we proved that $V\in \mathfrak A_{t_1}$ if $U\in \mathfrak A_{t_1}\cap \mathbb B_{\tilde{p}+ 2}(G)_{t_1}$ and $V= \Phi_{\epsilon}(U)$.

By (\ref{2.6}), for $m\geq \tilde{p}+ 2$, 
\begin{equation}\label{2.15}
{\left.
\begin{array}{rl}
\frac{\partial}{\partial t}\big|V \big|_m^2= 2\sum_{|\alpha |\leq m}\int_{\Omega} (\tilde{g}^{-1})^{|\alpha|+2}\tilde{\nabla}^{\alpha}v_{kl}\tilde{\nabla}^{\alpha}\Big(\frac{\partial}{\partial t}v_{ij} \Big)= I_1+ I_2+ I_3+ I_4 
\end{array}\right.
}
\end{equation}

where
\begin{equation}\nonumber
{I_1= 2\sum_{|\alpha |\leq m}\int_{\Omega} (\tilde{g}^{-1})^{|\alpha|+2}\tilde{\nabla}^{\alpha}v_{kl}\tilde{\nabla}^{\alpha}(v_{ij})_{pq} A_{\epsilon}^{pq}
}
\end{equation}
\begin{equation}\nonumber
{I_2= 2\sum_{|\alpha |\leq m}\int_{\Omega} (\tilde{g}^{-1})^{|\alpha|+2}\tilde{\nabla}^{\alpha}v_{kl} \Big[\tilde{\nabla}^{\alpha}(A_{\epsilon}^{pq}(v_{ij})_{pq})- A_{\epsilon}^{pq}\tilde{\nabla}^{\alpha}(v_{ij})_{pq} \Big]
}
\end{equation}
\begin{equation}\nonumber
{I_3= 2\sum_{|\alpha |\leq m}\int_{\Omega} (\tilde{g}^{-1})^{|\alpha|+2}\tilde{\nabla}^{\alpha}v_{kl}\tilde{\nabla}^{\alpha}D_{ij}, \quad I_4= 2\sum_{|\alpha |\leq m}\int_{\Omega} (\tilde{g}^{-1})^{|\alpha +2}\tilde{\nabla}^{\alpha}v_{kl}\tilde{\nabla}^{\alpha}B_{ij} 
}
\end{equation}
\begin{equation}\label{2.16}
{I_1 \leq -2\sum_{|\alpha |\leq m}\int  (\tilde{g}^{-1})^{|\alpha|+2} \Big((\tilde{\nabla}^{\alpha}v_{kl})_{p} \tilde{\nabla}^{\alpha}(v_{ij})_{q} A_{\epsilon}+ (\tilde{\nabla}^{\alpha}v_{kl}) \tilde{\nabla}^{\alpha}(v_{ij})_{q} (A_{\epsilon})_{p}\Big)
}
\end{equation}
\begin{equation}\nonumber
{\leq -\frac{1}{2}\sum_{|\alpha|= m}\int (\chi^2+ \epsilon)\big|\tilde{\nabla}^{\alpha+1}V\big|^2 + C(n, m, G, C_s(\Omega, g_0), ||\chi ||_1)\big| V\big|_m^2
}
\end{equation}
where we used the fact $|\tilde{\nabla} U^{-1}|\leq |U^{-1}|^2|\tilde{\nabla} U|\leq C(G, C_s(\Omega, g_0))$.
\begin{equation}\label{2.17}
{I_2\leq 2\sum_{|\alpha |\leq m}\Big|\tilde{\nabla}^{\alpha}V \Big|_{L^2}\cdot\Big[ C(||\chi ||_1)\Big(\int (\chi^2+ \epsilon)\big|\tilde{\nabla}^{\alpha+ 1}V\big|^2 \Big)^{\frac{1}{2}} 
}
\end{equation}
\begin{equation}\nonumber
{+ \Big|\tilde{\nabla}^{\alpha-2}[\tilde{\nabla}^2 A_{\epsilon}^{pq}(V)_{pq}] \Big|_{L^2} \Big]
}
\end{equation}

By interpolation inequality, it is easy to get
\begin{align}
\Big|\tilde{\nabla}^{\alpha-2}[\tilde{\nabla}^2 A^{pq}(V)_{pq}] \Big|_{L^2} \leq C(||\chi ||_2, m)|V|_m+ C(m, C_s(\Omega, g_0)) \big(|V|_{\tilde{p}+ 2}+ 1 \big) \big(|A_{\epsilon}^{pq}|_{m}+ 1 \big) \nonumber 
\end{align}

From lemma \ref{lem 2.3} and $|U^{-1}|_{L^{\infty}}\leq 2n$
\begin{equation}\nonumber
{\big|U^{-1}\big|_{k+1}\leq C(k, C_s(\Omega, g_0))(\big| U\big|_{\tilde{p}+ 1}+ 1)^{k+ 1}(\big| U\big|_{k+ 1}+ 1) 
}
\end{equation}

Hence
\begin{equation}\label{2.18}
{\Big|\tilde{\nabla}^{\alpha-2}[\tilde{\nabla}^2 A^{pq}(V)_{pq}] \Big|_{L^2} \leq  C(||\chi ||_2, m)\big| V\big|_m+ C(m, C_s(\Omega, g_0))\big( \big| V\big|_{\tilde{p}+ 2}+ 1\big) (\big| U\big|_m + 1)
}
\end{equation}

By (\ref{2.17}) and (\ref{2.18}), 
\begin{equation}\label{2.19}
{I_2\leq \frac{1}{16}\sum_{|\alpha|= m}\int (\chi^2+ \epsilon)\Big|\tilde{\nabla}^{\alpha+ 1} V\Big|^2 + C(||\chi ||_1)\big|V \big|_m^2 
}
\end{equation}
\begin{equation}\nonumber
{\quad + C(m, ||\chi ||_2, C_s(\Omega, g_0))\big| V\big|_m \big( \big| V\big|_{\tilde{p}+ 2}+ 1\big)(\big| U\big|_{m}+ 1)
}
\end{equation}

Similar with estimate of $I_2$,
\begin{equation}\label{2.20}
{I_3\leq C(m, ||\chi ||_m, ||\widetilde{Rm}||_m, C_s(\Omega, g_0))\Big( |V|_m^2+ |V|_m\big( \big| V\big|_{\tilde{p}}+ 1\big)(|U|_m+ 1) \Big)
}
\end{equation}

\begin{equation}\label{2.21}
{I_4= I_5+ I_6
}
\end{equation}
where 
\begin{align}
I_5&= 2\sum_{|\alpha |\leq m} \int (\tilde{g}^{-1})^{|\alpha|+ 2} \tilde{\nabla}^{\alpha}v_{kl} \tilde{\nabla}^{\alpha}E_{ij} \nonumber \\
I_6&= 2\sum_{|\alpha |\leq m} \int (\tilde{g}^{-1})^{|\alpha|+ 2} \tilde{\nabla}^{\alpha}v_{kl} \tilde{\nabla}^{\alpha} F_{ij} \nonumber 
\end{align}
for $I_5$, we only estimate one term of it as the following, the rest are similar.
\begin{align}
& 2\sum_{|\alpha |\leq m} \int (\tilde{g}^{-1})^{|\alpha|+ 2}\tilde{\nabla}^{\alpha}v_{kl} \tilde{\nabla}^{\alpha}[\frac{1}{2}\chi^2 u^{rs}u^{pq}(u_{rp})_i(u_{sq})_j] \nonumber \\
&\leq C(n)\sum_{\alpha} \Big[ |\tilde{\nabla}^{\alpha}V|_{L^2} \cdot |\tilde{\nabla}^{\alpha- 1}[\tilde{\nabla}(\chi^2 U^{-2})\cdot (\tilde{\nabla} U)^2]|_{L^2} \Big]+ C(n)\sum_{\alpha}\int \tilde{\nabla}^{\alpha}V\cdot (\chi^2 U^{-2})\tilde{\nabla}^{\alpha}(\tilde{\nabla} U\tilde{\nabla} U)  \nonumber \\
&\leq C(n, m, ||\chi ||_m, C_s(\Omega, g_0))|V|_m\cdot \Big[ |U^{-2}|_m |\big( (\tilde{\nabla} U)^2|_{\tilde{p}}+ 1\big)+ \big( |U^{-2}|_{\tilde{p}+ 1}|+ 1\big) (\tilde{\nabla} U)^2|_{m-1} \Big] \nonumber   \\
&+ C(n)\sum_{\alpha}\int \big| \tilde{\nabla}^{\alpha +1}V\cdot (\chi^2 U^{-2})\tilde{\nabla}^{\alpha -1}(\tilde{\nabla} U\tilde{\nabla} U) \big| + C(n)\sum_{\alpha}\int \big| \tilde{\nabla}^{\alpha}V \cdot \tilde{\nabla}(\chi^2 U^{-2})\tilde{\nabla}^{\alpha -1}(\tilde{\nabla} U\tilde{\nabla} U) \big| \nonumber \\
&\leq \frac{1}{64}\sum_{|\alpha|= m}\int \chi^2 |\tilde{\nabla}^{\alpha +1}V|^2 + C(n, m, ||\chi ||_m, C_s(\Omega, g_0), G) \cdot \Big( |V|_m(|U|_m+ 1)+ |U|_m^2 + |V|_m |U|_m \Big) \nonumber 
\end{align}

Hence
\begin{equation}\nonumber
{I_5\leq \frac{9}{64}\sum_{|\alpha|= m}\int \chi^2 |\tilde{\nabla}^{\alpha +1}V|^2 + C(n, m, ||\chi ||_m, C_s(\Omega, g_0), G) \Big( |V|_m(|U|_m+ 1)+ |U|_m^2 \Big)
}
\end{equation}

for $I_6$, we only estimate one term of it as the following, the rest are similar.
\begin{align}
&2\sum_{|\alpha |\leq m} \int (\tilde{g}^{-1})^{|\alpha|+ 2}\tilde{\nabla}^{\alpha}v_{kl} \tilde{\nabla}^{\alpha}[\chi\chi_i u^{pq}(u_{jq})_p]  \nonumber \\
&\leq C(n)\sum_{\alpha} \int \tilde{\nabla}^{\alpha}V \cdot \tilde{\nabla}^{\alpha- 1}[\tilde{\nabla}(\chi\tilde{\nabla}\chi U^{-1})\cdot (\tilde{\nabla} U)] + 2\sum_{\alpha}\int \tilde{\nabla}^{\alpha}V\cdot (\chi\tilde{\nabla}\chi U^{-1})\tilde{\nabla}^{\alpha+ 1}U \nonumber \\
&\leq \frac{1}{64}\sum_{|\alpha|= m}\int \chi^2 |\tilde{\nabla}^{\alpha +1}V|^2+ C(n, m, G, ||\chi ||_m, C_s(\Omega, g_0)) \cdot \Big( |V|_m(|U|_m+ 1)+  |U|_m^2 + |V|_m |U|_m \Big) \nonumber 
\end{align}

Hence
\begin{equation}\nonumber
{I_6\leq \frac{6}{64}\sum_{|\alpha|= m}\int \chi^2 |\tilde{\nabla}^{\alpha +1}V|^2+ C(n, m, G, ||\chi ||_m, C_s(\Omega, g_0)) \Big( |U|_m^2 + |V|_m (|U|_m+ 1) \Big)
}
\end{equation}

Then we get 
\begin{equation}\label{2.22}
{I_4\leq \frac{15}{64}\sum_{|\alpha|= m}\int \chi^2 |\tilde{\nabla}^{\alpha +1}V|^2
+ C(n, m, G, ||\chi ||_m, C_s(\Omega, g_0)) \Big( |U|_m^2 + |V|_m (|U|_m+ 1) \Big)
}
\end{equation}

By (\ref{2.15}), (\ref{2.16}), (\ref{2.19}), (\ref{2.20}) and (\ref{2.22}), and note the sum of the coefficients of $\sum_{|\alpha|\leq m}\int \chi^2 |\tilde{\nabla}^{\alpha+ 1}V|^2$ is negative on the right side, hence
\begin{equation}\label{2.23}
{\frac{\partial}{\partial t}|V|_m^2 \leq  C\Big( |V|_m^2+ |V|_m(|V|_{\tilde{p}+ 2}+ 1)(|U|_m+ 1)+ |U|_m^2 \Big)
}
\end{equation}
where $C= C(n, m, G, C_s(\Omega, g_0), ||\chi ||_m, ||\widetilde{Rm}||_m)$. For simplicity, we use $C$ instead of $C(n, m, G, C_s(\Omega, g_0), ||\chi ||_m, ||\widetilde{Rm}||_m)$ in the rest of the proof. Let $\phi(t)= |V|_m^2(t)$, then
\begin{equation}\nonumber
{\phi_t\leq C\phi+ C\phi^{\frac{1}{2}}Q_1+ Q_2
}
\end{equation}
where $Q_1= (|V|_{\tilde{p}+2}+1)(|U|_m+ 1)$ and $Q_2= C|U|_m^2$. We can get
\begin{equation}\label{2.24}
{\phi_t\leq C(\phi +1)(Q_1^2 + Q_2 +1)\leq CQ_4(\phi +1)
}
\end{equation}
where
\begin{equation}\label{2.25}
{Q_4= (|V|_{\tilde{p}+ 2}^2 +1)(|U|_m +1)^2
}
\end{equation}

Choose $m= \tilde{p}+ 2$, $\phi(t)= |V|_{\tilde{p}+ 2}^2(t)$, define $\psi(t)= |U|_{\tilde{p}+ 2}(t)$, then $Q_4= (\phi+1)(\psi+1)^2$,
\begin{equation}\nonumber
{\phi_t\leq C(\phi+ 1)^2(\psi +1)^2\leq C(G+1)^2(\phi+ 1)^2= C(\phi+ 1)^2
}
\end{equation}
where $\psi\leq G$ because of $U\in \mathbb B_{\tilde{p}+ 2}(G)_{t_1}$ and 
\[C= C(n,\tilde{p}, G, \Vert\chi\Vert_{\tilde{p}+ 2} \Vert\widetilde{Rm}\Vert_{\tilde{p}+ 2}, C_s(\Omega, g_0))\]
\begin{equation}\nonumber
{\phi(t)\leq \frac{1}{C_0- Ct}-1 ,\quad C_0= (\phi(0)+ 1)^{-1}
}
\end{equation}

Note $\phi(0)= (\frac{G}{2})^2$, then there exists $t_2> 0$, such that if $t\in [0, t_2]$, $\phi(t)\leq G^2$, that is 
\begin{equation}\nonumber
{|V|_{\tilde{p}+ 2}\leq G
}
\end{equation}

Choose 
\begin{equation}\label{2.26}
{t_3= \min\{t_1, t_2\}= t_3(n,\tilde{p}, G, \Vert\chi\Vert_{\tilde{p}+ 2}, \Vert\widetilde{Rm}\Vert_{\tilde{p}+ 2}, C_s(\Omega, g_0))
}
\end{equation}

We get the apriori estimate that $V\in \mathfrak A_{t_3}\cap \mathbb B_{\tilde{p}+ 2}(G)_{t_3}$ if $U\in \mathfrak A_{t_3}\cap \mathbb B_{\tilde{p}+ 2}(G)_{t_3}$ and $V= \Phi_{\epsilon}(U)$. By Theorem $8.3$ in \cite{Lieb} (which is for parabolic equations, but similar results apply for parabolic system here), in fact we get the solution of (\ref{2.5}) on $\Omega\times [0, t_3]$ and 
\[\Phi_{\epsilon}:\mathfrak A_{t_3}\cap \mathbb B_{\tilde{p}+ 2}(G)_{t_3}\longrightarrow \mathfrak A_{t_3}\cap \mathbb B_{\tilde{p}+ 2}(G)_{t_3} \] 
}
\qed

\begin{remark}
{The proof of Proposition \ref{prop 2.4} (especially the energy estimate) has the same spirit as in \cite{KL}, but the tensor version makes our estimate more subtle. Also note that $t_3$ is independent of $\epsilon$.
}
\end{remark}

\begin{prop}\label{prop 2.5}
{For any $m\geq \tilde{p}+ 2$, there exists 
\[t_5= t_5(n, m, G, ||\chi ||_m, ||\widetilde{Rm}||_m, C_s(\Omega, g_0))> 0\]
such that 
\[\Phi_{\epsilon}: \mathfrak A_{t_5}\cap \mathbb B_m(G)_{t_5}\longrightarrow \mathfrak A_{t_5}\cap \mathbb B_m(G)_{t_5}\]
}
\end{prop}

\pf
{Choose $U\in \mathfrak A_{T} \cap \mathbb B_m(G)_{T}$, where $T\leq t_3$ is to be determined later. By Proposition \ref{prop 2.4}, we get the solution $V= \Phi_{\epsilon}(U)\in \mathfrak A_{T}\cap \mathbb B_{\tilde{p}+ 2}(G)_{T}$, so $|V|_{\tilde{p}+ 2}\leq G$. And we also have $|U|_m\leq G$, by (\ref{2.24}) and (\ref{2.25}) we get
\begin{equation}\nonumber
{\phi_t\leq C(G^2+ 1)(G +1)^2(\phi +1)\leq C(\phi +1)
}
\end{equation}
where $\phi(t)= |V|_m^2(t)$ and $C= C(n,m,G,\Vert\chi\Vert_m, \Vert\widetilde{Rm}\Vert_m, C_s(\Omega, g_0))$.
\begin{equation}\nonumber
{\phi(t)\leq e^{Ct}[\phi(0)+ 1]- 1
}
\end{equation}
Note $\phi(0)= (\frac{G}{2})^2$. Choose $t_4= t_4(n,m,G,\Vert\chi\Vert_m, \Vert\widetilde{Rm}\Vert_m, C_s(\Omega, g_0))> 0$ such that $e^{Ct_4}[\phi(0)+ 1]- 1= G^2$, let $t_5= \min{t_3, t_4}= t_5(n,m,G,\Vert\chi\Vert_m, \Vert\widetilde{Rm}\Vert_m, C_s(\Omega, g_0))$
If $t\in [0, t_5]$, 
\begin{equation}\nonumber
{\phi(t)\leq e^{C t_5}[(\frac{G}{2})^2+ 1] - 1\leq G^2
}
\end{equation}
then $|V|_m\leq G$. Hence $V\in \mathfrak A_{t_5}\cap \mathbb B_m(G)_{t_5}$.
}
\qed

Next we show that $\Phi_{\epsilon}$ is a contraction map.

\begin{prop}\label{prop 2.6}
{For $0< \delta< 1$, there exists 
\[t_7= t_7(n, \tilde{p}, G, ||\chi ||_{\tilde{p}+ 2}, ||\widetilde{Rm}||_{\tilde{p}+ 2}, C_s(\Omega, g_0), \delta)> 0\] such that 
\begin{equation}\nonumber
{|\Phi_{\epsilon}(U)-\Phi_{\epsilon}(\tilde{U})|_{t_7,\tilde{p}}\leq \delta |U- \tilde{U}|_{t_7, \tilde{p}}
}
\end{equation}
where $U, \tilde{U}\in \mathfrak A_{t_7}\cap \mathbb B_{\tilde{p}+ 2}(G)_{t_7}$.
}
\end{prop}

\pf
{By (\ref{2.6}) we have the following: 
\begin{equation}\nonumber
\left\{
\begin{array}{rl}
V_t&= (\chi^2 + \epsilon)u^{\alpha \beta}(V)_{\alpha \beta} + P(U)V+ B(U, \tilde{\nabla}U)\\
\tilde{V}_t&= (\chi^2 + \epsilon)\tilde{u}^{\alpha \beta}(\tilde{V})_{\alpha \beta} + P(\tilde{U})\tilde{V}+ B(\tilde{U}, \tilde{\nabla}\tilde{U})
\end{array} \right.
\end{equation}

where
\begin{equation}\nonumber
{V= \Phi_{\epsilon}(U) ;\ (P(U)V)_{ij}= D_{ij}(U,V) ;\ (B(U, \tilde{\nabla}U))_{ij}= B_{ij}(U, \tilde{\nabla}U) ;\ (V)_{\alpha \beta}= \tilde{\nabla}_{\alpha}\tilde{\nabla}_{\beta} V
}
\end{equation}
similar notation for $\tilde{V}$ etc. By Proposition \ref{prop 2.4}, if $T\leq t_3$, where $t_3$ is defined in $(\ref{2.26})$, then $V$, $\tilde{V}\in \mathfrak A_{T}\cap \mathbb B_{\tilde{p}+ 2}(G)_{T}$.

Define $W\triangleq V- \tilde{V}= \Phi_{\epsilon}(U)- \Phi_{\epsilon}(\tilde{U})$, then 
\begin{equation}\nonumber
{W_t= (\chi^2+ \epsilon)u^{\alpha \beta}w_{\alpha \beta}+ P(U)W+ H
}
\end{equation}

where 
\begin{equation}\nonumber
{H= (\chi^2+ \epsilon) (u^{\alpha \beta}- \tilde{u}^{\alpha \beta})(\tilde{V})_{\alpha \beta} + (P(U)- P(\tilde{U}))\tilde{V} + \Big(B(U, \tilde{\nabla}U)- B(\tilde{U}, \tilde{\nabla}\tilde{U}) \Big)
}
\end{equation}

We consider the evolution equation of $|W|_{\tilde{p}}^2$:
\begin{equation}\label{2.27}
{\frac{\partial}{\partial t} |W|_{\tilde{p}}^2= J_1+ J_2+ J_3+ J_4
}
\end{equation}

where 
\begin{equation}\nonumber
{J_1= 2\sum_{|\alpha|\leq \tilde{p}}\int (\tilde{g}^{-1})^{|\alpha|+ 2}\tilde{\nabla}^{\alpha} w_{kl} \tilde{\nabla}^{\alpha} (w_{ij})_{pq}A_{\epsilon}^{pq}
}
\end{equation}
\begin{equation}\nonumber
{J_2= 2\sum \int (\tilde{g}^{-1})^{|\alpha|+ 2}\tilde{\nabla}^{\alpha} w_{kl}[\tilde{\nabla}^{\alpha}(A_{\epsilon}^{pq}(w_{ij})_{pq})- A_{\epsilon}^{pq}\tilde{\nabla}^{\alpha}(w_{ij})_{pq}]
}
\end{equation}
\begin{equation}\nonumber
{J_3= 2\sum_{|\alpha|\leq \tilde{p}}\int (\tilde{g}^{-1})^{|\alpha|+ 2}\tilde{\nabla}^{\alpha} w_{kl} \tilde{\nabla}^{\alpha} (P(U)W)_{ij}, \quad J_4= 2\sum_{|\alpha|\leq \tilde{p}}\int (\tilde{g}^{-1})^{|\alpha|+ 2}\tilde{\nabla}^{\alpha} w_{kl} \tilde{\nabla}^{\alpha} H_{ij}
}
\end{equation}

Recall $A_{\epsilon}^{pq}= (\chi^2+ \epsilon)u^{pq}$. Similar with (\ref{2.16}), we have
\begin{equation}\label{2.28}
{J_1\leq -\frac{1}{2}\sum_{|\alpha|= \tilde{p}} \int (\chi^2+ \epsilon)|\tilde{\nabla}^{\alpha+ 1} W|^2+ C(n, \tilde{p}, G, C_s(\Omega, g_0), ||\chi ||_1)|W|_{\tilde{p}}^2
}
\end{equation}

Similar with (\ref{2.17}), we get
\begin{equation}\nonumber
{J_2\leq 2\sum_{|\alpha|\leq \tilde{p}}|\tilde{\nabla}^{\alpha}W|_{L^2}\cdot [C(\int (\chi^2 + \epsilon)|\tilde{\nabla}^{\alpha+1}W|^2)^{\frac{1}{2}}+ |\tilde{\nabla}^{\alpha- 2}[\nabla^2 A_{\epsilon}^{pq}(W)_{pq}]|_{L^2}]
}
\end{equation}

and we have
\begin{equation}\nonumber
{|\tilde{\nabla}^{\alpha- 2}[\tilde{\nabla}^2 A_{\epsilon}^{pq}(W)_{pq}]|_{L^2} \leq C(\tilde{p})\sum_{\beta+ \gamma= \alpha}|\tilde{\nabla}^{\beta}(\tilde{\nabla}^2 A^{pq})\tilde{\nabla}^{\gamma}W|_{L^2}
}
\end{equation}
\begin{equation}\nonumber
{\leq C(n,\tilde{p},C_s(\Omega, g_0))\big[ (|\tilde{\nabla}^2 A_{\epsilon}^{pq}|_{L^{\infty}}+ 1) |\tilde{\nabla}^{\alpha}W|_{L^2}+ |W|_{L^{\infty}} ( |\tilde{\nabla}^{\alpha}(\tilde{\nabla}^2 A_{\epsilon}^{pq})|_{L^2} + 1) \big] 
}
\end{equation}
\begin{equation}\nonumber
{\leq C(n,\tilde{p},C_s(\Omega, g_0), ||\chi ||_2)[(|U|_{\tilde{p}+ 2}+ 1)|W|_{\tilde{p}}+ |W|_{\tilde{p}}(|U|_{\tilde{p}+ 2}+ 1)]
}
\end{equation}
\begin{equation}\nonumber
{\leq C(n,\tilde{p},C_s(\Omega, g_0), ||\chi ||_2, G)|W|_{\tilde{p}}
}
\end{equation}

Hence
\begin{equation}\label{2.29}
{J_2\leq \frac{1}{16}\sum_{|\alpha|= \tilde{p}}\int (\chi^2 + \epsilon)|\tilde{\nabla}^{\alpha+ 1}W|^2 + C(n,\tilde{p},C_s(\Omega, g_0), ||\chi ||_2, G)|W|_{\tilde{p}}^2
}
\end{equation}

Similar to (\ref{2.20}), we get 
\begin{equation}\label{2.30}
{J_3\leq C(\tilde{p},||\chi ||_{\tilde{p}},||\widetilde{Rm} ||_{\tilde{p}},C_s(\Omega, g_0),G) |W|_{\tilde{p}}^2
}
\end{equation}

In the rest of the proof, we use $C$ instead of $C(n,\tilde{p},||\chi ||_{\tilde{p}},||\widetilde{Rm} ||_{\tilde{p}},C_s(\Omega, g_0),G)$ for simplicity.
\begin{equation}\label{2.31}
{J_4= J_5+ J_6+ J_7
}
\end{equation}
where 
\begin{equation}\nonumber
{J_5= 2\sum_{\alpha}\int (\tilde{g}^{-1})^{\alpha+ 2}\tilde{\nabla}^{\alpha}W\cdot \tilde{\nabla}^{\alpha}[(\chi^2+ \epsilon)(U^{-1}- \tilde{U}^{-1})\tilde{\nabla}^2 \tilde{V}]
}
\end{equation}
\begin{equation}\nonumber
{J_6= 2\sum_{\alpha}\int (\tilde{g}^{-1})^{\alpha+ 2}\tilde{\nabla}^{\alpha}W\cdot \tilde{\nabla}^{\alpha}[(P(U)- P(\tilde{U}))\tilde{V}]
}
\end{equation}
\begin{equation}\nonumber
{J_7= 2\sum_{\alpha}\int (\tilde{g}^{-1})^{\alpha+ 2}\tilde{\nabla}^{\alpha}W\cdot \tilde{\nabla}^{\alpha} [B(U,\tilde{\nabla}U)- B(\tilde{U}- \tilde{\nabla}\tilde{U})]
}
\end{equation}
\begin{equation}\label{2.32}
\left.
\begin{array}{rl}
J_5 &\leq C|W|_{\tilde{p}} \Big(\sum_{\alpha}|\tilde{\nabla}^{\alpha}[(\chi^2+ \epsilon)\tilde{\nabla}^2 \tilde{V}\cdot (U^{-1}- \tilde{U}^{-1})]|_{L^2} \Big) \\
&\leq C|W|_{\tilde{p}} ( |\tilde{V}|_{\tilde{p}+ 2}+ 1) |U^{-1}- \tilde{U}^{-1}|_{\tilde{p}}\leq C|W|_{\tilde{p}}|\tilde{U}- U|_{\tilde{p}}
\end{array} \right.
\end{equation}

It is easy to get
\begin{equation}\label{2.33}
{\left.
\begin{array}{rl}
J_6 \leq C|W|_{\tilde{p}} \big[|P(U)- P(\tilde{U})|_{\tilde{p}} (|\tilde{V}|_{\tilde{p}}+ 1) \big] \leq C |W|_{\tilde{p}} |\tilde{U}- U|_{\tilde{p}}
\end{array} \right.
}
\end{equation}

We only need to estimate two terms of $J_7$ like the following $J_8$ and $J_9$, the others are similar.
\begin{equation}\nonumber
{J_8= \sum_{\alpha}\int (\tilde{g}^{-1})^{|\alpha|+ 2}\cdot \tilde{\nabla}^{\alpha}W\cdot \tilde{\nabla}^{\alpha}
[\chi^2(U^{-2}(\tilde{\nabla} U)^2- \tilde{U}^{-2}(\tilde{\nabla}\tilde{U})^2)]
}
\end{equation}
\begin{equation}\nonumber
{J_9= \sum_{\alpha}\int (\tilde{g}^{-1})^{|\alpha|+ 2}\cdot \tilde{\nabla}^{\alpha}W\cdot \tilde{\nabla}^{\alpha}
[\chi \tilde{\nabla}\chi (U^{-1}\tilde{\nabla} U- \tilde{U}^{-1}\tilde{\nabla}\tilde{U})]
}
\end{equation}

We firstly estimate $J_8$:
\begin{equation}\label{2.34}
{J_8\leq J_{10}+ J_{11}+ J_{12}+ J_{13}
}
\end{equation}

where
\begin{equation}\nonumber
{J_{10}= C\sum_{\alpha}\int (\tilde{g}^{-1})^{|\alpha|+ 2}\cdot \tilde{\nabla}^{\alpha}W\cdot \tilde{\nabla}^{\alpha}
[(U^{-1}- \tilde{U}^{-1}) U^{-1} (\tilde{\nabla}U)^2 \chi^2]
}
\end{equation}
\begin{equation}\nonumber
{J_{11}= C\sum_{\alpha}\int (\tilde{g}^{-1})^{|\alpha|+ 2}\cdot \tilde{\nabla}^{\alpha}W\cdot \tilde{\nabla}^{\alpha}
[(U^{-1}- \tilde{U}^{-1}) \tilde{U}^{-1} (\tilde{\nabla}U)^2 \chi^2]
}
\end{equation}
\begin{equation}\nonumber
{J_{12}= C\sum_{\alpha}\int (\tilde{g}^{-1})^{|\alpha|+ 2}\cdot \tilde{\nabla}^{\alpha}W\cdot \tilde{\nabla}^{\alpha}
[(\tilde{U}^{-2}\tilde{\nabla}U\chi^2) \tilde{\nabla}(U- \tilde{U})]
}
\end{equation}
\begin{equation}\nonumber
{J_{13}= C\sum_{\alpha}\int (\tilde{g}^{-1})^{|\alpha|+ 2}\cdot \tilde{\nabla}^{\alpha}W\cdot \tilde{\nabla}^{\alpha}
[(\tilde{U}^{-2}\tilde{\nabla}\tilde{U}\chi^2) \tilde{\nabla}(U- \tilde{U})]
}
\end{equation}

It is easy to get
\begin{equation}\label{2.35}
{J_{10}\leq C|W|_{\tilde{p}} \Big[|U^{-1}- \tilde{U}^{-1}|_{\tilde{p}} (|U^{-1}(\nabla U)^2|_{\tilde{p}}+ 1) \Big]
\leq C|W|_{\tilde{p}} |\tilde{U}- U|_{\tilde{p}} 
}
\end{equation}

Similarly, we get
\begin{equation}\label{2.36}
{J_{11}\leq C|W|_{\tilde{p}} |\tilde{U}- U|_{\tilde{p}} 
}
\end{equation}

We have $J_{12}\leq J_{12.1}+ J_{12.2}$, where
\begin{align}
J_{12.1}= C\sum_{\alpha}\int (\tilde{g}^{-1})^{|\alpha|+ 2}\cdot \tilde{\nabla}^{\alpha}W\cdot \tilde{\nabla}^{\alpha -1} [\tilde{\nabla}(\tilde{U}^{-2}\tilde{\nabla}U\chi^2) \tilde{\nabla}(U- \tilde{U})] \nonumber\\
J_{12.2}= C\sum_{\alpha}\int (\tilde{g}^{-1})^{|\alpha|+ 2}\cdot \tilde{\nabla}^{\alpha}W \cdot (\tilde{U}^{-2}\tilde{\nabla} U\chi^2) \cdot \tilde{\nabla}^{\alpha+ 1}(U- \tilde{U})  \nonumber
\end{align}
\begin{equation}\label{2.37}
{J_{12.1}\leq C|W|_{\tilde{p}} \sum_{\alpha}\big|\tilde{\nabla}^{\alpha- 1}[f\tilde{\nabla}g]\big|_{L^2} 
}
\end{equation}
where $f=\tilde{\nabla}(\tilde{U}^{-2} \chi^2 \tilde{\nabla}U)$, $g= U- \tilde{U}$, and note $|f|_{\tilde{p}}\leq C(|u|_{\tilde{p}+ 2}+ |\tilde{u}|_{\tilde{p}+ 2}+ 1)\leq C$.

Then
\begin{equation}\label{2.38}
\left.
\begin{array}{rl}
|\tilde{\nabla}^{\alpha- 1}[f\tilde{\nabla}g]|_{L^2} &\leq C\sum_{\beta+ \gamma= \alpha} |\tilde{\nabla}^{\beta}f \tilde{\nabla}^{\gamma}g|_{L^2} \\
&\leq C\big( (|f|_{L^{\infty}}+ 1) |\tilde{\nabla}^{\alpha} g|_{L^2}+ |g|_{L^{\infty}} (|\tilde{\nabla}^{\alpha}f|_{L^2}+ 1) \big)\\
&\leq C|g|_{\tilde{p}}= C|U- \tilde{U}|_{\tilde{p}}
\end{array} \right.
\end{equation}

With (\ref{2.37}) and (\ref{2.38}), we get
\begin{equation}\label{2.39}
{J_{12.1}\leq C|W|_{\tilde{p}} |U- \tilde{U}|_{\tilde{p}}
}
\end{equation}

It is easy to get
\begin{equation}\label{2.40}
\left.
\begin{array}{rl}
J_{12.2}\leq \frac{1}{1000} \sum_{|\alpha|= \tilde{p}}\int |\tilde{\nabla}^{\alpha +1}W|^2\chi^2 + C|U- \tilde{U}|_{\tilde{p}}^2 + C|W|_{\tilde{p}}|U- \tilde{U}|_{\tilde{p}}
\end{array} \right.
\end{equation}

by (\ref{2.39}) and (\ref{2.40}),
\begin{equation}\label{2.41}
{J_{12}\leq \frac{1}{1000} \sum_{|\alpha|= \tilde{p}}\int |\tilde{\nabla}^{\alpha +1}W|^2\chi^2 + C|U- \tilde{U}|_{\tilde{p}}^2 + C|W|_{\tilde{p}}|U- \tilde{U}|_{\tilde{p}}
}
\end{equation}

Similarly, 
\begin{equation}\label{2.42}
{J_{13}\leq \frac{1}{1000} \sum_{|\alpha|= \tilde{p}}\int |\tilde{\nabla}^{\alpha +1}W|^2\chi^2 + C|U- \tilde{U}|_{\tilde{p}}^2 + C|W|_{\tilde{p}}|U- \tilde{U}|_{\tilde{p}}
}
\end{equation}

By (\ref{2.35}), (\ref{2.36}), (\ref{2.41}), (\ref{2.42}) and (\ref{2.34}), 
\begin{equation}\label{2.43}
{J_8\leq \frac{1}{500} \sum_{|\alpha|= \tilde{p}}\int |\tilde{\nabla}^{\alpha +1}W|^2\chi^2 + C|U- \tilde{U}|_{\tilde{p}}^2 + C|W|_{\tilde{p}}|U- \tilde{U}|_{\tilde{p}}
}
\end{equation}
\begin{equation}\label{2.44}
{J_9\leq J_{14}+ J_{15}
}
\end{equation}
where
\begin{equation}\nonumber
{J_{14}= \sum_{\alpha}\int (\tilde{g}^{-1})^{|\alpha|+ 2}\cdot \tilde{\nabla}^{\alpha}W\cdot \tilde{\nabla}^{\alpha}
[(\chi \tilde{\nabla}\chi \tilde{\nabla}U) (U^{-1}- \tilde{U}^{-1})]
}
\end{equation}
\begin{equation}\nonumber
{J_{15}= \sum_{\alpha}\int (\tilde{g}^{-1})^{|\alpha|+ 2}\cdot \tilde{\nabla}^{\alpha}W\cdot \tilde{\nabla}^{\alpha}
[(\chi \tilde{\nabla}\chi \tilde{U}^{-1}) \tilde{\nabla}(U- \tilde{U})]
}
\end{equation}

It is easy to get
\begin{equation}\label{2.45}
{J_{14}\leq C|W|_{\tilde{p}}|U- \tilde{U}|_{\tilde{p}}
}
\end{equation}

Similar with the argument for $J_{12}$, we can get
\begin{equation}\label{2.46}
{J_{15}\leq \frac{1}{1000} \sum_{|\alpha|= \tilde{p}}\int |\tilde{\nabla}^{\alpha +1}W|^2\chi^2 + C|U- \tilde{U}|_{\tilde{p}}^2 + C|W|_{\tilde{p}}|U- \tilde{U}|_{\tilde{p}}
}
\end{equation}

By (\ref{2.45}), (\ref{2.46}) and (\ref{2.44}), we get 
\begin{equation}\label{2.47}
{J_9\leq \frac{1}{1000} \sum_{|\alpha|= \tilde{p}}\int |\tilde{\nabla}^{\alpha +1}W|^2\chi^2 + C|U- \tilde{U}|_{\tilde{p}}^2 + C|W|_{\tilde{p}}|U- \tilde{U}|_{\tilde{p}}
}
\end{equation}

By (\ref{2.43}) and (\ref{2.47}), we get  
\begin{equation}\label{2.48}
{J_7\leq \Big( \frac{9}{500} +\frac{6}{1000} \Big) \sum_{|\alpha|= \tilde{p}}\int |\tilde{\nabla}^{\alpha +1}W|^2\chi^2 + C|U- \tilde{U}|_{\tilde{p}}^2 + C|W|_{\tilde{p}}|U- \tilde{U}|_{\tilde{p}}
}
\end{equation}

By (\ref{2.32}), (\ref{2.33}), (\ref{2.48}) and (\ref{2.31}), we get 
\begin{equation}\label{2.49}
{J_4\leq \frac{3}{100}  \sum_{|\alpha|= \tilde{p}}\int |\tilde{\nabla}^{\alpha +1}W|^2\chi^2 + C|U- \tilde{U}|_{\tilde{p}}^2 + C|W|_{\tilde{p}}|U- \tilde{U}|_{\tilde{p}}
}
\end{equation}

By (\ref{2.28}), (\ref{2.29}), (\ref{2.30}), (\ref{2.49}), and (\ref{2.27}), we get  
\begin{equation}\label{2.50}
{\frac{\partial}{\partial t}(W|_{\tilde{p}}^2) \leq C_1 |W|_{\tilde{p}}^2+ C_2 |U- \tilde{U}|_{\tilde{p}}^2
}
\end{equation}
where $C_1= C_1(n,\tilde{p}, G,\Vert\chi\Vert_{\tilde{p}},\Vert\widetilde{Rm}\Vert_{\tilde{p}}, C_s(\Omega, g_0))> 0$ and $C_2$ depends on the same parameter as $C_1$. 

By Gronwall's inequality and (\ref{2.50}), note $|W(x,0)|_{\tilde{p}}= 0$, we get
\begin{equation}\label{2.51}
{|W|_{\tilde{p}}^2(t)\leq e^{C_1 t}\cdot \int_0^t C_2|U- \tilde{U}|_{\tilde{p}}^2(s) ds\leq C_2 t e^{C_1 t}|U- \tilde{U}|_{t, \tilde{p}}^2 
}
\end{equation}

We can choose $t_6= t_6(\delta, C_1, C_2)= t_6(n, \tilde{p}, G, ||\chi ||_{\tilde{p}}, ||\widetilde{Rm}||_{\tilde{p}}, C_s(\Omega, g_0), \delta)> 0$, such that $C_2\cdot t_6\cdot e^{C_1 t_6}= \delta^2< 1$, then we choose 
\begin{equation}\label{2.52}
{t_7= \min\{t_3, t_6 \}= t_7(n, \tilde{p}, G, ||\chi ||_{\tilde{p}+ 2}, ||\widetilde{Rm}||_{\tilde{p}+ 2}, C_s(\Omega , g_0), \delta)
}
\end{equation}
If $t\in [0, t_7]$, we get $|W|_{\tilde{p}}^2(t)\leq \delta^2 |U- \tilde{U}|_{t_7, \tilde{p}}^2$. Hence
\begin{equation}\nonumber
{|W|_{t_7, \tilde{p}}\leq \delta |U- \tilde{U}|_{t_7, \tilde{p}}
}
\end{equation}

That is 
\begin{equation}\nonumber
{|\Phi_{\epsilon}(U)-\Phi_{\epsilon}(\tilde{U})|_{t_7,\tilde{p}}\leq \delta |U- \tilde{U}|_{t_7, \tilde{p}}
}
\end{equation}

}
\qed

\begin{theorem}\label{thm 2.7}
{There exists $T= T(n, \tilde{p}, G, ||\chi ||_{2\tilde{p}+ 2}, ||\widetilde{Rm}||_{2\tilde{p}+ 2}, C_s(\Omega, g_0))>0$, such that on $\Omega\times [0, T]$, degenerate parabolic system (\ref{2.4}) has a smooth solution.
}
\end{theorem}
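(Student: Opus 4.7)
The plan is a standard $\epsilon$-regularization of the degenerate system (\ref{2.5}). The auxiliary system (\ref{2.6}), whose top-order coefficient $A^{\alpha\beta}_\epsilon = (\chi^2+\epsilon)u^{\alpha\beta}$ is uniformly elliptic for each $\epsilon>0$, defines the map $\Phi_\epsilon$. I will produce a fixed point $g^\epsilon$ of $\Phi_\epsilon$ via the Banach contraction theorem, observe that $g^\epsilon$ solves the $\epsilon$-regularization of (\ref{2.5}), and pass to the limit $\epsilon \to 0^+$ using uniform higher-order Sobolev estimates.

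For the first step, fix $\epsilon\in(0,1]$. Combine Propositions \ref{prop 2.4}, \ref{prop 2.5}, \ref{prop 2.6} to select a common $T>0$ on which $\Phi_\epsilon$ maps the set $\mathcal{C}_T \triangleq \mathfrak{A}_T \cap \mathbb{B}_0(R)_T \cap \mathbb{B}_N(R_N)_T$ into itself and is a $\delta$-contraction with $\delta<1$ in the weaker metric $d(U,\tilde{U}) \triangleq |U-\tilde{U}|_{T,\tilde{p}}$. The set $\mathcal{C}_T$ is closed in $d$: the pointwise sandwich condition in $\mathfrak{A}_T$ is preserved under $d$-limits via the Sobolev embedding $\tilde p>n/2\hookrightarrow C^0$, and the higher Sobolev norm constraints in $\mathbb{B}_0(R)_T$ and $\mathbb{B}_N(R_N)_T$ are lower-semicontinuous under $d$-convergence by the standard weak-limit argument. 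Hence $\mathcal{C}_T$ is complete in $d$, and Banach's fixed point theorem yields a unique $g^\epsilon\in\mathcal{C}_T$ with $\Phi_\epsilon(g^\epsilon)=g^\epsilon$. By the definition of $\Phi_\epsilon$ through (\ref{2.6}), this identity means precisely that $g^\epsilon$ satisfies the $\epsilon$-analogue of (\ref{2.5}) on $\Omega\times[0,T]$ with the prescribed initial and boundary data.

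For the second step, let $\epsilon\to 0^+$ along a sequence. Because Proposition \ref{prop 2.5} holds for each integer $N\geq\tilde p+2$ with constants $R_N$ and $T$ independent of $\epsilon$, the family $\{g^\epsilon\}$ is bounded in every spatial norm $|\cdot|_{T,N}$; using the equation to convert time derivatives into spatial ones extends this to uniform bounds on all spacetime derivatives of $g^\epsilon$ on $\bar\Omega\times[0,T]$. Arzelà-Ascoli together with Sobolev embedding extracts a subsequence converging in $C^\infty(\bar\Omega\times[0,T])$ to a smooth tensor $g$. Passing term-by-term to the limit in the $\epsilon$-regularized PDE (valid because $C^\infty$ convergence preserves every coefficient and derivative) and in the initial/boundary data shows that $g$ is a smooth solution of (\ref{2.5}) on $\Omega\times[0,T]$.

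The principal technical obstacle is verifying that the data $T, R, R_N, \delta$ provided by Propositions \ref{prop 2.4}-\ref{prop 2.6} can all be chosen uniformly for $\epsilon\in(0,1]$. Inspecting those proofs, $\epsilon$ enters only through $A^{\alpha\beta}_\epsilon$: the favourable coercive term $-\int(\chi^2+\epsilon)|\nabla^{\alpha+1}V|^2$ only becomes more negative as $\epsilon$ grows, while the same quantity appearing on the right-hand side of the energy estimates (\ref{2.47}), (\ref{2.52}), (\ref{2.62}), (\ref{2.128}) is absorbed after bounding $\chi^2+\epsilon\leq 2$. All remaining constants depend only on $n$, $\tilde p$, $R$, $R_N$, $\Vert\chi\Vert_N$, $\Vert\tilde{Rm}\Vert_N$, and the Sobolev constant $C_s(\Omega)$. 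Once this uniformity is confirmed, the compactness step closes the argument.
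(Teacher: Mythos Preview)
Your proposal is correct and follows essentially the same route as the paper: solve the $\epsilon$-regularized system (\ref{2.6}) by a contraction argument using Propositions \ref{prop 2.4}--\ref{prop 2.6}, then send $\epsilon\to 0$ using the uniform higher-order bounds. The only cosmetic difference is that the paper runs the Picard iteration $U^{i+1}=\Phi_\epsilon(U^i)$ explicitly from $U^0=\tilde g$ and extracts the fixed point as a $C^{\tilde p}$-limit of the iterates, whereas you invoke the Banach fixed-point theorem on $(\mathcal C_T,d)$ directly; the paper's choice has the mild advantage that each iterate is automatically smooth (as the solution of a uniformly parabolic linear system), so the bounds (\ref{2.134})--(\ref{2.135}) in every $\mathbb B_N(R_N)_T$ are available on the sequence itself rather than having to be recovered a posteriori for the abstract fixed point.
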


\pf
{Choose $m= 2\tilde{p}+ 2$ in Proposition \ref{prop 2.5} and $\delta= \frac{1}{2}$ in Proposition \ref{prop 2.6}, then we choose 
\[T= \min\{t_5, t_7 \}= T(n, \tilde{p}, G, ||\chi ||_{2\tilde{p}+ 2}, ||\widetilde{Rm}||_{2\tilde{p}+ 2}, C_s(\Omega, g_0)) \]

Set $U^{(0)}(x,t)= \tilde{g}(x)$, and $U_{\epsilon}^{(i+1)}= \Phi_{\epsilon}(U^{(i)})$ where $U_{\epsilon}^{(i)}$ are $(2,0)$-tensor on $\Omega\times [0, T]$, $i=0,1,2 \cdots$. Note $U^{(0)}\in \mathfrak A_T\cap \mathbb B_m(G)_T$ for any $m\geq \tilde{p}+ 2$. By Proposition \ref{prop 2.5}, we get $U_{\epsilon}^{(i)}\in \mathfrak A_T\cap \mathbb B_{2\tilde{p}+ 2}(G)_T$ for any $i$. Then we have the following:
\begin{equation}\label{2.53}
{\sup_{(x, t)\in \Omega\times [0, T]}\sup_{|\alpha|\leq \tilde{p}+ 2}|\tilde{\nabla}^{\alpha}U_{\epsilon}^{(i)}(x,t)| \leq C_1|U_{\epsilon}^{(i)}|_{T, 2\tilde{p}+ 2} \leq C_1G
}
\end{equation}
for any $i\geq 0$, where $C_1= C_1(C_s(\Omega, g_0), \tilde{p})$, and note $C_1$ is independent of $i$.

By (\ref{2.5}), we get
\begin{equation}\label{2.54}
{\sup_{|\beta |\leq \frac{\tilde{p}+ 2}{2}}\sup_{(x, t)\in \Omega\times [0, T]}|D_t^{\beta} U_{\epsilon}^{(i)}(x,t)| \leq C_2 
}
\end{equation}
for any $i\geq 0$, where $C_2= C_2(C_s(\Omega, g_0), n, \tilde{p}, G, \Vert\chi\Vert_{2}, \Vert\widetilde{Rm}\Vert_{0})$, note $C_2$ is independent of $i$.

Hence $\{U_{\epsilon}^{(i)}\}_{i=1}^{\infty}$ are uniformly bounded in $\mathbb B_{2\tilde{p}+ 2}(G)_{T}$, we can choose subsequence $U_{\epsilon}^{(k_i)}\rightarrow U_{\epsilon}^{\infty}$ in $C^{\tilde{p}+ 2, \frac{\tilde{p}+ 2}{2}}(\Omega\times [0, T])$ by Rellich-Kondrachov Compactness theorem, and $U_{\epsilon}^{\infty}\in \mathbb B_{\tilde{p}+ 2}(G)_{T}$. That is why we choose $m= 2\tilde{p}+ 2$ in Proposition \ref{prop 2.5} at the beginning of the proof. For $k_i\geq m> 0$, we get
\begin{equation}\nonumber
{|U_{\epsilon}^{(m)}- U_{\epsilon}^{(k_i)}|_{T, \tilde{p}}= |\Phi_{\epsilon}^m(U^{(0)})- \Phi_{\epsilon}^{k_i}(U^{(0)})|_{T, \tilde{p}}\leq \delta^m|U^{(0)}- \Phi_{\epsilon}^{k_i- m}(U^{(0)})|_{T, \tilde{p}}\leq 2(\frac{1}{2})^m G
}
\end{equation}
Hence
\begin{equation}\nonumber
{\lim_{m\rightarrow \infty}|U_{\epsilon}^{(m)}- U_{\epsilon}^{\infty}|_{T, \tilde{p}}\leq \lim_{m\rightarrow \infty}\Big( |U_{\epsilon}^{(m)}- U_{\epsilon}^{(k_i)}|_{T, \tilde{p}}+ |U_{\epsilon}^{(k_i)}- U_{\epsilon}^{\infty}|_{T, \tilde{p}} \Big)= 0
}
\end{equation}

Now 
\begin{align}
& |U_{\epsilon}^{\infty}- \Phi_{\epsilon}(U_{\epsilon}^{\infty})|_{T, \tilde{p}}= \lim_{m\rightarrow \infty}|U_{\epsilon}^{(m)}- \Phi_{\epsilon}(U_{\epsilon}^{\infty})|_{T, \tilde{p}} \nonumber \\
&\quad \quad = \lim_{m\rightarrow \infty}|\Phi_{\epsilon}(U_{\epsilon}^{(m- 1)})- \Phi_{\epsilon}(U_{\epsilon}^{\infty})|_{T, \tilde{p}} \leq \frac{1}{2} \lim_{m\rightarrow \infty}|U_{\epsilon}^{(m- 1)}- U_{\epsilon}^{\infty}|_{T, \tilde{p}}= 0 \nonumber 
\end{align}

then we have
\begin{equation}\nonumber
{|U_{\epsilon}^{\infty}- \Phi_{\epsilon}(U_{\epsilon}^{\infty})|_{C^{0}}\leq C_s(\Omega , g_0)|U_{\epsilon}^{\infty}- \Phi_{\epsilon}(U_{\epsilon}^{\infty})|_{T, \tilde{p}}= 0
}
\end{equation}

hence
\begin{equation}\label{2.55}
{U_{\epsilon}^{\infty}= \Phi_{\epsilon}(U_{\epsilon}^{\infty})
}
\end{equation}

By (\ref{2.53}) and (\ref{2.54}), we get $\{U_{\epsilon}^{\infty}\}_{0< \epsilon\leq 1}$ is uniformly bounded in $C^{\tilde{p}+ 2, \frac{\tilde{p}+ 2}{2}}(\Omega\times [0, T])$. So there exists subsequence $\{U_{\epsilon_i}^{\infty}\}_{i=1}^{\infty}$, such that
\begin{equation}\nonumber
{\lim_{i\rightarrow \infty}U_{\epsilon_i}^{\infty}= \hat{U}^{\infty} \quad in \quad C^{\tilde{p}, \frac{\tilde{p}}{2}}(\Omega\times [0, T])
}
\end{equation}
where $\lim_{i\rightarrow \infty}\epsilon_i= 0$.

On the other hand, by (\ref{2.55}) and (\ref{2.5}), we have 
\begin{equation}\label{2.56}
{(U_{\epsilon_i}^{\infty})_t= (\chi^2 + \epsilon_i)(U_{\epsilon_i}^{\infty})^{\alpha \beta}\tilde{\nabla}_{\alpha}\tilde{\nabla}_{\beta}(U_{\epsilon_i}^{\infty})+ D(U_{\epsilon_i}^{\infty})+ B(U_{\epsilon_i}^{\infty}, \tilde{\nabla} U_{\epsilon_i}^{\infty})
}
\end{equation}

Let $i\rightarrow \infty$ in (\ref{2.56}), we get 
\begin{equation}\nonumber
{(\hat{U}^{\infty})_t= \chi^2(\hat{U}^{\infty})^{\alpha \beta}\tilde{\nabla}_{\alpha}\tilde{\nabla}_{\beta}(\hat{U}^{\infty})+ D(\hat{U}^{\infty})+ B(\hat{U}^{\infty}, \tilde{\nabla}\hat{U}^{\infty})
}
\end{equation}

Then $\hat{U}^{\infty}$ is a solution of (\ref{2.4}) on $\Omega\times [0, T]$, and $\hat{U}^{\infty}\in C^{\tilde{p}, \frac{\tilde{p}}{2}}(\Omega\times [0, T])$. By regularity theory of parabolic system (note we have local regularity property where $\chi(x)> 0$, and $\hat{U}^{\infty}(x, t)= \tilde{g}_{ij}(x)$ where $\chi(x)= 0$), we have
\[\hat{U}^{\infty}\in C^{\infty}(\Omega\times [0, T])\]
 
Hence $\hat{U}^{\infty}$ is a smooth solution of (\ref{2.4}) on $\Omega\times [0, T]$.
}
\qed


\section{Evolution equations of curvature under the local Ricci flow}
Under local Ricci flow, the behavior of curvature tensors are affected by their evolution equations. These equations have simpler versions in Ricci flow context, just taking the cut-off function $\chi (x)\equiv 1$.

Now we calculate the evolution equation of the curvature tensor $Rm$. We use the following notations: $R^{l}_{ijk}= g^{lp}R_{ijkp}$, $R^r_i= g^{rp} R_{ip}$, and $R$ is the scalar curvature. 

\begin{lemma} \label{lem 3.1}
{Under the local Ricci flow (\ref{2.1}),
\begin{equation}\label{3.2}
{\frac{\partial}{\partial t} R_{ijkl}= \chi^2\Delta R_{ijkl}+ Q+ I_1+ I_2+ I_3 
}
\end{equation}
where $Q$, $I_1$, $I_2$, $I_3$ are defined in (\ref{3.9}), (\ref{3.5}),(\ref{3.6}),(\ref{3.7}).
}
\end{lemma}

\pf
{Firstly if $\frac{\partial}{\partial t}g_{ij}=h_{ij}$, then (see \cite{RI})
\begin{equation}\nonumber
{\frac{\partial}{\partial t}R_{ijk}^{l}=\frac{1}{2}g^{lp}
[\nabla_i \nabla_k h_{jp} + \nabla _j \nabla_p h_{ik} - \nabla_i \nabla_p h_{jk} - \nabla_j \nabla_k h_{ip}    -R_{ijk}^{q} h_{qp} -R_{ijp}^{q} h_{kq}]
}
\end{equation}

In local Ricci flow case,
\begin{equation}\nonumber
{h_{ij}=-2\chi^2 R_{ij}. 
}
\end{equation}

Now 
\begin{equation}\label{3.3}
{ \frac{\partial}{\partial t} R_{ijkl}= (-2\chi^2 R_{lm})R_{ijk}^m + g_{lm} (\frac{\partial}{\partial t} R_{ijk}^m).
}
\end{equation}

We calculate $g_{lm}\frac{\partial}{\partial t}R_{ijk}^m$ firstly:
\begin{equation}\nonumber
{\left.
\begin{array}{rl} 
g_{lm}\frac{\partial}{\partial t}R_{ijk}^m = \frac{1}{2}[\nabla_i \nabla_k h_{jl} + \nabla _j \nabla_l h_{ik} - \nabla_i \nabla_l h_{jk} - \nabla_j \nabla_k h_{il} -R_{ijk}^{q} h_{ql} -R_{ijl}^{q} h_{kq}]. 
\end{array}\right.
}
\end{equation}

We calculate $\nabla_i \nabla_k h_{jl}$ as a sample, using the notation $\chi_i=\nabla_i \chi$ and $\chi_{ij}= \nabla_i \nabla_j \chi$:
\begin{equation}\nonumber
{\left.
\begin{array}{rl} 
\nabla_i \nabla_k h_{jl}= -2(\chi^2 \nabla_i \nabla_k R_{jl} + 2\chi \chi_i \nabla_k R_{jl}+ 2\chi \chi_k \nabla_i R_{jl}+ 2\chi_i \chi_k R_{jl} + 2\chi \chi_{ik} R_{jl})
\end{array}\right.
}
\end{equation}

A straightforward computation gives
\begin{equation}\label{3.4}
{\left.
\begin{array}{rl} 
g_{lm}\frac{\partial}{\partial t}R_{ijk}^m&= \chi^2( \nabla_i \nabla_l R_{jk} + \nabla _j \nabla_k R_{il} - \nabla_i \nabla_k R_{jl} - \nabla_j \nabla_l R_{ik} )\\
&\quad + \chi^2 (R_{ijk}^q R_{ql} + R_{ijl}^q R_{kq})+ I_1+ I_2+ I_3
\end{array}\right.
}
\end{equation}

\noindent
where 
\begin{align}
I_1 &= 2\chi [ (\chi_k \nabla_j R_{il} + \chi_j \nabla_k R_{il} + \chi_l \nabla_i R_{jk} \label{3.5}\\
    &+ \chi_i \nabla_l R_{jk}) - (\chi_k \nabla_i R_{jl} + \chi_i \nabla_k R_{jl} + \chi_l \nabla_j R_{ik} + \chi_j \nabla_l R_{ik})] \nonumber\\
I_2 &= 2[ (\chi_i \chi_l R_{jk} + \chi_j \chi_k R_{il}) - (\chi_i \chi_k R_{jl} + \chi_j \chi_l R_{ik})] \label{3.6}\\
I_3 &= 2\chi [(\chi_{il}R_{jk} + \chi_{jk}R_{il}) - (\chi_{ik}R_{jl} + \chi_{jl}R_{ik})]. \label{3.7}
\end{align}

Now we compute $\Delta R_{ijkl}$. From the formula for $\Delta R_{ijk}^l$ in \cite{RI}, 
\begin{equation}\label{3.8}
{\left.
\begin{array}{rl} 
& \Delta R_{ijkl}= \Delta (g_{lm}R_{ijk}^m) = g_{lm} \Delta R_{ijk}^m \\
&= ( \nabla_i \nabla_l R_{jk} + \nabla _j \nabla_k R_{il} - \nabla_i \nabla_k R_{jl} - \nabla_j \nabla_l R_{ik} ) \\
&\quad + (R_j^r R_{irkl} - R_i^r R_{jrkl})- g^{pq}(R_{ijp}^r R_{rqkl} -2R_{pik}^r R_{jqrl} + 2R_{jqk}^r R_{pirl})
\end{array}\right.
}
\end{equation}

By (\ref{3.3}), (\ref{3.4}) and (\ref{3.8}), 
\begin{equation}\nonumber
{\frac{\partial}{\partial t} R_{ijkl}= \chi^2\Delta R_{ijkl}+ Q+ I_1+ I_2+ I_3,
}
\end{equation}

\noindent
where 
\begin{align}
Q  &= Q_1 + Q_2 \label{3.9}\\
Q_1&= \chi^2 g^{pq}(R_{ijp}^r R_{rqkl} -2R_{pik}^r R_{jqrl} + 2R_{jqk}^r R_{pirl}) \label{3.10}\\
Q_2&= -\chi^2 (R_{mjkl} R_i^m + R_{imkl} R_j^m + R_{ijml} R_k^m + R_{ijkm} R_l^m). \label{3.11}
\end{align}
}
\qed

For our applications, we need to estimate $\frac{\partial}{\partial t}|Rm|$.
 
\begin{lemma} \label{lem 3.2}
{Under the local Ricci flow (\ref{2.1}), we have
\begin{align}
\frac{\partial}{\partial t}|Rm| &\leq \chi^2 \Delta |Rm| + \Big(\frac{\chi^2 \big|\nabla |Rm|\big|^2}{|Rm|}\Big)- (1- \epsilon)\Big(\frac{\chi^2 |\nabla Rm|^2}{|Rm|}\Big) \label{3.12}\\
& + 10\chi^2 |Rm|^2 + \Big(\frac{10^5}{\epsilon}\Big) n|\nabla \chi|^2 |Rm| + \Big(\frac{8\chi g^{ri}g^{sj}g^{pk}g^{ql} R_{rspq} \chi_{il} R_{jk}}{|Rm|}\Big)\nonumber
\end{align}
where $\epsilon$ is any positive constant satisfying $0< \epsilon< 1$.
}
\end{lemma}

\pf
{\begin{equation}\nonumber
{\frac{\partial}{\partial t} |Rm|^2= 2g^{ri}g^{sj}g^{pk}g^{ql} R_{rspq} \Big(\frac{\partial}{\partial t} R_{ijkl} \Big) + \frac{\partial}{\partial t} (g^{ri}g^{sj}g^{pk}g^{ql})R_{rspq} R_{ijkl}
}
\end{equation}

Note 
\begin{equation}\nonumber
{\left.
\begin{array}{rl}
\frac{\partial}{\partial t} (g^{ri}) g^{sj}g^{pk}g^{ql}R_{rspq} R_{ijkl}= 2\chi^2 g^{ri}g^{sj}g^{pk}g^{ql} R_{rspq} R_{mjkl} R_{i}^m
\end{array}\right.
}
\end{equation}

Similarly, 
\begin{equation}\nonumber
{\left.
\begin{array}{rl}
\frac{\partial}{\partial t} (g^{ri}g^{sj}g^{pk}g^{ql})R_{rspq} R_{ijkl}= -2 g^{ri}g^{sj}g^{pk}g^{ql} R_{rspq}Q_2
\end{array}\right.
}
\end{equation}

Hence
\begin{equation}\nonumber
{\left.
\begin{array}{rl}
\frac{\partial}{\partial t} |Rm|^2= \chi^2 [\Delta (|Rm|^2) - 2|\nabla Rm|^2] + 2g^{ri}g^{sj}g^{pk}g^{ql} R_{rspq}[Q_1+ I_1+ I_2+ I_3]
\end{array}\right.
}
\end{equation}

Let $f= |Rm|$,
\begin{equation}\nonumber
{\frac{\partial}{\partial t} (f^2)= \chi^2 \Delta (f^2) - 2\chi^2 |\nabla Rm|^2 + 2g^{ri}g^{sj}g^{pk}g^{ql} R_{rspq}[Q_1+ I_1+ I_2+ I_3]
}
\end{equation}

Firstly we have the following estimate:
\begin{equation}\nonumber
{2g^{ri}g^{sj}g^{pk}g^{ql} R_{rspq}Q_1\leq 10\chi^2 f^3.
}
\end{equation}

It is easy to get
\begin{equation}\nonumber
{2g^{ri}g^{sj}g^{pk}g^{ql} R_{rspq}I_1 \leq 32\sqrt{n} \, \chi |\nabla \chi| |\nabla Rm|f\leq 2\epsilon(\chi |\nabla Rm|)^2 + (\frac{10^4}{\epsilon}) n|\nabla \chi|^2 f^2 ;
}
\end{equation}

also
\begin{equation}\nonumber
{2g^{ri}g^{sj}g^{pk}g^{ql} R_{rspq}I_2\leq 16\sqrt{n}\, |\nabla \chi|^2 f^2
}
\end{equation}

We can use the normal coordinate system to simplify: 
\begin{equation}\nonumber
{\left.
\begin{array}{rl}
2g^{ri}g^{sj}g^{pk}g^{ql} R_{rspq}I_3= 16\chi \sum_{ijkl}(\chi_{il} R_{jk} R_{ijkl})= 16\chi g^{ri}g^{sj}g^{pk}g^{ql} R_{rspq} \chi_{il} R_{jk}
\end{array}\right.
}
\end{equation}

Using normal coordinates it is also easy to see 
\begin{equation} \nonumber
{\big|\nabla |Rm| \big|\leq |\nabla Rm|.
}
\end{equation}

Then 
\begin{equation}\nonumber
{\left.
\begin{array}{rl}
\frac{\partial}{\partial t}(f^2) & \leq 2f\chi^2 \Delta f+ 2\chi^2|\nabla f|^2- (2- 2\epsilon)\chi^2 |\nabla Rm|^2 + 10\chi^2 f^3 \\
& \quad + (\frac{10^5}{\epsilon}) n|\nabla \chi|^2 f^2 + 16\chi g^{ri}g^{sj}g^{pk}g^{ql} R_{rspq} \chi_{il} R_{jk},
\end{array}\right.
}
\end{equation}

Finally
\begin{equation}\nonumber
{\left.
\begin{array}{rl}
\frac{\partial}{\partial t}f &\leq \chi^2 \Delta f + (\frac{\chi^2 |\nabla f|^2}{f})- (1- \epsilon)(\frac{\chi^2 |\nabla Rm|^2}{f}) \\
& \quad + 10\chi^2 f^2 + (\frac{10^5}{\epsilon}) n|\nabla \chi|^2 f + \Big(\frac{8\chi g^{ri}g^{sj}g^{pk}g^{ql} R_{rspq} \chi_{il} R_{jk}}{f} \Big).
\end{array}\right.
}
\end{equation}

Replace $f$ with $|Rm|$, we get our conclusion.
}
\qed

Using the formula in Chapter 3 of \cite{RI}, we can get the following formula (because the calculation is very similar with the reduction of Lemma \ref{lem 3.1}, we omit the calculation here, just give the formula):

\begin{lemma}\label{lem 3.3}
{Under the local Ricci flow (\ref{2.1}), 
\begin{align}
\frac{\partial}{\partial t}R_{ij} &=\chi^2 \Delta (R_{ij})+ J_1+ J_2+ J_3+ J_4, \label{3.12.1}\\
where \quad J_1 &= 2\chi g^{pq}\chi_p\nabla_q R_{ij}+ \chi(\chi_i\nabla_j R+ \chi_j\nabla_i R) \nonumber\\
 &- 2\chi\chi_p g^{pq}(\nabla_i R_{qj}+ \nabla_j R_{qi}), \nonumber \\
J_2 &=-2g^{pq}\chi(\chi_{ip}R_{qj}+ \chi_{jp}R_{qi}) + 2\chi\chi_{ij}R, \label{3.13}\\
J_3 &=-2g^{pq}(\chi_i\chi_p R_{qj}+ \chi_j\chi_p R_{qi})+ 2\chi_i\chi_j R,  \nonumber\\
J_4 &= 2\chi^2 g^{pq}(R_{qij}^r R_{rp}- R_{ip}R_{qj}). \nonumber
\end{align}
}
\end{lemma}

Imitating the reduction of Lemma \ref{lem 3.2}, we can also get the following theorem about $|Rc|$:

\begin{lemma} \label{lem 3.4}
{Under the local Ricci flow (\ref{2.1}),
\begin{align}
\frac{\partial}{\partial t}|Rc| &\leq \chi^2 \Delta |Rc| + \Big(\frac{\chi^2 \big|\nabla |Rc| \big|^2}{|Rc|}\Big)- (1- \epsilon)\Big(\frac{\chi^2 |\nabla Rc|^2}{|Rc|}\Big) \label{3.15}\\
& + (\frac{10^5}{\epsilon})n|\nabla \chi|^2 |Rc| + (2+ \sqrt{n})\chi^2 |Rc| |Rm|+ \Big(\frac{g^{ik}g^{jl}R_{kl} J_2}{|Rc|}\Big) \nonumber
\end{align}
where $J_2$ in the last line comes from (\ref{3.13}), and $\epsilon$ is any positive constant satisfying $0< \epsilon< 1$.
}
\end{lemma}

\pf
{Similar to the proof of Lemma \ref{lem 3.2}.
}
\qed

If $T_1$ and $T_2$ are two tensors, we use $T_1*T_2$ to denote all possible tensor product between them.

\begin{lemma}\label{lem W1}
{\begin{equation}\nonumber
{\frac{\partial}{\partial t}(\nabla^m Rm)= \chi^2 \Delta(\nabla^m Rm)
+ \sum_{i+ j+ k= m+2, \ k\leq m+1}\nabla^i\chi *\nabla^j\chi *\nabla^k Rm 
}
\end{equation}
\begin{equation}\nonumber
{\quad + \sum_{i+ j+ k= m}\nabla^i (\chi^2) *\nabla^j Rm* \nabla^k Rm
}
\end{equation}
}
\end{lemma}

\pf
{By induction, when $m= 0$, it is true from Lemma \ref{lem 3.1}. Suppose the conclusion holds for $m -1$, then we have:
\begin{align}
\frac{\partial}{\partial t}(\nabla \nabla^{m-1}Rm) &=\nabla^{m- 1}Rm* \nabla(\chi^2 Rc)+ \nabla(\chi^2\Delta \nabla^{m- 1}Rm) \nonumber \\
 &+ \nabla\Big[\sum_{i+ j+ k= m+ 1, \ k\leq m}\nabla^{i}\chi *\nabla^{j} \chi* \nabla^{k}Rm  \Big] \nonumber \\
& + \nabla\Big[\sum_{i+ j+ k= m- 1}\nabla^{i}(\chi^2) *\nabla^{j} Rm* \nabla^{k}Rm  \Big] \nonumber
\end{align}
And 
\begin{align}
\nabla\Big[\chi^2 \Delta \nabla^{m- 1}Rm \Big] &= \chi^2 \Delta \nabla^{m}Rm+ \chi^2\nabla^{m- 1}Rm* \nabla Rm \nonumber \\
&\quad + \chi^2 \nabla^m Rm* Rm+ \nabla(\chi^2)* \Delta \nabla^{m- 1}Rm\nonumber
\end{align}

We can consider $\nabla^{m- 1}Rm* \nabla(\chi^2 Rc)$, $\chi^2 \nabla^{m- 1}Rm* \nabla Rm$, $\chi^2 \nabla^{m}Rm* Rm$ as the terms in $\nabla\Big[\sum_{i+ j+ k= m- 1}\nabla^i(\chi^2)*\nabla^j Rm*\nabla^k Rm \Big]$ and $\nabla(\chi^2)*\Delta \nabla^{m- 1}Rm $ as one of the terms in $\nabla\Big[\sum_{i+ j+ k= m+ 1, k\leq m} \nabla^i \chi *\nabla^j \chi* \nabla^{k}Rm \Big]$, then we get our equation.
}
\qed

Similarly, we have
\begin{lemma}\label{lem W2}
{\begin{align}
\frac{\partial}{\partial t}\nabla^m (\chi^2 Rm) &= \chi^2 \Delta\nabla^m (\chi^2 Rm) \nonumber \\
 &+ \sum_{i+ j+ k= m+ 2,\ k\leq m+ 1}\nabla^i \chi* \nabla^j \chi* \nabla^k (\chi^2 Rm) \nonumber \\
&+ \sum_{i+ j+ k= m,\ k\leq m- 1}\nabla^i (\chi^2)* \nabla^j (\chi^2 Rm)* \nabla^k Rm+ \chi^2 Rm* \nabla^m (\chi^2 Rm)  \nonumber
\end{align}
and 
\[\frac{\partial}{\partial t}|\nabla^m \chi|\leq C(n, k)\sum_{k\leq m- 1} |\nabla^k (\chi^2 Rc)|\cdot |\nabla^{m- k}\chi|\]
}
\end{lemma}


\section{Extension of the local Ricci flow}

Firstly we prove the following ODE comparison lemmas, which will be used in the later proofs of this section.

\begin{lemma}\label{lem W3}
{Assume $f(t)$ and $g(t)$ are nonnegative functions, $t\in [0, T)$, $b$ is a nonnegative constant, and 
\[\frac{d f}{dt}\leq g+ bf\]
Then on $[0, T)$:
\[f(t)\leq \Big[f(0)+ \int_0^t e^{-bs}g(s) ds\Big]e^{bt} \]
}
\end{lemma}

\pf
{Let $h(t)= \Big[f(0)+ \int_0^t e^{-bs}g(s) ds\Big]e^{bt}$. Then $h(0)= f(0)$, and $h'(t)\geq f'(t)$ by assumption. Hence $h(t)\geq f(t)$.
}
\qed

\begin{lemma}\label{lem W4}
{Assume $f(t)$ is a nonnegative function, $t\in [0, T)$, $a$, $b$, $c$ are nonnegative constants, and 
\[\frac{d f}{dt}\leq a\int_0^t f(s)ds+ bf(t)+ c\]
Then there exist positive constants $w$, $k$ which depend on $a$, $b$, $c$ and $f(0)$ such that:
\[f(t)\leq w e^{kt}, \quad t\in [0, T). \]
}
\end{lemma}

\pf
{It suffices to choose $w$, $k$ large enough to ensure:
\[(w e^{kt})'> a\Big[\int_0^t (w e^{ks})ds\Big]+ b(w e^{kt})+ c \quad and \quad w> f(0) \]  
to get the comparison function.
}
\qed

We define $|\nabla \chi |_{(g(t), \infty)}= \max_{x\in \Omega} |\nabla\chi (x)|_{g(t)}$. Recall that we have defined $C_s(\Omega, g(t))$ in (\ref{2.7}), for simplicity we use the notation $C_s(\Omega, t)$ to replace it.  

\begin{lemma}\label{lem W5}
{If $|\chi^2 Rm|_{(g(t), \infty)}$ is uniformly bounded along a given solution $g(t)$ of local Ricci flow, $t\in [0, T)$, $0< T< +\infty$, then $|\nabla\chi|_{(g(t), \infty)}$ and the Sobolev constant $C_s(\Omega, t)$ are uniformly bounded on $[0, T)$.
}
\end{lemma}

\pf
{Because $\Omega$ is a bounded domain in $(M,g_0)$, by the proof of Theorem $3.5$ in \cite{NAM}, we get $C_s(\Omega,0)$ is finite. Then one only need to notice that the metrics $\{g(t)\}_{t\in [0, T)}$ are uniformly equivalent to each other by assumption.
}
\qed

\begin{lemma}\label{lem W6}
{Assume $g(t)$, $t\in [0, T)$, $0< T< +\infty$, is a smooth solution to local Ricci flow on $(M^n, g_0)$, $|\chi^2 Rm|_{(g(t), \infty)}$ is uniformly bounded on $[0, T)$. Then for any $p> 100$, $|Rm|_{(g(t), L^p(\Omega))}\triangleq \int_{\Omega} |Rm|_{g(t)}^p dV_{g(t)}$ is uniformly bounded on $[0, T)$. 
}
\end{lemma}

\pf
{Using Lemma \ref{lem 3.1}, 
\begin{align}
\frac{\partial}{\partial t}\Big(\int_{\Omega} |Rm|_{g(t)}^p dV_{g(t)} \Big)= p (I_1+ I_2+ I_3+ I_4+ I_5) \nonumber 
\end{align}
where 
\[ I_1=- \int_{\Omega}\chi^2 |\nabla Rm|^2 |Rm|^{p- 2}, \quad I_2= \int_{\Omega} |Rm|^{p- 2}\langle \chi\nabla \chi *\nabla Rm, Rm\rangle, \] 
\[I_3= \int_{\Omega} |Rm|^{p- 2}\langle \chi^2 Rm*Rm, Rm\rangle, \quad I_4= \int_{\Omega} |Rm|^{p- 2}\langle \nabla \chi *\nabla \chi *Rm, Rm\rangle, \]
\[I_5= \int_{\Omega} |Rm|^{p- 2}\langle \chi \nabla^2 \chi *Rm, Rm\rangle  \]

And 
\[I_2\leq C\int_{\Omega}|Rm|^p+ \frac{1}{16}\int_{\Omega} \chi^2 |\nabla Rm|^2 |Rm|^{p- 2} ,\]
\[I_3\leq \max_{t\in [0, T)} |\chi^2 Rm|_{(g(t), \infty)}\cdot \int_{\Omega} |Rm|^p\leq C\int_{\Omega}|Rm|^p ,\]
\[I_4\leq C\max_{t\in [0, T)} |\nabla\chi|_{(g(t), \infty)}\int_{\Omega}|Rm|^p\leq C\int_{\Omega}|Rm|^p\]
Applying integration by parts and Lemma \ref{lem W5} yields:
\begin{align}
I_5=\int_{\Omega} |Rm|^{p- 2}\langle \chi \nabla^2 \chi *Rm, Rm\rangle \leq C \int_{\Omega} |Rm|^{p}+ \frac{1}{16} \int_{\Omega} \chi^2 |\nabla Rm|^2 |Rm|^{p-2} \nonumber
\end{align}
Then
\begin{align}
\frac{\partial}{\partial t}\int_{\Omega} |Rm|^p\leq C\int_{\Omega} |Rm|^p \nonumber
\end{align}
we get 
\begin{align}
\int_{\Omega} |Rm|_{g(t)}^p dV_{g(t)}\leq e^{CT}\int_{\Omega} |Rm|_{g_0}^p dV_{g_0} \nonumber
\end{align}
Hence $|Rm|_{(g(t), L^p(\Omega))}$ are uniformly bounded on $[0, T)$.
}
\qed

\begin{prop}\label{prop W7}
{If $g(t)$ is a smooth solution of the local Ricci flow on $(M^n, g_0)$, $t\in [0, T)$, $0< T< +\infty$ and $m$ is a nonnegative integer. Assume on $[0, T)$, for any $q> 100$, $|\nabla^{i} \chi|_{(g(t), L^q(\Omega))}$ are uniformly bounded, where $0\leq i\leq m+ 1$; and $|\chi^2 Rm|_{(g(t), \infty)}$ are uniformly bounded. Then for any $p> 100$, $|\nabla^m Rm|_{(g(t), L^p(\Omega))}$ is uniformly bounded on $[0, T)$.
}
\end{prop}

\pf
{By induction. When $m= 0$, the conclusion follows from Lemma \ref{lem W6}. For $m\geq 1$, assume that for any $q> 100$, $\int_{\Omega} |\nabla^k Rm|_{g(t)}^q dV_{g(t)}$ is uniformly bounded on $[0, T)$, where $0\leq k\leq m- 1$. For simplicity, we use $\int$ instead of $\int_{\Omega}$ in the rest of the proof. 

Use Lemma \ref{lem W1}, 
\begin{equation}\label{w8}
{\begin{array}{rl}
\frac{\partial}{\partial t}\int |\nabla^m Rm|_{g(t)}^{p} dV_{g(t)} \leq p(I_1+ I_2+ I_3)+ C\int |\nabla^m Rm|_{g(t)}^{p} dV_{g(t)}
\end{array}
}
\end{equation}
where 
\begin{equation}\nonumber
I_1= \int |\nabla^m Rm|^{p- 2} \langle \chi^2 \Delta \nabla^m Rm, \nabla^m Rm \rangle \ ,
\end{equation}
\begin{equation}\nonumber
I_2= \int |\nabla^m Rm|^{p- 2} \sum_{i+ j+ k= m+ 2,\ k\le m+ 1}\langle \nabla^i\chi *\nabla^j \chi *\nabla^k Rm, \nabla^m Rm \rangle \ ,
\end{equation}
\begin{equation}\nonumber
I_3= \int |\nabla^m Rm|^{p- 2} \sum_{i+ j+ k= m}\langle \nabla^i(\chi^2) *\nabla^j(Rm) *\nabla^k Rm, \nabla^m Rm \rangle 
\end{equation}

Use integration by parts and H\"older inequality,
\begin{align}
I_2&\leq C\int |\nabla^m Rm|^{p- 1}\cdot |\nabla^{m+ 1}Rm| \cdot |\nabla \chi | \cdot |\chi |+ C\int |\nabla^{m} Rm|^{p}\cdot |\nabla\chi |^2 \nonumber \\
&\quad + C\sum_{i+ j+ k= m+ 2,\ 1\leq k\leq m- 1} \Big[ \Big(\int |\nabla^m Rm|^p\Big)^{\frac{p- 1}{p}}\cdot \Big(\int |\nabla^k Rm|^{3p}\Big)^{\frac{1}{3p}} \nonumber \\
&\quad \quad \cdot \Big(\int |\nabla^i \chi|^{3p}\Big)^{\frac{1}{3p}} \cdot \Big(\int |\nabla^j \chi|^{3p}\Big)^{\frac{1}{3p}}\Big]+ I_4 \nonumber \\
& + C\sum_{i+ j= m+ 2,\ i,\ j\leq m+ 1} \Big[ \Big(\int |\nabla^m Rm|^p\Big)^{\frac{p- 1}{p}}\cdot \Big(\int |Rm|^{3p}\Big)^{\frac{1}{3p}} \nonumber \\
&\quad \quad \quad \cdot \Big(\int |\nabla^i \chi|^{3p}\Big)^{\frac{1}{3p}} \cdot \Big(\int |\nabla^j \chi|^{3p}\Big)^{\frac{1}{3p}}\Big]  \nonumber  
\end{align}

Use Schwartz inequality, 
\begin{equation}\label{w8.0}
{\left.
\begin{array}{rl}
I_2 &\leq \frac{1}{16} \int \chi^2  |\nabla^{m+ 1} Rm|^2 |\nabla^m Rm|^{p- 2} + C\int |\nabla^m Rm|^{p}   \\
&\quad + C|\nabla^m Rm|_{(g(t), L^{p}(\Omega))}^{p- 1}+ I_4 
\end{array}\right.
}
\end{equation}
where $I_4\triangleq C \int |\nabla^m Rm|^{p- 2} \cdot \langle \chi \nabla^{m+ 2}\chi *Rm, \nabla^m Rm \rangle$. Use integration by parts on $I_4$, 
\begin{align}
I_4&\leq C\int \chi |\nabla^m Rm|^{p- 2} \cdot |\nabla^{m+ 1}Rm| \cdot |\nabla^{m+ 1}\chi |\cdot |Rm| \nonumber \\
&\quad + C\int |\nabla^m Rm|^{p- 1} \cdot |\nabla^{m+ 1}\chi | \cdot |\nabla\chi |\cdot |Rm|+ I_5
 \nonumber \\
&\leq C \Big(\int \chi^2 |\nabla^m Rm|^{p- 2}|\nabla^{m+ 1} Rm|^2\Big)^{\frac{1}{2}}\cdot \Big(\int |\nabla^m Rm|^p\Big)^{\frac{p- 1}{2p}}+ C|\nabla^m Rm|_{(g(t), L^p(\Omega))}^{p- 1}+ I_5 \nonumber
\end{align}
in the last inequality, we used the assumption of the proposition and Lemma \ref{lem W6}. Then
\begin{equation}\label{w8.1}
{I_4\leq \frac{1}{16} \int \chi^2 |\nabla^m Rm|^{p- 2}|\nabla^{m+ 1} Rm|^2+ C|\nabla^m Rm|_{(g(t), L^p(\Omega))}^{p- 1}+ I_5
}
\end{equation}
where $I_5\triangleq C \Big|\int |\nabla^m Rm|^{p- 2} \langle \chi\nabla^{m+ 1}\chi * \nabla Rm,  \nabla^m Rm \rangle \Big|$.

To estimate $I_5$, there are two cases:

If $m= 1$, then $\nabla Rm= \nabla^m Rm$, and by definition of $I_5$:
\begin{align}
I_5&\leq C \int |\nabla^m Rm|^{p- 1} |\nabla^{m+ 1}Rm |\cdot |\nabla^m \chi |\cdot |\chi |+ C\int |\nabla^m Rm|^p \cdot |\nabla \chi |^2  \nonumber \\
&\leq \frac{1}{16} \int \chi^2 |\nabla^m Rm|^{p- 2} |\nabla^{m+ 1}Rm |^2 +  C\int |\nabla^m Rm|^p \Big[ |\nabla^m \chi |^2+ |\nabla \chi |^2 \Big] \nonumber \\
&\leq \frac{1}{16} \int \chi^2 |\nabla^m Rm|^{p- 2} |\nabla^{m+ 1}Rm |^2 +  C\int |\nabla^m Rm|^p \label{w9}
\end{align}
the last inequality above uses the fact that $m= 1$ and $|\nabla \chi|_{(g(t), \infty)}$ are uniformly bounded on $[0, T)$.

If $m\geq 2$, by induction assumption, for any $q> 100$, $\int |\nabla Rm|^q$ is uniformly bounded on $[0, T)$.
\begin{equation}\nonumber
{I_5 \leq C\int |\nabla^m Rm|^{p- 1} \cdot |\nabla^{m+ 1} \chi |\cdot |\nabla Rm|
}
\end{equation}
\begin{equation}\nonumber
{\quad \leq C\Big(\int |\nabla^m Rm|^{p}\Big)^{\frac{p- 1}{p}} \cdot \Big(\int |\nabla^{m+ 1} \chi |^{2p} \Big)^{\frac{1}{2p}} \cdot \Big(\int |\nabla Rm|^{2p} \Big)^{\frac{1}{2p}}
}
\end{equation}
\begin{equation}\label{w10}
{I_5 \leq C |\nabla^m Rm|_{(g(t), L^p(\Omega))}^{p- 1}
}
\end{equation}

By (\ref{w9}) and (\ref{w10}), 
\begin{equation}\label{w11}
{I_5 \leq \frac{1}{16} \int \chi^2 |\nabla^m Rm|^{p- 2} |\nabla^{m+ 1}Rm |^2 +  C\int |\nabla^m Rm|^p+ C |\nabla^m Rm|_{(g(t), L^p(\Omega))}^{p- 1}
}
\end{equation}

By (\ref{w8.1}) and (\ref{w11}), 
\begin{equation}\label{w12}
{I_4\leq \frac{1}{8} \int \chi^2 |\nabla^m Rm|^{p- 2}|\nabla^{m+ 1} Rm|^2+  C\int |\nabla^m Rm|^p+ C|\nabla^m Rm|_{(g(t), L^p(\Omega))}^{p- 1}
}
\end{equation}

By (\ref{w8.0}) and (\ref{w12}), 
\begin{equation}\label{w13}
{I_2 \leq \frac{3}{16} \int \chi^2 |\nabla^{m+ 1} Rm|^2\cdot |\nabla^m Rm|^{p- 2} + C\int |\nabla^m Rm|^{p} + C|\nabla^m Rm|_{(g(t), L^{p}(\Omega))}^{p- 1}
}
\end{equation}

To estimate $I_3$,
\begin{equation}\nonumber
{I_3\leq C\int |\nabla^m Rm|^{p} |\chi^2 Rm| + C\sum_{i+ j+ k= m,\ j,\ k\leq m- 1}\int |\nabla^{m} Rm|^{p- 1} |\nabla^i(\chi^2)| |\nabla^j Rm| |\nabla^k Rm| 
}
\end{equation}
\begin{equation}\nonumber
{\quad \leq C\int |\nabla^m Rm|^{p} + C\sum_{i_1+ i_2+ j+ k= m,\ j,\ k\leq m- 1}\Big[ \Big(\int |\nabla^{m} Rm|^{p- 1}\Big)^{\frac{p- 1}{p}}\cdot \Big(\int |\nabla^{i_1}\chi |^{4p}\Big)^{\frac{1}{4p}} 
}
\end{equation}
\begin{equation}\nonumber
{\quad \quad \cdot \Big(\int |\nabla^{i_2}\chi |^{4p}\Big)^{\frac{1}{4p}} \cdot \Big(\int |\nabla^{j}Rm |^{4p}\Big)^{\frac{1}{4p}} \Big(\int |\nabla^{k}Rm|^{4p}\Big)^{\frac{1}{4p}} \Big]
}
\end{equation}

By induction assumption and the assumption of the proposition, 
\begin{equation}\label{w14}
{I_3\leq C\int |\nabla^m Rm|^{p}+ C|\nabla^m Rm|_{(g(t), L^p(\Omega))}^{p- 1}
}
\end{equation}

For $I_1$, integration by parts yields:
\begin{equation}\nonumber
{I_1\leq 2\int |\nabla^m Rm|^{p- 1}\chi |\nabla\chi | |\nabla^{m+ 1}Rm|- \int \chi^2 |\nabla^m Rm|^{p- 2}|\nabla^{m+ 1}Rm|^2 
}
\end{equation}

Use H\"older inequality and Schwartz inequality, 
\begin{equation}\label{w15}
{I_1\leq -\frac{15}{16}\int \chi^2 |\nabla^m Rm|^{p- 2}|\nabla^{m+ 1}Rm|^2+ C\int |\nabla^m Rm|^p
}
\end{equation}

By (\ref{w13}), (\ref{w14}), (\ref{w15}) and (\ref{w8}), 
\begin{align}
\frac{\partial}{\partial t} \int |\nabla^m Rm|^p &\leq -\frac{3}{4}\int \chi^2 |\nabla^m Rm|^{p- 2}|\nabla^{m+ 1}Rm|^2 \nonumber \\
&\quad + C\int |\nabla^m Rm|^p+ C|\nabla^m Rm|_{(g(t), L^p(\Omega))}^{p- 1} \nonumber 
\end{align}
By Young's inequality, 
\begin{equation}\nonumber
{\frac{\partial}{\partial t} \int |\nabla^m Rm|^p \leq C\Big(\int |\nabla^m Rm|^p+ 1 \Big)
}
\end{equation}

Then $\int |\nabla^m Rm|_{g(t)}^p\leq \Big[\int |\nabla^m Rm|_{g_0}^p+ 1\Big]e^{CT}- 1$, $t\in [0, T)$. Hence 
$|\nabla^m Rm|_{(g(t), L^{p}(\Omega))}$ is uniformly bounded on $[0, T)$.
}
\qed

\begin{theorem}\label{thm W8}
{If $g(t)$, $t\in [0, T)$, $0< T< +\infty$ is a smooth solution of the local Ricci flow on $(M^n, g_0)$. Assume $|\chi^2 Rm|_{(g(t), \infty)}$ is uniformly bounded on $[0, T)$, then for any $m\geq 0$, $|\nabla^m (\chi^2 Rm)|_{(g(t), \infty)}$ is uniformly bounded on $[0, T)$.
}
\end{theorem}

\pf
{By Lemma \ref{lem W5}, $|\nabla\chi |_{(g(t), \infty)}$ and $C_s(\Omega, t)$ are uniformly bounded on $[0, T)$, $g(t)$ are uniformly equivalent to each other, then it suffices to prove 

$(C1)$: ``For any $p> 100$, $m\geq 0$, $|\nabla^m (\chi^2 Rm)|_{(g(t), L^p(\Omega))}$ is uniformly bounded on $[0, T)$."

We prove $(C1)$ by induction on $m$. When $m= 0$, the conclusion follows from the assumption of the theorem. Assume 

$(A1)$: ``For any $p> 100$, $|\nabla^{l}(\chi^2 Rm)|_{(g(t), L^p(\Omega))}$ is uniformly bounded on $[0, T)$, where $0\leq l\leq m- 1$ and $m\geq 1$."

We will prove 

$(C2)$: ``For any $p> 100$, $|\nabla^{m}(\chi^2 Rm)|_{(g(t), L^p(\Omega))}$ is uniformly bounded on $[0, T)$." 

Firstly we claim: 

$(C3)$: ``For any $p> 100$, $\int |\nabla^k \chi |^p$ is uniformly bounded on $[0, T)$, where $0\leq k\leq m$." 

We prove $(C3)$ by induction on $k$. When $k= 0$, the conclusion follows from the fact that $g(t)$ are uniformly equivalent to each other under the assumption in the theorem. Assume 

$(A2)$: ``For any $q> 100$, $\int |\nabla^i \chi |^q$ is uniformly bounded on $[0, T)$, where $0\leq i\leq k- 1$." 

Note $k\leq m$ in our claim. Use Lemma \ref{lem W2}, 
\begin{align}
\frac{\partial}{\partial t}\int |\nabla^k \chi |^p&\leq C\sum_{1\leq i\leq k- 1} \Big( \int |\nabla^k \chi |^{p}\Big)^{\frac{p- 1}{p}} \cdot \Big( \int |\nabla^i (\chi^2 Rm)|^{2p}\Big)^{\frac{1}{2p}} \cdot \Big( \int |\nabla^{k- i}\chi |^{2p} \Big)^{\frac{1}{2p}} \nonumber \\
&\quad \quad + C \int |\nabla^k \chi |^p \cdot |\chi^2 Rm|  \nonumber \\
&\leq C |\nabla^k \chi |_{(g(t), L^p(\Omega))}^{p- 1}+ C \int |\nabla^k \chi |^p\leq C\Big(\int |\nabla^k \chi |^p + 1\Big) \nonumber 
\end{align}
For the last two inequalities, we used the assumption of the theorem, $(A1)$, $(A2)$ and Young's inequality. Hence $\int |\nabla^k \chi |_{g(t)}^p dV_{g(t)}$ is uniformly bounded on $[0, T)$.  From induction method, the claim $(C3)$ is proved. 

From the assumption of the theorem, $(C3)$ and Proposition \ref{prop W7}, we get

$(C4)$: ``For any $p> 100$, $|\nabla^i Rm |_{g(t), L^p(\Omega)}$ is uniformly bounded on $[0, T)$ where $0\leq i\leq m- 1$."

By Lemma \ref{lem W2} and integration by parts, 
\begin{equation}\label{w17}
{\frac{\partial}{\partial t}\int |\nabla^m (\chi^2 Rm)|^p \leq I_1+ I_2+ I_3+ I_4
}
\end{equation}
where 
\[I_1= -\int \chi^2 |\nabla^{m+ 1}(\chi^2 Rm)|^2 |\nabla^m (\chi^2 Rm)|^{p- 2}, \]
\[I_2= \sum_{i+ j+ k= m+ 2,\ k\leq m+ 1}\int |\nabla^m (\chi^2 Rm)|^{p- 2} \langle \nabla^m (\chi^2 Rm), \nabla^i \chi *\nabla^j \chi *\nabla^k(\chi^2 Rm) \rangle, \]
\[I_3= \sum_{i+ j+ k= m,\ k\leq m- 1}\int |\nabla^m (\chi^2 Rm)|^{p- 2} \langle \nabla^m (\chi^2 Rm), \nabla^i (\chi^2) *\nabla^j (\chi^2 Rm) *\nabla^k(Rm) \rangle, \]
\[I_4= \int |\nabla^m (\chi^2 Rm)|^{p- 2} \langle \nabla^m(\chi^2 Rm), \chi^2 Rm *\nabla^m (\chi^2 Rm) \rangle \]

To estimate $I_2$, 
\begin{equation}\nonumber
{I_2= \int |\nabla^m (\chi^2 Rm)|^{p- 2} \langle \nabla^m(\chi^2 Rm), \chi\nabla^{m+ 2}\chi *(\chi^2 Rm) \rangle
}
\end{equation}
\begin{equation}\nonumber
{\quad + \int |\nabla^m (\chi^2 Rm)|^{p- 2} \langle \nabla^m(\chi^2 Rm), \nabla \chi *\nabla^{m+ 1}\chi *(\chi^2 Rm) \rangle
}
\end{equation}
\begin{equation}\nonumber
{\quad + \int |\nabla^m (\chi^2 Rm)|^{p- 2} \langle \nabla^m(\chi^2 Rm),  \chi \nabla^{m+ 1}\chi *\nabla(\chi^2 Rm) \rangle
}
\end{equation}
\begin{equation}\nonumber
{\quad + \int |\nabla^m (\chi^2 Rm)|^{p- 2} \langle \nabla^m(\chi^2 Rm),  \chi \nabla\chi *\nabla^{m+ 1}(\chi^2 Rm) \rangle
}
\end{equation}
\begin{equation}\nonumber
{\quad + \sum_{i+ j+ k= m+ 2,\ i,\ j\leq m,\ k\leq m- 1}\int |\nabla^m (\chi^2 Rm)|^{p- 2} \langle \nabla^m(\chi^2 Rm),  \nabla^i \chi *\nabla^j \chi *\nabla^k (\chi^2 Rm) \rangle
}
\end{equation}
\begin{equation}\nonumber
{\quad + \int |\nabla^m (\chi^2 Rm)|^{p- 2} \langle \nabla^m(\chi^2 Rm),  (\chi\nabla^2\chi+ \nabla\chi* \nabla\chi) *\nabla^m (\chi^2 Rm) \rangle
}
\end{equation}

Integration by parts, 
\begin{equation}\label{w18}
{I_2\leq \Gamma_1+ \Gamma_2+ \Gamma_3+ \Gamma_4+ \Gamma_5
}
\end{equation}
where 
\[\Gamma_1= C\int |\nabla^m (\chi^2 Rm)|^{p- 2} \cdot |\nabla^{m+ 1}(\chi^2 Rm)| \cdot \chi |\nabla^{m+ 1}\chi| \cdot |\chi^2 Rm|\]
\[\Gamma_2= C\int |\nabla^m (\chi^2 Rm)|^{p- 1} \cdot |\nabla \chi| \cdot |\nabla^{m+ 1} \chi| \cdot |\chi^2 Rm|\]
\[\Gamma_3= \Big|\int |\nabla^m(\chi^2 Rm)|^{p- 2} \langle \nabla^m (\chi^2 Rm), \chi\nabla^{m+ 1}\chi *\nabla(\chi^2 Rm) \rangle \Big|\]
\[\Gamma_4= C\int \chi|\nabla^{m+ 1}(\chi^2 Rm)|\cdot |\nabla^m(\chi^2 Rm)|^{p- 1}+ C\int |\nabla^m(\chi^2 Rm)|^p \]
\[\Gamma_5= C\sum_{i+ j+ k= m+2,\ i,j\leq m,\ k\leq m- 1} \int |\nabla^m (\chi^2 Rm)|^{p- 2} \cdot |\nabla^i \chi| \cdot |\nabla^j \chi|\cdot |\nabla^k (\chi^2 Rm)| \]

Use H\"older inequality and Young's inequality,
\begin{align}
\Gamma_1  \leq \frac{1}{16} \int \chi^2 |\nabla^m (\chi^2 Rm)|^p |\nabla^{m+ 1}(\chi^2 Rm)|^2+ C \int |\nabla^m(\chi^2 Rm)|^p + C\int |\nabla^{m+ 1} \chi |^p \label{w19}
\end{align}

We can also estimate $\Gamma_2$, $\Gamma_4$ and $\Gamma_5$ in similar way:
\begin{align}
\Gamma_2 \leq C \int |\nabla^m(\chi^2 Rm)|^p+ C\int |\nabla^{m+ 1} \chi |^p \label{w20}
\end{align}

\begin{equation}\label{w21}
{\left.
\begin{array}{rl}
\Gamma_4 \leq \frac{1}{16} \int \chi^2 |\nabla^m (\chi^2 Rm)|^p |\nabla^{m+ 1}(\chi^2 Rm)|^2+ C \int |\nabla^m(\chi^2 Rm)|^p 
\end{array}\right.
}
\end{equation}
\begin{equation}\label{w22}
{\left.
\begin{array}{rl}
\Gamma_5 &\leq C \int |\nabla^m(\chi^2 Rm)|^p+ C\sum_{i+ j+ k= m+ 2,\ i,j\leq m,\ k\leq m- 1} \Big[\int |\nabla^i \chi |^{3p} \\
&\quad  + \int |\nabla^j \chi |^{3p} + \int |\nabla^k (\chi^2 Rm) |^{3p} \Big] \\
&\leq C \int |\nabla^m(\chi^2 Rm)|^p+ C
\end{array}\right.
}
\end{equation}
the last inequality in estimating $\Gamma_5$ follows from $(A1)$ and $(C3)$.

To estimate $\Gamma_3$, there are two cases: Case $(1)$ and Case $(2)$.

Case $(1)$: If $m= 1$, do integration by parts, 
\begin{equation}\nonumber
\Gamma_3\leq C\int |\nabla^m(\chi^2 Rm)|^{p- 1} |\nabla^{m+ 1} (\chi^2 Rm)| (\chi |\nabla\chi |)+ C\int  |\nabla^m (\chi^2 Rm)|^p |\nabla \chi |^2
\end{equation}
Use the fact that $|\nabla \chi|_{(g(t), \infty)}$ is uniformly bounded, we get that if $m= 1$,
\begin{equation}\label{w23}
\Gamma_3 \leq \frac{1}{16} \int \chi^2 |\nabla^m (\chi^2 Rm)|^p |\nabla^{m+ 1}(\chi^2 Rm)|^2+ C\int |\nabla^m(\chi^2 Rm)|^{p} 
\end{equation}

Case $(2)$: If $m\geq 2$, use integration by parts, 
\begin{equation}\nonumber
{\left.
\begin{array}{rl}
\Gamma_3 &\leq C \int |\nabla^m(\chi^2 Rm)|^{p- 2} \cdot |\nabla^{m+ 1}(\chi^2 Rm)| \cdot |\nabla(\chi^2 Rm)| \cdot \chi |\nabla^m \chi | \\
&\quad + C \int |\nabla^m(\chi^2 Rm)|^{p- 1} |\nabla\chi | |\nabla^{m}\chi | |\nabla(\chi^2 Rm)| + \Gamma_{3,1}
\end{array}\right.
}
\end{equation}
where 
\[\Gamma_{3,1}= \Big| \int |\nabla^m(\chi^2 Rm)|^{p- 2} \langle \nabla^m (\chi^2 Rm), \chi\nabla^m \chi *\nabla^2(\chi^2 Rm) \rangle \Big|\]

Use H\"older inequality and Young's inequality, 
\begin{equation}\nonumber
{\left.
\begin{array}{rl}
\Gamma_3 &\leq \frac{1}{16} \int \chi^2 |\nabla^m (\chi^2 Rm)|^p |\nabla^{m+ 1}(\chi^2 Rm)|^2+ C\int |\nabla^m(\chi^2 Rm)|^{p} \\
&\quad + C\int |\nabla(\chi^2 Rm)|^{2p}+ C\int |\nabla^m \chi |^{2p}+ \Gamma_{3,1} 
\end{array}\right.
}
\end{equation}
Note $m\geq 2$, use $(A1)$ and $(C3)$, 
\begin{equation}\label{w24}
\Gamma_3 \leq \frac{1}{16} \int \chi^2 |\nabla^m (\chi^2 Rm)|^p |\nabla^{m+ 1}(\chi^2 Rm)|^2+ C\int |\nabla^m(\chi^2 Rm)|^{p} + C+ \Gamma_{3,1} 
\end{equation}

To estimate $\Gamma_{3,1}$, note that $m\geq 2$, and there are still two cases: Case $(2.1)$ and Case $(2.2)$.

Case $(2.1)$: If $m= 2$, use integration by parts and the fact that $|\nabla\chi |_{(g(t), \infty)}$ is uniformly bounded, 
\begin{equation}\label{w25}
{\left.
\begin{array}{rl}
\Gamma_{3,1}\leq  \frac{1}{16} \int \chi^2 |\nabla^m (\chi^2 Rm)|^p |\nabla^{m+ 1}(\chi^2 Rm)|^2+ C\int |\nabla^m(\chi^2 Rm)|^{p}
\end{array}\right.
}
\end{equation}

Case $(2.2)$: If $m\geq 3$, use H\"older inequality and Young's inequality, 
\begin{equation}\label{w26}
{\left.
\begin{array}{rl}
\Gamma_{3,1}&\leq  C\int |\nabla^m(\chi^2 Rm)|^{p}+ C\int |\nabla^{m}\chi |^{2p}+ C\int |\nabla^2 (\chi^2 Rm) |^{2p}  \\
&\leq  C\int |\nabla^m(\chi^2 Rm)|^{p}+ C
\end{array}\right.
}
\end{equation}
the last inequality follows from $(A1)$ and $(C3)$.

By (\ref{w25}) and (\ref{w26}), 
\begin{equation}\label{w27}
{\left.
\begin{array}{rl}
\Gamma_{3,1}\leq \frac{1}{16} \int \chi^2 |\nabla^m (\chi^2 Rm)|^p |\nabla^{m+ 1}(\chi^2 Rm)|^2+ C\int |\nabla^m(\chi^2 Rm)|^{p}+ C
\end{array}\right.
}
\end{equation}

From (\ref{w24}) and (\ref{w27}), we get that if $m\geq 2$,
\begin{equation}\label{w28}
{\left.
\begin{array}{rl}
\Gamma_{3}\leq \frac{1}{8} \int \chi^2 |\nabla^m (\chi^2 Rm)|^p |\nabla^{m+ 1}(\chi^2 Rm)|^2+ C\int |\nabla^m(\chi^2 Rm)|^{p}+ C
\end{array}\right.
}
\end{equation}

From (\ref{w23}) and (\ref{w28}), when $m\geq 1$, (\ref{w28}) holds.

By (\ref{w18}), (\ref{w19}), (\ref{w20}), (\ref{w21}), (\ref{w22}) and (\ref{w28}),  
\begin{equation}\label{w29}
{\left.
\begin{array}{rl}
I_2&\leq \frac{1}{4} \int \chi^2 |\nabla^m (\chi^2 Rm)|^p |\nabla^{m+ 1}(\chi^2 Rm)|^2+ C\int |\nabla^m(\chi^2 Rm)|^{p}\\
&\quad  + C\int |\nabla^{m+ 1}\chi |^p+ C
\end{array}\right.
}
\end{equation}

To estimate $I_3$, 
\begin{align}
I_3& \leq C \Big[ \sum_{i_1+ i_2+ j+ k= m,\ k\leq m- 1,\ j\leq m- 1} \Big(\int |\nabla^m (\chi^2 Rm)|^{p}\Big)^{\frac{1}{p}}\cdot \Big(\int |\nabla^{i_1}\chi |^{4p} \Big)^{\frac{1}{4p}}   \nonumber \\
&\quad \cdot \Big(\int |\nabla^{i_2}\chi |^{4p} \Big)^{\frac{1}{4p}} \cdot \Big(\int |\nabla^{j} (\chi^2 Rm)|^{4p} \Big)^{\frac{1}{4p}} \cdot \Big(\int |\nabla^{k}Rm|^{4p} \Big)^{\frac{1}{4p}} \Big] \nonumber \\
&\quad + C\int |\nabla^{m} (\chi^2 Rm)|^p \nonumber
\end{align}
Hence from $(A1)$, $(C3)$, $(C4)$ and Young's inequality, 
\begin{equation}\label{w30}
{I_3\leq C\int |\nabla^{m} (\chi^2 Rm)|^p+ C
}
\end{equation}

From the assumption of the theorem,
\begin{equation}\label{w31}
{I_4\leq C\int |\nabla^{m} (\chi^2 Rm)|^p
}
\end{equation}

By (\ref{w29}), (\ref{w30}), (\ref{w31}) and (\ref{w17}), 
\begin{equation}\label{w32}
{\left.
\begin{array}{rl}
\frac{\partial}{\partial t} \int |\nabla^m (\chi^2 Rm)|^p &\leq -\frac{3}{4} \int \chi^2 |\nabla^m (\chi^2 Rm)|^p |\nabla^{m+ 1}(\chi^2 Rm)|^2\\
&\quad  + C\int |\nabla^m(\chi^2 Rm)|^{p} + C\int |\nabla^{m+ 1}\chi |^p+ C
\end{array}\right.
}
\end{equation}

By Lemma \ref{lem W2}, 
\begin{align}
\frac{\partial}{\partial t} \int |\nabla^{m+ 1} \chi |^p &\leq C \int |\nabla^{m+ 1}\chi |^{p}\cdot |\chi^2 Rc|+ C\int |\nabla^{m+ 1}\chi |^{p- 1} \cdot |\nabla^m (\chi^2 Rm)|\cdot |\nabla \chi | \nonumber \\
&\quad + C\sum_{1\leq k\leq m- 1}\int |\nabla^{m+ 1}\chi |^{p- 1} \cdot |\nabla^k (\chi^2 Rm)|\cdot |\nabla^{m+ 1- k}\chi |  \nonumber \\
&\leq C\int |\nabla^{m+ 1}\chi |^{p} + C\Big(\int |\nabla^{m+ 1}\chi |^{p}\Big)^{\frac{1}{p}} \cdot \Big(\int |\nabla^m (\chi^2 Rm)|^p\Big)^{\frac{1}{p}} \nonumber \\
&+ C\sum_{1\leq k\leq m- 1}\Big[ \Big(\int |\nabla^{m+ 1}\chi |^{p}\Big)^{\frac{1}{p}}\cdot \Big(\int |\nabla^{k}\chi |^{2p}\Big)^{\frac{1}{2p}} \cdot \Big(\int |\nabla^{m+ 1- k}\chi |^{2p}\Big)^{\frac{1}{2p}} \Big] \nonumber 
\end{align}
the last inequality follows from H\"older inequality and the assumption of theorem. 

Hence by Young's inequality, $(A1)$ and $(C3)$, 
\begin{equation}\label{w33}
{\frac{\partial}{\partial t} \int |\nabla^{m+ 1} \chi |^p\leq C_1\int |\nabla^{m+ 1}\chi |^{p}+ C_2 \int |\nabla^m (\chi^2 Rm)|^p + C_3 
}
\end{equation}

By (\ref{w33}) and Lemma \ref{lem W3}, on $[0, T)$, 
\begin{align}
\int |\nabla^{m+ 1}\chi |_{g(t)}^p dV_{g(t)}&\leq e^{C_1 t} \Big[\int |\nabla^{m+ 1}\chi |_{g_0}^p dV_{g_0} \nonumber \\
&\quad + \int_0^t e^{-C_1 s}\Big(C_3+ C_2|\nabla^m(\chi^2 Rm)|_{(g(s), L^p(\Omega))}^p \Big) ds\Big] \nonumber 
\end{align}

Hence on $[0, T)$,
\begin{equation}\label{w34}
{\int |\nabla^{m+ 1}\chi |^p\leq C+ C\int_0^t |\nabla^m(\chi^2 Rm)|_{(g(s), L^p(\Omega))}^p ds
}
\end{equation}

By (\ref{w32}) and (\ref{w34}), 
\begin{align}
\frac{\partial}{\partial t} |\nabla^m (\chi^2 Rm)|_{(g(t), L^p(\Omega))}^p &\leq C_4 \int_0^t |\nabla^m (\chi^2 Rm)|_{(g(s), L^p(\Omega))}^p ds \nonumber \\
&\quad + C_5 |\nabla^m (\chi^2 Rm)|_{(g(t), L^p(\Omega))}^p + C_6 \label{w35}
\end{align}
By (\ref{w35}) and Lemma \ref{lem W4}, $|\nabla^m (\chi^2 Rm)|_{(g(t), L^p(\Omega))}^p$ is uniformly bounded on $[0, T)$. Hence $(C2)$ is proved. By induction method, $(C1)$ is proved. 
}
\qed

\begin{prop}\label{prop W9}
{If $g(t)$, $t\in [0, T)$ is a solution to the local Ricci flow on $(M^n, g_0)$, and for any $m\geq 0$, $|\nabla^m (\chi^2 Rm)|_{(g(t),\infty)}$ is uniformly bounded on $[0, T)$. Then the local Ricci flow can be extended smoothly through time $T$.
}
\end{prop}

\pf
{Use similar argument in section $7$ of chapter $6$ of \cite{RI}, the difference is that $\nabla^m (\chi^2 Rm)$ plays the key role in our local Ricci flow case instead of $\nabla^m Rm$ in Ricci flow case.
}
\qed

\begin{theorem}\label{thm W10}
{If $g(t)$, $t\in [0, T)$, $0< T< +\infty$ is a smooth solution of the local Ricci flow on $(M^n, g_0)$. Assume $|\chi^2 Rm|_{(g(t), \infty)}$ is uniformly bounded on $[0, T)$, then the local Ricci flow can be extended smoothly through time $T$.
}
\end{theorem}

\pf
{By Theorem \ref{thm W8} and Proposition \ref{prop W9}, we get our conclusion.
}
\qed

\section{Existence time estimates (The first case: $p_0> \frac{n}{2}$)}

In this section, we use the product of two different cut-off functions to do curvature estimates, so we call them ``Local local'' curvature estimates. Also in this section, we assume that $(M^n, g_0)$ is a $n$-dimensional $(n\geq 3)$ complete noncompact Riemannian manifold satisfying the following assumptions:
\begin{equation}\label{5.1}
{\left\{
\begin{array}{rl}
\Big(\fint_{B_x(4r_0)} h^{\frac{2n}{n-2}} dV_{g_0} \Big)^{\frac{n-2}{n}} &\leq  A_0 r_0^2 \fint_{B_x(4r_0)}|\nabla h|^2 dV_{g_0}\ , \\
\Big(\fint_{B_x(r_0)} |Rm(g_0)|^{p_0} dV_{g_0} \Big)^{\frac{1}{p_0}} &\leq K_1
\end{array}\right.
}
\end{equation}
for any $x\in M$ and $h\in C_0^{\infty}(B_x(4r_0))$, where $A_0\geq 1$, $p_0> \frac{n}{2}$, $K_1$, $r_0$ are some positive constants, and $B_x(r)$ is the open geodesic ball of radius $r$ centered at $x$ on $(M, g_0)$. And $\fint$ means the usual average of integrand.

We always assume $\Vert\nabla \chi\Vert_{\infty}\triangleq \Vert\nabla \chi\Vert_{(g_0,\infty)}= \sup_{x\in \Omega}\big(g_0^{ij}\nabla_i \chi \nabla_j \chi\big)^{\frac{1}{2}}\leq 1$. Now we state some lemmas about local Sobolev constants under scaling of metric. There is well-known local Sobolev inequality
\begin{equation}\label{5.2}
{\Big(\fint_{B_x(r)} f^{\frac{2n}{n-2}} dV_{g(t)}\Big)^{\frac{n-2}{n}}\leq  A(B_x(r),t) r^2 \fint_{B_x(r)}|\nabla f|^2 dV_{g(t)}, \quad \forall f\in C_0^{\infty}(B_x(r)) 
}
\end{equation}
with respect to each metric $g(t)$, and $A(B_x(r),t)$ depends on both $t$ and $B_x(r)$. Note in (\ref{5.2}), although $B_x(r)$ is the geodesic ball with respect to $g_0$, $\fint_{B_x(r)} f^{\frac{2n}{n- 2}}dV_{g(t)}\triangleq \frac{1}{V_{g(t)}(B_x(r))}\int_{B_x(r)} f^{\frac{2n}{n- 2}}dV_{g(t)}$. When $r$ is fixed, $A(t)\triangleq \sup_{x\in M^n} A(B_x(r), t)$.

\begin{lemma} \label{lem 5.1}
{If $a^{-1}g_0 \leq g(t) \leq ag_0$, then 
\begin{equation}\nonumber
{a^{-(n+1)}A_0 \leq A(t) \leq a^{n+1} A_0 
}
\end{equation}
where $A_0= A(0)$ is respect to $g_0$, $a\geq 1$ is some positive constant.
}
\end{lemma}

\pf
{It is straightforward from scaling argument and (\ref{5.2}).
}
\qed

\begin{remark}
{We had defined two Sobolev constants, the Sobolev constant for $(\Omega, g(t))$ is $C_s(\Omega, g(t))$ defined in (\ref{2.7}) and local Sobolev constant $A(B_x(r), t)$ is defined in (\ref{5.2}). They are different from each other although they satisfy similar property as in Lemma \ref{lem 5.1}.
}
\end{remark}

\begin{lemma}\label{lem 5.3}
{Assume that the inequality:
\[ \Big(\fint_{B_x(4r_0)} h^{\frac{2n}{n-2}} dV_{g_0} \Big)^{\frac{n-2}{n}} \leq  A_0 r_0^2 \fint_{B_x(4r_0)}|\nabla h|^2 dV_{g_0} \]
where $h\in C_0^{\infty}(B_x(4r_0))$, $A_0\geq 1$ and $r_0$ are fixed positive constants. Then
\[V_{g_0}\big(B_x(\rho)\big) \geq N_1 \Big(\frac{\rho}{4r_0}\Big)^n V_{g_0}\big(B_x(4r_0)\big), \quad 0< \rho\leq 4r_0 \]
where $N_1= 2^{2n-\frac{n^2}{2}}A_0^{-\frac{n}{2}}$ and $N_1\leq 1$.
}
\end{lemma}

\pf
{For simplicity, we use $V(4r_0)$ instead of $V_{g_0}(B_x(4r_0))$ in the proof. By assumption, 
\[\Big( \int_{B(4r_0)}|h|^{\frac{2n}{n- 1}}\Big)^{\frac{n- 2}{2n}}\leq A_0^{\frac{1}{2}}r_0V(4r_0)^{-\frac{1}{n}}\Big(\int_{B(4r_0)}|\nabla h|^2\Big)^{\frac{1}{2}}\]
We recall Theorem $3.1.5$ in \cite{Saloff-1}:

{\it \textbf{Theorem}:}~
{Assume that the inequality 
\[\forall f\in C_0^{\infty}(M)\ , \ \quad ||f||_r\leq (C||\nabla f||_p)^{\theta} ||f||_{s}^{1-\theta}\]
is satisfied for some $r$, $s$, $\theta$ with $0< s\leq r\leq \infty$ and $0< \theta\leq 1$. Assume also that 
\[\frac{\theta}{p}+ \frac{1- \theta}{s}- \frac{1}{r}> 0\]
Then $V(x,\rho)\geq c\rho^{\nu}$ with $\nu$ defined by
\[\frac{\theta}{\nu}= \frac{\theta}{p}+ \frac{1- \theta}{s}- \frac{1}{r}\]
and the constant $c$ given by $c= 2^{-\frac{\nu^2}{\theta r}- \frac{\nu}{\theta}}C^{-\nu}$.
}
\qed

Let $\theta= 1$, $s=r= \frac{2n}{n-2}$, $p= 2$ in the above theorem, $C= \frac{A_0^{\frac{1}{2}} r_0}{V^{\frac{1}{n}}}$, where $V= V(4r_0)$, and $\nu= n$. Then $c= 2^{-\frac{n^2}{2}} (A_0^{\frac{1}{2}}r_0)^{-n}V$, by the above theorem, 
\[V(\rho)\geq c\rho^{\nu}= 2^{2n- \frac{n^2}{2}}A_0^{-\frac{n}{2}}\Big(\frac{\rho}{4r_0}\Big)^n V\geq N_1\Big(\frac{\rho}{4r_0}\Big)^n V(4r_0)\]
}
\qed

\begin{lemma}\label{lem 5.4}
{$(M^n, g)$ is a smooth, complete Riemannian manifold with the property:
\[V_{g}\big(B_x(\rho)\big) \geq N_1 \Big(\frac{\rho}{4r_0}\Big)^n Vol\big(B_x(4r_0)\big), \quad 0< \rho\leq 4r_0, \quad \forall x\in M \]
where $r_0$ and $N_1$ are fixed positive constants. Then there exists a sequence $(x_i)$ of points of $M$ such that \\
(I) $M= \bigcup_{i=1}^{\infty} B_{x_i}(r_0)$. \\
(II) For any $i\neq j$, $B_{x_i}(\frac{r_0}{2})\cap B_{x_j}(\frac{r_0}{2})= \emptyset$.\\
(III) For any $j$, $B_{x_j}(r_0)$ intersects at most $N$ balls $B_{x_i}(r_0)$, $i\neq j$, where $N= 2^{4n}N_1^{-2}= 2^{n^2}A_0^n$.
}
\end{lemma}

\pf
{Let $X_{r_0}= \{(x_i)_I, x_i\in M$, s.t. $I$ is countable and $\forall i\neq j,\  d_g(x_i, x_j)\geq r_0\}$. As one can easily check, $X_{r_0}$ is partially ordered by inclusion and every chain in $X_{r_0}$ has an upper bound. Hence, by Zorn's lemma, $X_{r_0}$ contains a maximal element $(x_i)$ and $(x_i)$ satisfies (I) and (II). From now on, let $(x_i)$ be such that (I) and (II) are satisfied. For $x\in M$, we define
\[I_{2r_0}(x)= \{i: x\in B_{x_i}(2r_0)\}\]
Then 
\begin{equation}\label{5.3}
V(B_x(2r_0))\geq \frac{N_1 }{2^{n}} V(B_x(4r_0))\geq \frac{N_1 }{2^{n}} \sum_{i\in I_{2r_0}(x)}V(B_{x_i}(\frac{r_0}{2}))
\end{equation}
The last inequality follows from $\bigcup_{i\in I_{2r_0}(x)}B_{x_i}(\frac{r_0}{2})\subseteq B_x(4r_0)$ and $B_{x_i}(\frac{r_0}{2})\cap B_{x_j}(\frac{r_0}{2})= \emptyset$ if $i\neq j$. By the assumption, $V(B_{x_i}(\frac{r_0}{2}))\geq \frac{N_1}{8^n}V(B_{x_i}(4r_0))$. And for $i\in I_{2r_0}(x)$, $B_x(2r_0)\subseteq B_{x_i}(4r_0)$, 
\begin{equation}\label{5.4}
V(B_{x_i}(\frac{r_0}{2}))\geq \frac{N_1}{8^n}V(B_{x}(2r_0))
\end{equation}
By (\ref{5.3}) and (\ref{5.4}), 
\[V(B_x(2r_0))\geq N_1^2 2^{-4n}Card(I_{2r_0}(x))V(B_x(2r_0))\]
where $Card$ stand for the cardinality. For any $x\in M$, $Card(I_{2r_0}(x))\leq 2^{4n}N_1^{-2}$. For fixed $x_j$, if $B_{x_j}(r_0)\cap B_{x_i}(r_0)\neq \emptyset$, then $x_i\in B_{x_j}(2r_0)$, $i\in I_{2r_0}(x_j)$ and we get that $N\leq Card(I_{2r_0}(x_j))\leq 2^{4n}N_1^{-2}$
}
\qed

\begin{lemma}\label{lem 5.5}
{For the covering $\Big(B_{x_i}(r_0)\Big)$ of $(M, g)$ as in Lemma \ref{lem 5.4}, we can find $\xi_i\in C_0^{0, 1}(B_{x_i}(r_0))$, $0\leq \xi_i\leq 1$, $\sum_i \xi_i^2= 1$, $|\nabla \xi_i|^2\leq C(r_0, N)$, where $C(r_0, N)= \frac{8}{r_0^2}(N+ 1)$ and $N$ is as in Lemma \ref{lem 5.4}. 
}
\end{lemma}

\pf
{Let $\rho: [0, \infty) \rightarrow [0, 1]$ be defined by 
\begin{equation}\label{5.5}
\rho(t) = \left\{
\begin{array}{rl}
1                              &  0   \leq  t     \leq \frac{r_0}{2}    \\
(2- \frac{2}{r_0}t)^2   &  \frac{r_0}{2}  \leq  t     \leq r_0    \\
0                              &  r_0 \leq t                                   \\
\end{array} \right.
\end{equation}

Let $\alpha_i(x)= \rho(d_g(x, x_i))$, where $d_g$ denotes the distance associated to $g$ and $x\in M$. Clearly, $\alpha_i$ is Lipschitz with compact support. Note for any $m$, $|\nabla \alpha_m|\leq 2(2- \frac{2}{r_0}t)\frac{2}{r_0}= \frac{4}{r_0}\alpha_m^{\frac{1}{2}}$, that is $\frac{|\nabla \alpha_m|^2}{|\alpha_m|}\leq \Big(\frac{4}{r_0}\Big)^2$. Let $\eta_i= \frac{\alpha_i}{\sum_m \alpha_m}$. Then $\eta_i$ is a partition of unity subordinate to the covering $\Big(B_{x_i}(r_0)\Big)$.
\begin{align} 
|\nabla \eta_i| &\leq \Big|\frac{\nabla \alpha_i}{(\sum_m \alpha_m)}\Big|+ \frac{|\alpha_i|\cdot |\sum_m \nabla \alpha_m|}{(\sum_m \alpha_m)^2} \leq |\nabla \alpha_i|+ \frac{4\eta_i}{r_0}\cdot\frac{\sum_m \alpha_m^{\frac{1}{2}}}{\sum_m \alpha_m} \nonumber \\
&\leq |\nabla \alpha_i|+ \frac{4\eta_i}{r_0}\sqrt{N} \nonumber
\end{align}
in the last equality, we use Cauchy-Schwartz inequality and property (III) in Lemma \ref{lem 5.4}.

Let $\xi_i= \eta_i^{\frac{1}{2}}$, then $\xi_i\in C_0^{0, 1}(B_{x_i}(r_0))$, $0\leq \xi_i\leq 1$, $\sum_i \xi_i^2= 1$. Hence
\[|\nabla \xi_i|^2= \frac{1}{4}\cdot \frac{|\nabla \eta_i|^2}{|\eta_i|}\leq \frac{1}{4|\alpha_i|}\cdot 2\Big(|\nabla \alpha_i|^2+ \big(\frac{4}{r_0}\eta_i\sqrt{N}\big)^2\Big)\leq \frac{8}{r_0^2}(N+ 1)\]
}
\qed

For simplicity reason, we use notation $B_i$ to replace $B_{x_i}(r_0)$ in the rest of this section. Note that $(x_i)$ are choosen as in Lemma \ref{lem 5.4} and are fixed in the rest of this section and section $6$. We also use $\xi$ to replace $\xi_i$ in some proofs of this section, it will be clear from the context when we do such replacement. We use the following notation in the rest of this section:
\[\fint h dV_{g(t)}\triangleq \frac{1}{V_{g(t)}(B_i)} \int_{B_i\cap\Omega} h dV_{g(t)}\]
for any $h\in C_{0}^{\infty}(B_i)$, and it will be clear from the context which $B_i$ we choose.

From Theorem \ref{thm 2.7} and DeTurck's trick, local Ricci flow (\ref{2.1}) has a smooth solution on a sufficiently small time interval starting at $t=0$. By (\ref{5.1}), we can assume that $[0, T_{\max})$ is the maximal time interval, on which local Ricci flow (\ref{2.1}) has a smooth solution and such that the following hold for each metric $g(t)$ (where $g(0)= g_0$):
\begin{equation}\label{5.6}
\Big(\fint_{B_x(4r_0)} h^{\frac{2n}{n-2}} dV_{g(t)} \Big)^{\frac{n-2}{n}}\leq  4A_0 r_0^2 \fint_{B_x(4r_0)}|\nabla h|^2 dV_{g(t)}\ ;
\end{equation}
\begin{equation}\label{5.7}
\frac{1}{2}g_0 \leq g(t) \leq 2g_0\ ;
\end{equation}
\begin{equation}\label{5.8}
\Big(\frac{1}{V_{g(t)}(B_i)}\int_{B_i \cap \Omega} |Rm(g(t))|^{p_0}dV_{g(t)}\Big)^{\frac{1}{p_0}}\leq (2N)^{\frac{1}{p_0}} K_1 \quad i= 1, 2, 3, \cdots
\end{equation}
for any $x\in M$, $h\in C_0^{\infty}(B_x(4r_0))$ and $B_i$ are chosen as in Lemma \ref{lem 5.4}.

In the following lemma, we get the parabolic version of energy estimates on $[0, T_{\max})$ under local Ricci flow.

\begin{lemma}\label{lem 5.6}
{For $0\leq t< T_{\max}$,  any $i$, $q\geq 1$, $p\geq \frac{n}{2}$, $p'\geq 0$ and $p\geq p'$, there exists some positive constant 
\begin{equation}\label{5.9}
{C_1(A_0,K_1,n,p_0,r_0)= C(n,p_0) A_0^{\frac{(2p_0+ 3)n}{2p_0- n}}K_1^{\frac{2p_0}{2p_0- n}} \Big[r_0^{-2}+ r_0^{\frac{2n}{2p_0- n}} \Big]
}
\end{equation}
such that:
\begin{equation}\label{5.10}
{\left.
\begin{array}{rl}
& \frac{\partial}{\partial t}\Big(\fint \xi_i^{2q} \chi^{2p'}|Rm|^p dV_{g(t)}\Big)+ \frac{1}{10} 
 \Big(\fint \big|\nabla (\xi_i^{q} \chi^{p'+1}|Rm|^{\frac{p}{2}}) \big|^2 dV_{g(t)} \Big)\\
& \leq  C_1(A_0,K_1,n,p_0,r_0) p^{\frac{n}{2p_0- n}+ 3} q^2 \Big( \fint \xi_i^{2q- 2}\chi^{2p'}|Rm|^p dV_{g(t)} \Big)
\end{array}\right.
}
\end{equation}
}
\end{lemma}

\pf
{Set $f(x,t)=|Rm(x)|_{g(t)}$, we get the following:
\begin{equation}\label{5.11}
{\frac{\partial}{\partial t}\Big(\fint \xi^{2q} \chi^{2p'}f^p \Big) \leq \sum_{k=1}^{7}I_k  
}
\end{equation}

\noindent
where
\[ I_1 = p\fint \xi^{2q} \chi^{2p'+2}f^{p-1}\Delta f , \quad I_2 = p\fint \xi^{2q} \chi^{2p'+2}f^{p-2}|\nabla f|^2 \]
\[ I_3 = -(1- \epsilon)p\fint \xi^{2q} \chi^{2p'+2}f^{p-2} |\nabla Rm|^2,\quad I_4 = 10p \fint \xi^{2q} \chi^{2p'+2}f^{p+1}- \fint \xi^{2q} \chi^{2p'+ 2}f^{p} R \]
\[ I_5 = \Big(\frac{10^5}{\epsilon} \Big)np \fint \xi^{2q} \chi^{2p'}|\nabla \chi|^2 f^{p}, \quad I_6 = 8p \fint \xi^{2q} \chi^{2p'+1}f^{p-2} g^{ri}g^{sj}g^{pk}g^{ql} R_{rspq} R_{jk} \chi_{il}  \]
\[I_7= n(n- 1)\Big( \fint\xi^{2q} \chi^{2p'} f^p \Big) \cdot \Big(\frac{1}{V_{g(t)}(B_i)}\int_{B_i}\chi^2 f\Big) \]
For simplification, we set the following notations:
\begin{equation}\nonumber
{\left.
\begin{array}{rl}
& \tilde{a}= \xi^{q}\chi^{p'}f^{\frac{p}{2}} |\nabla \chi|; \quad \tilde{b}= \xi^{q}\chi^{p'+1} f^{\frac{p}{2}-1}|\nabla f|;\\
& \tilde{c}= \xi^{q}\chi^{p'+1} f^{\frac{p}{2}-1}|\nabla Rm|; \quad \tilde{d}= \fint \big|\nabla (\xi^{q}\chi^{p'+1}f^{\frac{p}{2}}) \big|^2; \quad \tilde{e}= \xi^{q- 1}\chi^{p'+ 1} f^{\frac{p}{2}}|\nabla \xi|
\end{array}\right.
}
\end{equation}

Then
\begin{equation}\nonumber
{\left.
\begin{array}{rl}
I_1 \leq -p(p-1)\fint \tilde{b}^2+ 2\epsilon p\fint \tilde{b}^2+ \frac{pq^2}{\epsilon}\fint \tilde{e}^2+ \frac{p(p'+ 1)^2}{\epsilon}\fint \tilde{a}^2
\end{array}\right.
}
\end{equation}
\begin{equation}\nonumber
{I_2\leq p\fint \tilde{b}^2, \quad I_3= -(1- \epsilon)p\fint \tilde{c}^2, \quad I_5\leq \Big(\frac{10^5}{\epsilon} \Big)np\fint \tilde{a}^2 
}
\end{equation}
where $0< \epsilon< \frac{1}{8}$ is some constant to be determined later.

Using integration by parts and Young's inequality, 
\begin{equation}\nonumber
{I_6\leq \epsilon p\fint \tilde{c}^2+ \frac{1}{\epsilon}10^6 n^3 p^3 q^2\Big( \fint \tilde{a}^2+ \fint \tilde{e}^2\Big) 
}
\end{equation}

And
\begin{align}
I_7&\leq n(n- 1)\fint \xi^{2q}\chi^{2p'}f^p \cdot \Big(\frac{1}{V_{g(t)}(B_i)}\int_{B_i\cap\Omega} f^{p_0}\Big)^{\frac{1}{p_0}} \nonumber \\
&\leq C(n, p_0)A_0^{\frac{n}{p_0}}K_1\Big(\fint \xi_i^{2q- 2} \chi^{2p'} f^{p} dV_{g(t)} \Big) \nonumber
\end{align}
in the last inequality above, we use the fact that $N= 2^{n^2}A_0^n$ by Lemma \ref{lem 5.3} and Lemma \ref{lem 5.4}.

Note that $|R|\leq n|Rm|\leq 2p f$, $p_0'< \frac{n}{n- 2}$ where $\frac{1}{p_0}+ \frac{1}{p_0'}= 1$,
\begin{equation}\nonumber
{\left.
\begin{array}{rl}
I_4&\leq 12p \fint \xi^{2q} \chi^{2p'+2}f^{p+1}\leq 12p \nparallel f\parallel_{p_0, B_i\cap \Omega}\cdot \Big[\fint \big(\xi^{2q}\chi^{2p'+ 2}f^p\big)^{p_0'} \Big]^{\frac{1}{p_0'}} \\
& \leq 12p (2N)^{\frac{1}{p_0}} K_1 \cdot \Big[C(\epsilon, p_0, n)(2N p K_1 A_0 r_0^2)^{\frac{n}{2p_0- n}} \fint \xi^{2q}\chi^{2p'+ 2}f^p \\
&\quad + \frac{\epsilon}{2N p K_1 A_0 r_0^2}\Big(\fint \big(\xi^{2q}\chi^{2p'+ 2}f^p\big)^{\frac{n}{n- 2}} \Big)^{\frac{n- 2}{n}} \Big] \\
& \leq 100 \epsilon \tilde{d}+ C(\epsilon,n,p_0) p^{\frac{n}{2p_0- n}+ 1} q^2 A_0^{\frac{n^2+ 3n}{2p_0- n}}K_1^{\frac{2p_0}{2p_0- n}} r_0^{\frac{2n}{2p_0- n}}\fint_{B_i\cap\Omega} \xi^{2q- 2}\chi^{2p'+ 2}f^p
\end{array}\right.
}
\end{equation}
where $\nparallel f\parallel_{p_0, B_i\cap \Omega}= \Big(\frac{1}{V_{g(t)}(B_i)}\int_{B_i\cap\Omega} f^{p_0}\Big)^{\frac{1}{p_0}}$.

Combining the above estimates, note $\tilde{b}\leq \tilde{c}$, then
\begin{equation}\label{5.12}
{\left.
\begin{array}{rl}
\frac{\partial}{\partial t}\Big(\fint \xi^{2q}\chi^{2p'}f^p \Big) &\leq p(1+ 4\epsilon- p)\fint \tilde{b}^2+ 100\epsilon \tilde{d} \\
& + \frac{2}{\epsilon}10^6 n^3 p^3 q^2\Big( \fint \tilde{a}^2+ \fint \tilde{e}^2\Big) \\
& + C(\epsilon ,n,p_0) p^{\frac{n}{2p_0- n}+ 1} q^2 A_0^{\frac{n^2+ 3n}{2p_0- n}}K_1^{\frac{2p_0}{2p_0- n}}(r_0^{\frac{2n}{2p_0- n}}+ 1) \\
&\quad \cdot \Big(\fint \xi^{2q- 2}\chi^{2p'+ 2}f^p \Big)
\end{array}\right.
}
\end{equation}

On the other hand, 
\begin{equation}\nonumber
{\left.
\begin{array}{rl}
\Big|\nabla (\xi^{q}\chi^{p'+1}f^{\frac{p}{2}})\Big|^2 &\leq (q\tilde{e}+ (p'+ 1)\tilde{a}+ \frac{p}{2}\tilde{b})^2 \\
& \leq \big(2+ \frac{1}{2\epsilon}\big)q^2\tilde{e}^2+ \big(2+ \frac{1}{2\epsilon}\big)(p'+ 1)^2\tilde{a}^2+ \big(\frac{1}{4}+ \epsilon \big)p^2\tilde{b}^2
\end{array}\right.
}
\end{equation}

Then 
\begin{equation}\label{5.13}
{\fint \tilde{b}^2 \geq \Big[(\frac{1}{4}+ \epsilon)p^2 \Big]^{-1}\tilde{d}- \Big[\big(\frac{1}{4}+ \epsilon \big)p^2 \Big]^{-1} \Big(2+ \frac{1}{2\epsilon} \Big) (q+ p'+ 1)^2 \fint (\tilde{a}^2+ \tilde{e}^2)
}
\end{equation}

By (\ref{5.12}), (\ref{5.13}), and estimate $\fint \tilde{e}^2$ using $|\tilde{\nabla}\xi_i|^2 \leq C(r_0, N)= \frac{8}{r_0^2}(N+ 1)$ as in Lemma \ref{lem 5.5}, $0\leq \xi_i\leq 1$, $0\leq \chi\leq 1$, also note $1+ 4\epsilon- p< 0$,
\begin{align}
&\frac{\partial}{\partial t}\Big(\fint \xi^{2q}\chi^{2p'}f^p \Big)+ \Big(\frac{4(p- 1- 4\epsilon)}{(1+ 4\epsilon)p}- 100\epsilon \Big) \tilde{d}  \nonumber \\
&\quad \leq  C(\epsilon ,n,p_0) p^{\frac{n}{2p_0- n}+ 3} q^2 A_0^{\frac{(2p_0+ 3)n}{2p_0- n}}K_1^{\frac{2p_0}{2p_0- n}}[r_0^2+ r_0^{\frac{2n}{2p_0- n}}]\fint \xi^{2q- 2}\chi^{2p'}f^p \label{5.14}
\end{align}

Choose $\epsilon= \frac{1}{1000}$ in (\ref{5.14}), then
\begin{equation}\label{5.15}
{\left.
\begin{array}{rl}
& \frac{\partial}{\partial t}\Big(\fint \xi^{2q}\chi^{2p'}f^p \Big)+ \frac{1}{10} \Big(\fint |\nabla(\xi^{q}\chi^{p'+ 1} f^{\frac{p}{2}})|^2 \Big) \\
& \quad \leq C(n,p_0) p^{\frac{n}{2p_0- n}+ 3} q^2 A_0^{\frac{(2p_0+ 3)n}{2p_0- n}}K_1^{\frac{2p_0}{2p_0- n}}\Big[r_0^2+ r_0^{\frac{2n}{2p_0- n}} \Big]\fint \xi^{2q- 2}\chi^{2p'}f^p 
\end{array}\right.
}
\end{equation}

Replace $f$ with $|Rm|$, the lemma is proved.
}
\qed

For simplicity reason, we use $C_1$ intead of $C_1(A_0,K_1,n,p_0,r_0)$ in the rest of this section, similar for $C_2$ etc. In the next lemma, we use the energy estimate and modified Moser iteration to get local $C^0$-estimate of $|\chi^2 Rm(g(t))|$.

\begin{lemma}\label{lem 5.7}
{If $0< t< T_{\max}$, then there exists some positive constant 
\begin{equation}\label{5.16}
{C_2(A_0,K_1,n,p_0,r_0)= C(n, p_0) A_0^{\Big(\frac{3}{2p_0}+ 1\Big)n}\Big(r_0^{\frac{n}{p_0}}K_1\Big)
}
\end{equation}
such that: 
\begin{equation}\label{5.17}
{\left.
\begin{array}{rl}
|\chi^2 Rm(g(t))| \leq C_2 \Big[C_1^{\frac{n+ 2}{2p_0}}t^{\frac{1}{p_0}}+ t^{-\frac{n}{2p_0}}\Big]
\end{array}\right.
}
\end{equation}
}
\end{lemma}

\pf
{Let $f(x,t)=|Rm(x)|_{g(t)}$. Given any $t_1$ and $t_2$ such that $0<t_1 < t_2 < T_{\max}$, set
\begin{equation} \nonumber
\psi(t) = \left\{
\begin{array}{rl}
0                           &  0   \leq  t     \leq t_1     \\
\frac{t- t_1}{t_2- t_1} &  t_1 \leq  t     \leq t_2    \\
1                           &  t_2 \leq t     < T_{\max} \\
\end{array} \right.
\end{equation}

By Lemma \ref{lem 5.6}
\begin{equation}\label{5.18}
{\left.
\begin{array}{rl}
& \frac{\partial}{\partial t}\Big(\fint \xi_i^{2q} \chi^{2p'} f^p dV_{g(t)}\Big)+ \frac{1}{10} 
 \Big(\fint \big|\nabla (\xi_i^{q} \chi^{p'+1} f^{\frac{p}{2}}) \big|^2 dV_{g(t)} \Big)\\
& \quad \leq C_1 p^{\frac{n}{2p_0- n}+ 3} q^2 \Big(\fint \xi_i^{2q- 2} \chi^{2p'} f^{p} dV_{g(t)} \Big)
\end{array}\right.
}
\end{equation}

Multiply (\ref{5.18}) by $\psi(t)$, integrate it from $0$ to $t_0$ with respect to $t$, where $t_2\leq t_0< T_{\max}$. Then
\begin{equation} \label{5.19}
{\left.
\begin{array}{rl}
& \fint (\xi^{2q}\chi^{2p'} f^{p}) dV_{g(t_0)} + \frac{1}{10} \int_{t_2}^{t_0} \fint |\nabla (\xi^q \chi^{p'+ 1} f^{\frac{p}{2}})|^2 \\
& \leq C A_0^{\frac{(2p_0+ 3)n}{2p_0- n}}K_1^{\frac{2p_0}{2p_0- n}} \Big[r_0^2+ r_0^{\frac{2n}{2p_0- n}} \Big]p^{\frac{n}{2p_0- n}+ 3} q^2 \Big(1 + \frac{1}{t_2- t_1}\Big) \int_{t_1}^{t_0} \\
&\quad \cdot \Big( \fint \xi^{2(q-1)} \chi^{2p'} f^{p} \Big) 
\end{array}\right.
}
\end{equation}
Note (\ref{5.19}) is in fact valid for any $0< t_1< t_2\leq t_0< T_{\max}$.

For $0< t\leq t_0$, denote 
\begin{equation} \nonumber
{H(p,p',q,t)= \int_{t}^{t_0} \fint \xi^{2q}\chi^{2p'} f^p 
}
\end{equation}
 
then for any $0< \iota< \iota'\leq t_0< T_{\max}$, using (\ref{5.6}), (\ref{5.19}) and H\"older inequality,
\begin{equation} \nonumber
{\left.
\begin{array}{rl}
& H(p(1+ \frac{2}{n}), p'(1+ \frac{2}{n})+ 1, q(1+ \frac{2}{n}), \iota')= \int_{\iota'}^{t_0} \fint \big( \xi^{2q} \chi^{2p'} f^p\big)^{\frac{2}{n}} \big( \xi^q \chi^{p'+ 1} f^{\frac{p}{2}}\big)^2 \\

&\quad \leq \int_{\iota'}^{t_0} \Big[\fint (\xi^{2q} \chi^{2p'} f^{p}) \Big]^{\frac{2}{n}}\cdot 4A_0 r^2 \Big[ \fint |\nabla (\xi^q \chi^{p'+1} f^{\frac{p}{2}})|^2 \Big] dt\\

&\quad \leq 40A_0r_0^2 \Big[C_1p^{\frac{n}{2p_0- n}+ 3}q^2+ \frac{1}{\iota '- \iota}\Big]^{1+ \frac{2}{n}} H(p,p',q-1,\iota)^{1+ \frac{2}{n}}
\end{array}\right.
}
\end{equation}

Denote $\alpha= \frac{n}{2p_0- n}+ 3$, $\nu= 1+ \frac{2}{n}$, $\eta= \nu^{n+2}$, fix $t\in (0, t_0)$, set 
\begin{equation}\nonumber
{\left.
\begin{array}{rl}
& p_k= p_0 \nu^k \ , \quad p_k'= p_k- \frac{n}{2}\ , \quad p_0'= p_0- \frac{n}{2}\ , \\
& q_k= n\nu^k- (\frac{n}{2}+ 1)\ , \quad q_0= \frac{n}{2}- 1\ , \quad \iota_k= t(1- \eta^{-k})\ , \\
& H_k= H(p_k,p_k',q_k,\iota_k)\ ,\quad  \Phi_k= H_k^{\frac{1}{p_k}}
\end{array}\right.
}
\end{equation}

\begin{equation} \nonumber
{\Phi_0= \Big(\int _0^{t_0} \fint \xi^{n-2}\chi^{2p_0- n} f^{p_0} \Big)^{\frac{1}{p_0}}
}
\end{equation}

Then for any $k\geq 0$,
\begin{equation} \nonumber
{\left.
\begin{array}{rl}
H_{k+1}& = H(p_{k+1},p_{k+1}',q_{k+ 1}, \iota_{k+1})\\
& = H\Big(p_k(1+ \frac{2}{n}), p_k'(1+ \frac{2}{n})+ 1, (q_k+ 1)(1+ \frac{2}{n}), \iota_{k+ 1}\Big) \\

&\leq 40 A_0 r_0^2 \Big[C_1 p_k^{\alpha} (q_k+ 1)^2+ \frac{1}{\iota_{k+ 1}- \iota_k}\Big]^{\nu} \cdot H(p_k,p_k',q_k,\iota_k)^{\nu} \\

&\leq 40 A_0 r_0^2 \Big[C_1 p_0^{\alpha} n^2 \nu^{(\alpha + 2)k}+ t^{-1}\frac{\eta}{\eta - 1}\eta^k \Big]^{\nu} H_k^{\nu} \\

&\leq C(n, p_0)A_0r_0^2 [C_1+ t^{-1}]^{\nu} \nu^{(\alpha + n+ 2)k\nu} H_k^{\nu}
\end{array}\right.
}
\end{equation}

Taking $p_{k+ 1}$-root on both sides of the above inequality,
\begin{equation} \nonumber
{\Phi_{k+1}\leq \Big[C(n, p_0)A_0r_0^2\Big]^{p_0^{-1}\nu^{-(k+ 1)}} \nu^{(\alpha + n+ 2)\frac{k\nu^{-k}}{p_0}} \Big[C_1+ t^{-1}\Big]^{p_0^{-1}\nu^{-k}} \Phi_k
}
\end{equation}

By induction,
\begin{equation} \nonumber
{\Phi_{k+1}\leq \Big[C(n, p_0)A_0r_0^2\Big]^{p_0^{-1}(\sigma_{k+ 1}- 1)} \nu^{(\alpha + n+ 2)\frac{\sigma_k '}{p_0}} \Big[C_1+ t^{-1}\Big]^{p_0^{-1}\sigma_k} \Phi_0
}
\end{equation}
where $\sigma_k= \sum_{i= 0}^{k}\nu^{- i}$ and $\sigma_k'= \sum_{i= 0}^{k}i \nu^{- i}$, note that $\frac{\sigma_{k+ 1}- 1}{p_0}\leq \frac{n}{2p_0}$

Then 
\begin{align}
|\xi_i^{\frac{n}{p_0}} \chi^2 f(x,t)| &\leq \lim_{k\rightarrow \infty} \Phi_{k+1}\nonumber \\
&\leq C(n, p_0)A_0^{\frac{n}{2p_0}} \Big[C_1+ t^{-1}\Big]^{\frac{n+ 2}{2p_0}} r_0^{\frac{n}{p_0}}\cdot \Big(\int_0^{t_0}\fint_{B_i\cap\Omega} \xi_i^{n- 2} \chi^{2p_0- n} f^{p_0} \Big)^{\frac{1}{p_0}} \nonumber
\end{align}

Let $t_0\rightarrow t$ in the above and using (\ref{5.8}), 
\begin{align}
|\xi_i^{\frac{n}{p_0}} \chi^2 f(x,t)| &\leq C(n, p_0)A_0^{\frac{n}{2p_0}} \Big[C_1+ t^{-1}\Big]^{\frac{n+ 2}{2p_0}} r_0^{\frac{n}{p_0}} \Big[(2N)^{\frac{1}{p_0}}K_1\Big] t^{\frac{1}{p_0}} \nonumber \\
&\leq C(n, p_0)A_0^{\frac{3n}{2p_0}} \Big(r_0^{\frac{n}{p_0}}K_1\Big) \Big[C_1^{\frac{n+ 2}{2p_0}} t^{\frac{1}{p_0}}+ t^{-\frac{n}{2p_0}}\Big]   \nonumber 
\end{align}

Now for any $x\in \Omega$, we assume $x\in B_{i_0}\cap\Omega$, note 
$|\xi_i^2\chi^2 f|\leq |\xi_i^{\frac{n}{p_0}}\chi^2 f|$, then
\begin{align}
|\chi^2 f|(x,t)= |\sum_{j= 1}^{N}\xi_j^2 \chi^2 f(x,t)|\leq C(n, p_0)A_0^{\Big(\frac{3}{2p_0}+ 1\Big)n} \Big(r_0^{\frac{n}{p_0}}K_1\Big)\Big[C_1^{\frac{n+ 2}{2p_0}} t^{\frac{1}{p_0}}+ t^{-\frac{n}{2p_0}}\Big] \nonumber 
\end{align} 
Replace $f$ with $|Rm|$, this is our conclusion.
}
\qed

Straightforward from Lemma \ref{lem 5.7}, we have the following two corollaries.

\begin{cor}\label{cor 5.8}
{If $0< t< T_{\max}$, there exists some positive constant 
\begin{align}
C_3(A_0,K_1,n,p_0,r_0)= C(n, p_0)A_0^{\Big(\frac{3}{2p_0}+ 1\Big)n} \Big(r_0^{\frac{n}{p_0}}K_1\Big) \label{5.20}
\end{align} 
such that 
\begin{equation}\label{5.21}
{\left.
\begin{array}{rl}
|\chi^2 Rc(g(t))| \leq C_3 \Big(t^{-\frac{n}{2p_0}}+ C_1^{\frac{n+ 2}{2p_0}} t^{\frac{1}{p_0}} \Big)
\end{array}\right.
}
\end{equation}
}
\end{cor}

\begin{cor}\label{cor 5.9}
{If $0\leq t< T_{\max}$, there exists some positive constant $C$, which is independent of $t$, such that 
\begin{equation}\nonumber
{|\chi^2 Rm(g(t))| \leq C
}
\end{equation}
}
\end{cor}

Choose $T_1> 0$ such that 
\begin{align}
\exp{\Big(C_1p_0^{\frac{n}{2p_0- n}+ 3}NT_1\Big)}= \frac{3}{2} \nonumber
\end{align}
where $C_1$ is from Lemma \ref{lem 5.6} and $N$ is from Lemma \ref{lem 5.4}. Then
\begin{align}
T_1= C(n, p_0)A_0^{-\frac{5p_0n}{2p_0- n}}K_1^{-\frac{2p_0}{2p_0- n}} \Big[r^{-2}+ r_0^{\frac{2n}{2p_0- n}}\Big]^{-1} \label{5.22}
\end{align}

\begin{prop} \label{prop 5.10} 
{If $0\leq t< T_{\max}< T_1$, then
\begin{equation}\nonumber
{\Big(\frac{1}{V_{g(t)}(B_i)} \int_{B_i\cap \Omega} |Rm(g(t))|^{p_0}dV_{g(t)}\Big)^{\frac{1}{p_0}}\leq \Big( \frac{3}{2}N \Big)^{\frac{1}{p_0}} K_1, \ \quad  i= 1, 2, 3, \cdots
}
\end{equation}
}
\end{prop}

\pf
{Set $f(x,t)\doteqdot |Rm(x)|_{g(t)}$. Let $q= 1$, $p'=0$, $p= p_0$ in Lemma \ref{lem 5.6}, then:
\begin{equation}\nonumber
{\frac{\partial}{\partial t}\Big(\fint_{\Omega} \xi_i^{2} f^{p_0} dV_{g(t)}\Big)\leq  C_1 p_0^{\frac{n}{2p_0- n}+ 3} \fint_{B_i\cap \Omega} f^{p_0} dV_{g(t)}
}
\end{equation}

For $\Omega$, define $Cov(\Omega)= \{B_i| B_i\cap \bar{\Omega}\neq \emptyset \}$, note that $Cov(\Omega)$ has only a finite number of elements. We define
\begin{equation}\nonumber
{\phi(t)= \sup_{B_i\in Cov(\Omega)} \Big[ \fint_{B_i\cap \Omega} \xi_i^2 |Rm|^{p_0} dV_{g(t)}\Big]
}
\end{equation}

Assume $\phi(t_0)= \fint_{B_{i_0}\cap \Omega} \xi_{i_0}^2 |Rm|^{p_0} dV_{g(t_0)}$, then 
\begin{equation}\label{5.23}
{\left.
\begin{array}{rl}
\frac{\partial}{\partial t}\phi(t_0) &\leq \frac{\partial}{\partial t} \Big( \fint_{\Omega} \xi_{i_0}^2 f^{p_0}\Big)(t_0) \leq C_1 p_0^{\frac{n}{2p_0- n}+ 3} \fint_{B_{i_0}\cap \Omega} f^{p_0} dV_{g(t_0)} \\
& \leq C_1 p_0^{\frac{n}{2p_0- n}+ 3} \sum_{j= 1}^{N} \Big( \int_{B_{i_0}\cap \Omega} \xi_{j}^2 f^{p_0} dV_{g(t_0)} \Big) \leq C_1 p_0^{\frac{n}{2p_0- n}+ 3} N \phi(t_0)
\end{array}\right.
}
\end{equation}

Because $t_0$ is chosen freely,
\begin{equation}\label{5.24}
{\left.
\begin{array}{rl}
\frac{\partial}{\partial t}\phi(t)\leq \Big[C_1 p_0^{\frac{n}{2p_0- n}+ 3} N \Big]\phi(t)
\end{array}\right.
}
\end{equation}

By (\ref{5.1}), $\phi(0)\leq \sup_{B_i}\fint_{B_i\cap\Omega} |Rm|^{p_0}dV_{g_0}\leq K_1^{p_0}$, then by the definition of $T_1$ and (\ref{5.24}),  
\[\phi(t)\leq \frac{3}{2} K_1^{p_0}\]

Now 
\begin{equation}\nonumber
{\left.
\begin{array}{rl}
\fint_{B_i\cap\Omega}|Rm|^{p_0}dV_{g(t)}= \sum_{j=1}^{N} \fint_{B_i\cap\Omega}\xi_j^2 f^{p_0}\leq N\phi(t)\leq \frac{3}{2}N K_1^{p_0}
\end{array}\right.
}
\end{equation}
}
\qed

Define 
\begin{align}
T_{1, 1}\doteqdot \Big[\frac{(2p_0- n)\ln 2}{8p_0(n+ 1)} C_3^{-1}\Big]^{\frac{2p_0}{2p_0- n}}, \quad T_{1, 2}\doteqdot \Big[\frac{p_0\ln 2}{4(p_0+ 1)(n+ 1)}C_1^{-1}C_3^{-(\frac{n+ 2}{2p_0})}\Big]^{\frac{p_0}{p_0+ 1}} \nonumber
\end{align}
and recall $T_1$ is defined in (\ref{5.22}), 
\begin{align}
T_2\doteqdot \min\{T_1, T_{1, 1}, T_{1, 2}\}\label{5.25}
\end{align}

\begin{prop}\label{prop 5.11}
{If $0\leq t< T_{\max}< T_2$, then 
\begin{equation}\label{5.26}
{2^{- \frac{1}{n+1}}g_0 \leq g(t)\leq 2^{\frac{1}{n+ 1}}g_0
}
\end{equation}
}
\end{prop}

\pf
{Choose $0\leq t_0< T_{\max}< T_2$. then by Corollary \ref{cor 5.8}, we have
\begin{equation}\nonumber
{\left.
\begin{array}{rl}
\int_0^{t_0}|\frac{\partial}{\partial t}g| dt = \int_0^{t_0}|2\chi^2 Rc(g(t))|dt \leq \int_0^{t_0} 2C_3 \Big(t^{-\frac{n}{2 p_0}}+ C_1^{\frac{n+ 2}{2p_0}}t^{\frac{1}{p_0}} \Big) dt\leq \frac{\ln 2}{n+ 1}
\end{array}\right.
}
\end{equation}
By Lemma $6.49$ in \cite{RI}, we get our conclusion.
}
\qed

\begin{prop}\label{prop 5.12}
{If $0\leq t< T_{\max}< T_2$, then 
\begin{equation}\label{5.27}
\Big(\fint_{B_x(4r_0)} h^{\frac{2n}{n-2}} dV_{g(t)} \Big)^{\frac{n-2}{n}}\leq  2A_0 r^2 \fint_{B_x(4r_0)}|\nabla h|^2 dV_{g(t)}, \quad \forall h\in C_0^{\infty}(B_x(4r_0))
\end{equation}
}
\end{prop}

\pf
{By Lemma \ref{lem 5.1} and Proposition \ref{prop 5.11}, we get our conclusion.
}
\qed

\begin{prop}\label{prop 5.13}
{If $T_{\max}< T_2$, then on $[0, T_{\max})$, we have
\begin{equation}\label{5.28}
\left\{
\begin{array}{rl}
\Big(\fint_{B_x(4r_0)} h^{\frac{2n}{n-2}} dV_{g(t)} \Big)^{\frac{n-2}{n}}&\leq  2A_0 r^2 \fint_{B_x(4r_0)}|\nabla h|^2 dV_{g(t)}, \\
2^{- \frac{1}{4}}g_0 &\leq g(t)\leq 2^{\frac{1}{4}}g_0 \\
\Big(\fint_{B_i\cap \Omega} |Rm(g(t))|^{p_0}dV_{g(t)}\Big)^{\frac{1}{p_0}} &\leq \Big( \frac{3}{2}N \Big)^{\frac{1}{p_0}} K_1 \\
|\chi^2 Rm(g(t))| &\leq C
\end{array} \right.
\end{equation}
for any $x\in M$ and $h\in C_0^{\infty}(B_x(4r_0))$, where $C$ is independent of $t$.
}
\end{prop}

\pf
{Combining Propositions \ref{prop 5.10}, \ref{prop 5.11}, \ref{prop 5.12} and Corollary \ref{cor 5.9}, we get our conclusion.
}
\qed

\begin{theorem}\label{thm 5.14}
{Assume $(M^n, g_0)$ is a $n$-dimensional $(n\geq 3)$ complete noncompact Riemannian manifold, which satisfies (\ref{5.1}). Then local Ricci flow (\ref{2.1}) has a smooth solution on $[0, \frac{T_2}{2}]$, where $T_2$ is defined in (\ref{5.25}). Moreover, for $t\in [0, \frac{T_2}{2}]$, the metric satisfies (\ref{5.7}) and the curvature tensors satisfy the bounds (\ref{5.17}) and (\ref{5.21}).
}
\end{theorem}

\pf
{Firstly, by Theorem \ref{thm 2.7} and DeTurck's trick, the equation (\ref{2.1}) has a smooth solution on a sufficiently small time interval starting at $t=0$. Let $[0, T_{\max})$ be a maximal time interval on which (\ref{2.1}) has a smooth solution and such that (\ref{5.6}), (\ref{5.7}), (\ref{5.8}) hold for each metric $g(t)$. We claim that $T_{\max}> \frac{T_2}{2}$, we prove it by contradiction. 

If $T_{\max}\leq \frac{T_2}{2}$, then $T_{\max}< T_2$. By Proposition \ref{prop 5.13}, $|\chi^2 Rm|_{(g(t),\infty)}$ is uniformly bounded on $[0, T_{\max})$. Then by Theorem \ref{thm W10}, we can extend the solution of local Ricci flow to $[0, T_{\max}+ \delta)$, where $\delta> 0$ is some constant. By Proposition \ref{prop 5.13}, we can furthermore assume that (\ref{5.6}), (\ref{5.7}), (\ref{5.8}) hold on $[0, T_{\max}+ \delta)$, so it is the contradiction with the definition of $T_{\max}$. Then we have $T_{\max}> \frac{T_2}{2}$, and (\ref{2.1}) has a smooth solution on $[0, \frac{T_2}{2}]$. We prove the first conclusion and $g(t)$ satisfies (\ref{5.7}).

Note $\frac{T_2}{2}< T_{\max}$, then by Lemma \ref{lem 5.7} and Corollary \ref{cor 5.8}, we get (\ref{5.17}), (\ref{5.21}).
}
\qed

\begin{remark}\label{rem 5.15}
{In fact we have the following conclusion by the observation that the power ration between $K_1$ and $r_0$ is exactly $\frac{p_0}{n}$ in local Ricci flow’s existence time $\frac{T_2}{2}$ (in fact in $T_1$, $T_{1, 1}$ and $T_{1, 2}$ ).

Assume $(M^n, g_0)$ is a $n$-dimensional $(n\geq 3)$ complete noncompact Riemannian manifold, which satisfies:
\begin{equation}\label{5.29}
\left\{
\begin{array}{rl}
Rc \geq 0, \\
\lim_{r\rightarrow \infty} \frac{V(B_{x_0}(r))}{r^n}= \delta > 0, \\
\Big(\int_{M^n} |Rm(g_0)|^{p_0} dV_{g_0}\Big)^{\frac{1}{p_0}} \leq C_0< \infty
\end{array} \right.
\end{equation}
where $x_0\in M^n$ is a fixed point, $p_0 > \frac{n}{2}$ and $C_0$ are positive constants.

Let $\Omega_r \doteqdot B_{x_0}(r)$, $r\geq 1$ is positive constant, then local Ricci flow (\ref{2.1}) with repect to $\Omega_r$ has a smooth solution on $[0, T]$, where $T > 0$ is independent of $r$.

The proof of the above conclusion is similar to Theorem \ref{thm 5.14}, but not using $\{\xi_i\}_{i= 1}^{\infty}$ in the whole procedure.
}
\end{remark}


\section{Existence time estimates (The second case: Scale-Invariant Exponent)}  

In this section, we assume that $(M^n, g_0)$ is a $n$-dimensional $(n\geq 3)$ complete noncompact Riemannian manifold satisfying the following assumptions:
\begin{equation}\label{6.1}
{\left\{
\begin{array}{rl}
V_{g_0}(B_x(\rho)) &\geq \widehat{N}_1 \Big(\frac{\rho}{4r_0}\Big)^n V_{g_0}(B_x(4r_0)), \quad 0<\rho\leq 4r_0, \\
\Big(\int_{B_{x}(r_0)} h^{\frac{2n}{n- 2}} dV_{g_0} \Big)^{\frac{n- 2}{n}} &\leq \widehat{A}_0 \int_{B_{x}(r_0)} |\nabla h|^2 dV_{g_0} , \\
\Big(\int_{B_x(r_0)} |Rm(g_0)|^{\frac{n}{2}} dV_{g_0} \Big)^{\frac{2}{n}} &\leq \Big(\tau n \widehat{A}_0\Big)^{-1} , \\
\Big(\int_{B_x(r_0)} |Rc(g_0)|^{p_0} dV_{g_0} \Big)^{\frac{1}{p_0}} &\leq  \widehat{K}_1
\end{array}\right.
}
\end{equation}
for any $x\in M^n$, $h(x)\in C_0^{\infty} (B_x(r_0))$, where $p_0> \frac{n}{2}$, $\widehat{A}_0\geq 1$, $\widehat{N}_1\leq 1$, $\widehat{K}_1$ and $r_0$ are positive constants, $\tau= 300\widehat{N} p_0$ and $\widehat{N}= 2^{4n}\widehat{N}_1^{-2}\geq 1$ as in Lemma \ref{lem 5.4}, but using $\widehat{N}_1$ not $N_1$.

By the doubling property assumption in (\ref{6.1}), we can choose $B_i$ as in Lemma
\ref{lem 5.4}. By similar argument as in the section $5$, we can assume that $[0, T_{\max})$ is the maximal time interval on which the local Ricci flow (\ref{2.1}) has a smooth solution and the following holds for each metric $g(t)$ (where $g(0) = g_0$):
\begin{align}
\Big(\int_{B_x(r_0)} h^{\frac{2n}{n- 2}} dV_{g(t)}\Big)^{\frac{n- 2}{n}} &\leq 4\widehat{A}_0 \int_{B_x(r_0)} |\nabla h|^2 dV_{g(t)} ; \label{6.2} \\
\frac{1}{2}g_0\leq g(t)\leq 2g_0 ; \label{6.3} \\
\Big(\int_{B_i\cap \Omega} |Rm(g(t))|^{\frac{n}{2}} dV_{g(t)}\Big)^{\frac{2}{n}} &\leq \Big(2\widehat{N}\Big)^{\frac{2}{n}} \Big(\tau n \widehat{A}_0\Big)^{-1}, \quad i=1, 2, 3, \cdots \label{6.4}
\end{align}
for any $x\in M^n$, $h\in C_0^{\infty}(B_x(r_0))$.

\begin{lemma}\label{lem 6.1}
{When $0\leq t< T_{\max}$, for any $i$, $q\geq 1$, $p\geq \frac{n}{2}$, $p'\geq 0$ and $p\geq p'$, there exists constant $\widehat{C}_1(n, \widehat{N}_1, r_0)= C(n, \widehat{N}_1)(r_0^{-2}+ 1)> 0$ such that:
\begin{align}
&\frac{\partial}{\partial t}\Big(\int_{\Omega} \xi_i^{2q} \chi^{2p'} |Rm|^p dV_{g(t)}\Big)+ \theta \Big(\int_{\Omega} |\nabla (\xi_i^q \chi^{p'+ 1} |Rm|^{\frac{p}{2}})|^2 dV_{g(t)} \Big) \nonumber \\
&\quad \leq \frac{1}{\epsilon} \widehat{C}_1(n, \widehat{N}_1, r_0)p^3q^2 \Big(\int_{B_i\cap \Omega} \xi_i^{2q- 2} \chi^{2p'} |Rm|^p dV_{g(t)} \Big) \label{6.5} 
\end{align}
where 
\begin{align}
\theta= \Big(\frac{4(p- 1- 4\epsilon)}{(1+ 4\epsilon)p}\Big)- 48p\widehat{A}_0 |Rm|_{\big(g(t),\  L^{\frac{n}{2}}(B_i\cap \Omega)\big)} \label{6.6}
\end{align}
and $|Rm|_{\big(g(t),\  L^{\frac{n}{2}}(B_i\cap \Omega)\big)}\doteqdot \Big(\int_{B_i\cap \Omega} |Rm(x, t)|^{\frac{n}{2}} dV_{g(t)}\Big)^{\frac{2}{n}}$, $\epsilon> 0$ is any positive constant.
}
\end{lemma}

\pf
{Let $f(x, t)= |Rm(x, t)|_{g(t)}$, 
\begin{align}
\frac{\partial}{\partial t}\Big(\int_{\Omega} \xi^{2q} \chi^{2p'} f^p\Big) \leq \sum_{k= 1}^6 I_k \label{6.7}
\end{align}
where 
\begin{align}
& I_1= p\int \xi^{2q} \chi^{2p'+ 2} f^{p- 1} \Delta f ,\quad I_2= p\int \xi^{2q} \chi^{2p'+ 2} f^{p- 2} |\nabla f|^2 \nonumber \\
& I_3= -(1- \epsilon) p\int \xi^{2q} \chi^{2p'+ 2} f^{p- 2} |\nabla Rm|^2, \quad I_4= 10p \int \xi^{2q} \chi^{2p'+ 2} f^{p+ 1}- \int \xi^{2q} \chi^{2p'+ 2} f^p R \nonumber \\
& I_5= \Big(\frac{10^5}{\epsilon}\Big) np\int \xi^{2q} \chi^{2p'} |\nabla \chi|^2 f^p ,\quad I_6= 8p\int \xi^{2q} \chi^{2p'+ 1} f^{p- 2} g^{ri} g^{sj} g^{pk} g^{ql} R_{rspq} \chi_{il} R_{jk} \nonumber 
\end{align}
For simplification, we set the following notations:
\begin{align}
& \tilde{a}= \xi^{q}\chi^{p'} f^{\frac{p}{2}} |\nabla \chi | ; \quad \tilde{b}= \xi^{q} \chi^{p'+ 1} f^{\frac{p}{2}- 1} |\nabla f|; \nonumber  \\
& \tilde{c}= \xi^{q}\chi^{p'+ 1} f^{\frac{p}{2}- 1} |\nabla Rm | ; \quad \tilde{d}= \int \big|\nabla (\xi^q\chi^{p'+ 1} f^{\frac{p}{2}}) \big|^2 ; \quad  \tilde{e}= \xi^{q- 1} \chi^{p'+ 1} f^{\frac{p}{2}} |\nabla \xi |; \nonumber  
\end{align}

Note $|R|\leq n|Rm|= 2pf$, 
\begin{align}
I_4 &\leq 12p \int \xi^{2q} \chi^{2p'+ 2} f^{p+ 1} \leq 12p |f|_{\big(g(t), L^{\frac{n}{2}}(B_i\cap \Omega)\big)}\cdot 4\widehat{A}_0\cdot \int \big| \nabla (\xi^q \chi^{p'+ 1} f^{\frac{p}{2}})\big|^2 \nonumber \\
& \leq 48p \widehat{A}_0 |f|_{\big(g(t), L^{\frac{n}{2}}(B_i\cap \Omega)\big)} \cdot \tilde{d} \nonumber 
\end{align}
Similar to the proof of Lemma \ref{lem 5.1}, 
\begin{align}
\frac{\partial}{\partial t} \Big(\int_{\Omega} \xi^{2q} \chi^{2p'} f^p\Big) &\leq p(1+ 4\epsilon- p) \int \tilde{b}^2+ 48p\widehat{A}_0 || f||_{\frac{n}{2},\ B_i\cap \Omega} \cdot \tilde{d} \nonumber \\
& \quad + \frac{2}{\epsilon} 10^6n^3p^3q^2 \Big(\int \tilde{a}^2+ \int \tilde{e}^2 \Big) \label{6.8}
\end{align}
Then by (\ref{5.13}), 
\begin{align}
&\frac{\partial}{\partial t}\Big(\int \xi^{2q} \chi^{2p'} f^p\Big)+ \theta \tilde{d} \leq \frac{4}{\epsilon} 10^6n^3p^3q^2 \int (\tilde{a}^2+ \tilde{e}^2) \nonumber \\
&\quad \leq \frac{8}{\epsilon} 10^6n^3 p^3 q^2 \Big(\int_{\Omega} \xi^{2q} \chi^{2p'} |Rm|^p dV_{g(t)}+ \int_{\Omega} \xi^{2q- 2}\chi^{2p'+ 2} |Rm|^p |\nabla \xi|^2 dV_{g(t)} \Big) \nonumber 
\end{align} 
where $\theta$ is defined in (\ref{6.6}). Replace $f$ with $|Rm|$, by Lemma \ref{lem 5.5}, $|\tilde{\nabla} \xi_i|^2\leq \frac{8}{r_0^2} (\widehat{N}+ 1)$, the lemma is proved.
}
\qed

\begin{lemma}\label{lem 6.2}
{When $0< t< T_{\max}$, for $i= 1, 2, 3, \cdots$, there exists a constant $C(n, p_0, \widehat{N}_1)$ such that 
\begin{align}
\int_{\Omega} \xi_i^4 \chi^2 |Rm(x)|^{1+ \frac{n}{2}} dV_{g(t)} \leq C(n, p_0, \widehat{N}_1) \big(r_0^{-2}+ 1\big) \big(\widehat{A}_0\big)^{-\frac{n}{2}} (t+ 1)^2 t^{-1}
\end{align}
}
\end{lemma}

\pf
{Let $f(x, t)= |Rm(x)|_{g(t)}$. In Lemma \ref{lem 6.1}, let $q= 1$, $p'= 0$, $p= \frac{n}{2}$, $\epsilon= \frac{1}{16}$, plug the above values into (\ref{6.6}), we get 
$\theta\geq \frac{1}{6}$, then 
\begin{align}
\frac{\partial}{\partial t}\Big(\int_{\Omega} \xi^2 f^{\frac{n}{2}} dV_{g(t)}\Big)+ \frac{1}{6} \int_{\Omega} |\nabla (\xi \chi f^{\frac{n}{4}})|^2 dV_{g(t)} \leq \frac{1}{\epsilon_0} C(n, \widehat{N}_1)(r_0^{-2}+ 1)\Big(\int_{B_{i}\cap\Omega} f^{\frac{n}{2}} dV_{g(t)} \Big) \nonumber
\end{align}

Multiply the above by $\psi_{t_1, t_2}(t)$, which is define in the proof of Lemma \ref{5.7}. Integrate from $0$ to $t_0$ with respect to $t$, where $t_0$ satisfies $t_2\leq t_0< T_{\max}$. Then let $t_1= \frac{t_0}{4}$, $t_2= \frac{t_0}{2}$, by (\ref{6.4}),
\begin{align}
\int_{\frac{t_0}{2}}^{t_0} \int_{\Omega} |\nabla (\xi \chi f^{\frac{n}{4}})|^2 &\leq C(n, \widehat{N}_1) (r_0^{-2}+ 1) (1+ \frac{1}{t_0}) \int_{\frac{t_0}{4}}^{t_0} \int_{B_i\cap\Omega} f^{\frac{n}{2}} \nonumber \\
& \leq C(n, p_0, \widehat{N}_1) (r_0^{-2}+ 1)\big(\widehat{A}_0\big)^{-\frac{n}{2}} \Big(1+ \frac{1}{t_0}\Big)t_0 \label{6.10}
\end{align} 

Again in Lemma \ref{lem 6.1}, let $q= 2$, $p'= 1$, $p= 1+ \frac{n}{2}$, choose $\epsilon= \frac{1}{16}$, then $\theta\geq \frac{1}{3}> 0$ and 
\begin{align}
\frac{\partial}{\partial t}\Big(\int_{\Omega} \xi^4 \chi^2 f^{1+ \frac{n}{2}} dV_{g(t)}\Big) \leq C(n, \widehat{N}_1) (r_0^{-2}+ 1) \Big(\int_{\Omega} \xi^2 \chi^2 f^{1+ \frac{n}{2}} dV_{g(t)} \Big) \nonumber 
\end{align}

Multiply the above with $\psi$, integrate from $0$ to $t_0$, where $t_2\leq t_0< T_{\max}$, then let $t_1= \frac{t_0}{2}$, $t_2= t_0$, from (\ref{6.2}), (\ref{6.4}) and (\ref{6.10})
\begin{align}
\int_{\Omega} \xi^4 \chi^2 f^{1+ \frac{n}{2}} dV_{g(t_0)} &\leq C(n, \widehat{N}_1) (r_0^{-2}+ 1) \big(1+ \frac{1}{t_0}\big) \Big(\int_{\frac{t_0}{2}}^{t_0} \int_{\Omega} \xi^2 \chi^2 f^{1+ \frac{n}{2}} \Big) \nonumber \\
&\leq C(n, p_0, \widehat{N}_1) (r_0^{-2}+ 1) \big(\widehat{A}_0 \big)^{-\frac{n}{2}} \Big(1+ \frac{1}{t_0}\Big)^2 t_0  \nonumber
\end{align}

Because we choose $t_0$ arbitrarily, for any $0< t< T_{\max}$ 
\begin{align}
\int_{\Omega} \xi^4 \chi^2 f^{1+ \frac{n}{2}} dV_{g(t)} \leq C(n, p_0, \widehat{N}_1) (r_0^{-2}+ 1) \big(\widehat{A}_0\big)^{-\frac{n}{2}} (t+ 1)^2 t^{-1} \nonumber
\end{align}
Replace $f$ with $|Rm|$, we get our conclusion.
}
\qed

\begin{lemma}\label{lem 6.3}
{When $0< t< T_{\max}$, $q\geq 2$, $p\geq \frac{n}{2}$, $p'\geq 0$ and $p\geq p'$, there exists a constant $C(n, p_0, \widehat{N}_1)$ such that 
\begin{align}
& \frac{\partial}{\partial t}\Big(\int_{\Omega} \xi_i^{2q} \chi^{2p'} |Rc|^p\Big)+ \frac{1}{9} \Big(\int_{\Omega} \big| \nabla(\xi_i^q\chi^{p'+ 1} |Rc|^{\frac{p}{2}})\big|^2\Big) \nonumber \\
& \leq C(n, p_0, \widehat{N}_1)p^{2+ \frac{n}{2}} q^2 \big(r_0^{-2}+ 1\big) (t+ 1)^2 t^{-1}\Big( \int_{B_i\cap \Omega} \xi_i^{2(q- 2)} \chi^{2p'} |Rc|^p \Big) \label{6.11}
\end{align}
There also exists $\widehat{C}_2(n, \widehat{N}_1)> 0$ such that:
\begin{align}
&\frac{\partial}{\partial t}\Big(\int_{\Omega} \xi_i^2 \chi^{2p'} |Rc|^{p} dV_{g(t)}\Big) + \theta \Big(\int_{\Omega} \Big|\nabla \big(\xi_i \chi^{p'+ 1} |Rc|^{\frac{p}{2}}\big)\Big|^2 dV_{g(t)} \Big) \nonumber \\
& \quad \leq \frac{1}{\epsilon} \widehat{C}_2(n, \widehat{N}_2) p^3 (r_0^{-2}+ 1) \Big(\int_{B_i\cap \Omega} \chi^{2p'} |Rc|^p \Big) \label{6.12}
\end{align}
where $\theta= \Big[\frac{4(p- 1- 4\epsilon)}{(1+ 4\epsilon)p}- \frac{32p \widehat{N}^{\frac{2}{n}}}{\tau \sqrt{n}} \Big]$ and $\epsilon> 0$ is any positive constant.
}
\end{lemma}

\pf
{Let $f(x, t)= |Rc(x)|_{g(t)}$. By Lemma \ref{lem 3.4} 
\begin{align}
\frac{\partial}{\partial t} \Big(\int_{\Omega} \xi^{2q} \chi^{2p'} f^p \Big)\leq \sum_{k= 1}^{6} I_k \label{6.13}
\end{align}
where
\begin{align}
& I_1= p \int \xi^{2q} \chi^{2p'+ 2} f^{p- 1} \Delta f , \quad I_2= p\int \xi^{2q}\chi^{2p'+ 2} f^{p- 2} |\nabla f|^2 \nonumber \\
& I_3= -(1- \epsilon)p \int \xi^{2q} \chi^{2p'+ 2} f^{p -2} |\nabla Rc|^2, \quad I_4= 2(1+ \sqrt{n})p \int \xi^{2q} \chi^{2p'+ 2} f^p |Rm| \nonumber \\
& I_5= \Big(\frac{10^5}{\epsilon}\Big)np \int \xi^{2q}\chi^{2p'} |\nabla \chi|^2 f^p, \quad I_6= p\int \xi^{2q} \chi^{2p'} g^{ik}g^{jl} R_{kl} J_2 \nonumber 
\end{align}
where $J_2$ in $I_6$ is from (\ref{3.13}).

Similar to the proof of Lemma \ref{thm 1.1}, we set 
\begin{align}
& \tilde{a}= \xi^{q}\chi^{p'} f^{\frac{p}{2}} |\nabla \chi | ; \quad \tilde{b}= \xi^{q} \chi^{p'+ 1} f^{\frac{p}{2}- 1} |\nabla f|; \nonumber  \\
& \tilde{c}= \xi^{q}\chi^{p'+ 1} f^{\frac{p}{2}- 1} |\nabla Rc | ; \quad \tilde{d}= \int \big|\nabla (\xi^q\chi^{p'+ 1} f^{\frac{p}{2}}) \big|^2 ; \quad  \tilde{e}= \xi^{q- 1} \chi^{p'+ 1} f^{\frac{p}{2}} |\nabla \xi |; \nonumber  
\end{align}

Then 
\begin{align}
I_1+ I_2+ I_3+ I_5+ I_6\leq \frac{4(1+ 4\epsilon- p)}{(1+ 4\epsilon )p}\tilde{d}+ \frac{4}{\epsilon}10^6 n^3 p^3 q^2\Big(\int \tilde{a}^2+ \int \tilde{e}^2 \Big) \label{6.14}
\end{align}

Finally for $I_4$, when $q\geq 2$ 
\begin{align}
I_4 &\leq 2(1+ \sqrt{n})p\Big(\int_{\Omega} \xi^4 \chi^2 |Rm|^{1+ \frac{n}{2}}\Big)^{\frac{2}{n+ 2}} \cdot \Big(\int_{B_i\cap \Omega} \xi^{2(q- 2)} \xi^{2p'} f^{p} \Big)^{\frac{2}{n+ 2}}  \nonumber \\
&\quad \quad \cdot \Big[\int_{\Omega} \Big(\xi^q \chi^{p'+ 1} f^{\frac{p}{2}}\Big)^{\frac{2n}{n- 2}}\Big]^{\frac{n- 2}{n+ 2}} \nonumber \\
& \leq \hat{a}\cdot \hat{b}  \nonumber 
\end{align}
in the last inequality we used Lemma \ref{lem 6.2} and the following notations:
\begin{align}
\hat{a}&= C(n, p_0, \widehat{N}_1)p \Big[\big(r_0^{-2}+ 1\big)(t+ 1)^2 t^{-1}\Big]^{\frac{2}{n+ 2}} \Big(\int_{B_i\cap \Omega} \xi^{2(q- 2)} \chi^{2p'} f^p \Big)^{\frac{2}{n+ 2}} \nonumber \\
\hat{b}&= \Big(\int_{\Omega} \Big|\nabla \big(\xi^q \chi^{p'+ 1} f^{\frac{p}{2}}\big)\Big|^2 \Big)^{\frac{n}{n+ 2}} \nonumber \\
s& = \frac{n+ 2}{2}, \quad s'= \frac{n+ 2}{n}, \quad \frac{1}{s}+ \frac{1}{s'}= 1 \nonumber
\end{align}
For any $\delta> 0$, 
\begin{align}
\hat{a}\hat{b}\leq \frac{1}{s}\Big(\hat{a}\delta^{-\frac{1}{ss'}}\Big)^s+ \frac{1}{s'}\Big(\hat{b}\delta^{\frac{1}{ss'}}\Big)^{s'} \leq \hat{a}^s\delta^{-\frac{1}{s'}}+ \hat{b}^{s'}\delta^{\frac{1}{s}} \nonumber
\end{align}
Let $\delta= k_0^{\frac{n+ 2}{2}}$, where $k_0$ is some positive constant which will be determined later.
\begin{align}
I_4\leq k_0^{-\frac{n}{2}} C(n, p_0, \widehat{N}_1)p^{\frac{n+ 2}{2}} \Big[(r_0^{-2}+ 1) (t+ 1)^2 t^{-1} \Big]\Big(\int_{B_i\cap \Omega } \xi^{2(q- 2)} \chi^{2p'} f^{p} \Big)+ k_0 \tilde{d} \label{6.15}
\end{align}
Now we choose $\epsilon= \frac{1}{16}$, $k_0= \frac{1}{9}$ in (\ref{6.14}) and (\ref{6.15}), from (\ref{6.13}) 
\begin{align}
&\frac{\partial}{\partial t}\Big(\int_{\Omega} \xi^{2q} \chi^{2p'} f^p\Big)+ \frac{1}{3}\int_{\Omega} \big|\nabla (\xi^q\chi^{p'+ 1} f^{\frac{p}{2}})\big|^2 \nonumber \\
&\leq C(n, p_0, \widehat{N}_1)p^{2+ \frac{n}{2}} q^2 (r_0^{-2}+ 1) (t+ 1)^2 t^{-1} \Big(\int_{B_i\cap \Omega} \xi^{2(q- 2)} \chi^{2p'} f^p \Big) \nonumber
\end{align}
Hence (\ref{6.11}) is proved.

When $q= 1$, from (\ref{6.2}), (\ref{6.4}) and H\"older inequality, 
\begin{align}
I_4\leq 8(1+ \sqrt{n})p\widehat{A}_0|Rm|_{\big(g(t), \ L^{\frac{n}{2}}(B_i\cap \Omega)\big)}\cdot \tilde{d} \leq \frac{32p\widehat{N}^{\frac{2}{n}}}{\tau \sqrt{n}}\tilde{d} \label{6.16}
\end{align}
From (\ref{6.13}), (\ref{6.14}) and (\ref{6.16}), 
\begin{align}
&\frac{\partial}{\partial t}\Big(\int_{\Omega} \xi_i^2 \chi^{2p'} f^p dV_{g(t)}\Big)+ \theta \Big(\int_{\Omega} \Big|\nabla \big(\xi_i \chi^{p'+ 1} f^{\frac{p}{2}}\big)\Big|^2 dV_{g(t)}\Big) \nonumber \\
& \leq \frac{4}{\epsilon} 10^6 n^3 p^3 \Big(|\nabla \chi|_{\infty}^2 \int_{\Omega} \xi_i^2 \chi^{2p'} f^p+ \int_{B_i\cap \Omega} \chi^{2p'+ 2} f^p |\nabla \xi_i|^2 \Big) \nonumber 
\end{align}
Replace $f$ by $|Rc|$ and simplify it, we get (\ref{6.12}).
}
\qed

Choose $\widehat{T}_1> 0$ such that
\begin{align}
\exp{\Big(16\widehat{C}_2 p_0^3 \widehat{N} (r_0^{-2}+ 1) \widehat{T}_1\Big)}= \frac{3}{2} \label{6.17}
\end{align}

\begin{cor}\label{cor 6.5}
{When $0\leq t< T_{\max}\leq \widehat{T}_1$, 
\begin{align}
\Big(\int_{B_i\cap \Omega} |Rc|^{p_0} dV_{g(t)}\Big)^{\frac{1}{p_0}} \leq \Big(\frac{3}{2}\widehat{N} \Big)^{\frac{1}{p_0}} \widehat{K}_1, \quad i= 1, 2, 3, \cdots \label{6.18}
\end{align}
}
\end{cor}

\pf
{Set $f= |Rc|$, choose $p'= 0$, $p= p_0$ and $\epsilon= \frac{1}{16}$ in (\ref{6.12}), then it is easy to check that $\theta\geq \frac{1}{15}\geq 0$ from the definition of $\theta$ in Lemma \ref{lem 6.3}, then
\begin{align}
\frac{\partial}{\partial t}\int_{\Omega} \xi_i^2 |Rc|^{p_0} \leq 16\widehat{C}_2 p_0^3 (r_0^{-2}+ 1) \Big(\int_{B_i\cap\Omega} |Rc|^{p_0}\Big) \nonumber 
\end{align}

Define $\phi(t)= \sup_{B_i\in Cov(\Omega)} \int_{B_i\cap \Omega} \xi_i^2 |Rc|^{p_0} dV_{g(t)}$, 
\begin{align}
\frac{\partial}{\partial t} \phi(t)\leq 16 \widehat{C}_2 p_0^3 (r_0^{-2}+ 1)\widehat{N} \phi(t) \nonumber
\end{align}

From the last inequality in (\ref{6.1}),
\begin{align}
\phi(t)\leq \exp{\Big(16\widehat{C}_2 p_0^3 \widehat{N} (r_0^{-2}+ 1)t\Big)} \phi(0)\leq \frac{3}{2} \widehat{K}_1^{p_0} \nonumber
\end{align}

Now we have
\begin{align}
\int_{B_i\cap \Omega} |Rc|^{p_0} dV_{g(t)} \leq \widehat{N}\phi(t) \leq \Big(\frac{3}{2}\widehat{N} \Big)\widehat{K}_1^{p_0} \nonumber
\end{align}

The conclusion is proved.
}
\qed

We still use Moser iteration to get local $C^{0}$-estimate $|\chi^2 Rc(g(t))|$.

\begin{prop}\label{prop 6.6}
{If $0< t< T_{\max}\leq \widehat{T}_1$, there exists some constant $\widehat{C}_3(n, \widehat{N}_1, p_0)> 0$ such that
\begin{align}
|\chi^2 Rc(g(t))|\leq \widehat{C}_3 \Big[\big(r_0^{-2}+ 1\big)\widehat{A}_0\Big]^{\frac{n}{2p_0}} \widehat{K}_1 \Big(t^{-\frac{n}{2p_0}}+ t^{\frac{n}{2p_0}}\Big) \label{6.19}
\end{align}
}
\end{prop}

\pf
{Let $f(x, t)= |Rc(x)|_{g(t)}$. From (\ref{6.11}), do similar argument as in Lemma \ref{lem 5.7} except defining $H$ as the following
\begin{align}
H(p, p', q, t)= \int_t^{t_0} \int_{B_i\cap \Omega} \xi^{2q} \chi^{2p'} f^p \nonumber
\end{align}
Denote $\nu= 1+ \frac{2}{n}$, $\eta= \nu^{n+ 2}$ and fix $t\in (0, t_0)$, set
\begin{align}
& p_k= p_0\nu^k, \quad p_k'= p_k- \frac{n}{2}, \nonumber \\
& q_k= 2n\nu^k- (n+ 2), \quad q_0= n- 2, \quad \tau_k= t(1- \eta^{-k}), \nonumber \\ 
& H_k= H(p_k, p_k', q_k, \tau_k), \quad \Phi_k= H_k^{\frac{1}{p_k}} \nonumber
\end{align}
Then 
\begin{align}
\Phi_0= \Big(\int_0^{t_0} \int_{\Omega} \xi^{2(n- 2)}\chi^{2p_0- n} f^{p_0}\Big)^{\frac{1}{p_0}} \nonumber
\end{align}
We can get
\begin{align}
&\Big|\xi_i^{\frac{2n}{p_0}}\chi^2 f(x, t) \Big|\leq \lim_{k\rightarrow \infty} \Phi_{k+ 1} \nonumber \\
&\leq C(n, \widehat{N}_1, p_0)\big(r_0^{-2}+ 1\big)^{\frac{n}{2p_0}} (t_0+ 1)^{\frac{n}{p_0}} \widehat{A}_0^{\frac{n}{2p_0}} t^{-\frac{n+ 2}{2p_0}} \Big(\int_0^{t_0} \int_{B_i\cap\Omega} \xi_i^{2(n- 2)} \chi^{2p_0- n} f^{p_0}\Big)^{\frac{1}{p_0}} \nonumber
\end{align}

Let $t_0\rightarrow t$ in the above and use Corollary \ref{cor 6.5}, 
\begin{align}
\Big| \xi_i^{\frac{2n}{p_0}} \chi^2 f(x, t)\Big|\leq C(n, \widehat{N}_1, p_0) \Big[\Big(r_0^{-2}+ 1\Big) \widehat{A}_0\Big]^{\frac{n}{2p_0}} \widehat{K}_1 \Big(t^{-\frac{n}{2p_0}}+ t^{\frac{n}{2p_0}}\Big) \nonumber
\end{align}

For any $x\in \Omega$, one can assume $x\in B_{i_0}\cap \Omega$, and note $|\xi_i^4\chi^2 f|\leq \Big| \xi_i^{\frac{2n}{p_0}} \chi^2 f\Big|$, then 
\begin{align}
|\chi^2 f|(x, t)&= \Big|\sum_{j= 1}^{\widehat{N}} \xi_j^2 \chi^2 f(x, t)\Big|= \Big|\Big(\sum_{j= 1}^{\widehat{N}}\xi_j^2\Big)\chi^2 f\Big| \nonumber \\
&\leq \widehat{C}(n, \widehat{N}_1, p_0) \Big[\Big(r_0^{-2}+ 1\Big)\widehat{A}_0\Big]^{\frac{n}{2p_0}} \widehat{K}_1 \Big(t^{-\frac{n}{2p_0}}+ t^{\frac{n}{2p_0}} \Big) \nonumber 
\end{align}
Replace $f$ with $|Rc|$, that is our conclusion.
}
\qed

\begin{lemma}\label{lem 6.7}
{When $0< t< T_{\max}$, for $q> 2$, $p\geq \frac{n}{2}$, $p'\geq 0$ and $p\geq p'$, there exists a constant $C(n, p_0, \widehat{N}_1)$ such that 
\begin{align}
&\frac{\partial}{\partial t}\Big(\int \xi_i^{2q} \chi^{2p'} |Rm|^p\Big)+ \frac{1}{9}\Big(\int \Big|\nabla \big(\xi_i^{q} \chi^{p'+ 1} |Rm|^{\frac{p}{2}}\big)\Big|^2\Big) \nonumber \\
& \leq C(n, p_0, \widehat{N}_1) p^{2+ \frac{n}{2}} q^2 (r_0^{-2}+ 1) (t+ 1)^2 t^{-1} \Big(\int_{B_i\cap\Omega} \xi_i^{2(q- 2)} \chi^{2p'} |Rm|^p \Big) \nonumber
\end{align}
}
\end{lemma}

\pf
{ Use Lemma \ref{lem 6.2}, the proof is similar to the proof of (6.11) in Lemma \ref{lem 6.3}.
}
\qed

\begin{lemma}\label{lem 6.8}
{If $0< t< T_{\max}$, there exists some positive constant $\widehat{C}_4(n, \widehat{N}_1, p_0)$ such that 
\begin{align}
|\chi^2 Rm(g(t))|\leq \widehat{C}_4 (r_0^{-2}+ 1)(t^{-1}+ t) \label{6.20}
\end{align}
}
\end{lemma}

\pf
{Similar to the proof of Proposition \ref{prop 6.6}, but replacing $p_0$ with $\frac{n}{2}$ and using Lemma \ref{lem 6.7}, (\ref{6.4}) instead of Lemma \ref{lem 6.3}, Corollary \ref{cor 6.5}.
}
\qed

\begin{cor}\label{cor 6.9}
{If $0< t< T_{\max}$, then there exists some positive constant $C$ independent of $t$ such that 
\begin{align}
|\chi^2 Rm(g(t))|\leq C \nonumber
\end{align}
}
\end{cor}

\pf
{Straightforward from Lemma \ref{lem 6.8}.
}
\qed

Choose $\widehat{T}_2> 0$ such that
\begin{align}
\exp{\Big(2n^3\widehat{C}_1\widehat{N}\widehat{T}_2\Big)}= \frac{3}{2} \label{6.21}
\end{align}
and choose $q= 1$, $p'= 0$, $p= \frac{n}{2}$, $\epsilon= \frac{1}{16}$ in Lemma \ref{lem 6.1}, do similar argument as in Corollary \ref{cor 6.5}, if $0\leq t\leq T_{\max}\leq \widehat{T}_2$,
\begin{align}
\Big(\int_{B_i\cap\Omega} |Rm(g(t))|^{\frac{n}{2}} dV_{g(t)}\Big)^{\frac{2}{n}} \leq \Big(\frac{3}{2}\widehat{N}\Big)^{\frac{2}{n}} \Big(\tau n\widehat{A}_0 \Big)^{-1}, \quad i= 1, 2, 3, \cdots \nonumber 
\end{align}

Define
\begin{align}
\widehat{T}_{3, 1}&= \Big[\frac{(2p_0- n)\ln 2}{32p_0} \widehat{C}_3^{-1}\widehat{K}_1^{-1}\big[(r_0^{-2}+ 1)\widehat{A}_0 \big]^{-\frac{n}{2p_0}}\Big]^{\frac{2p_0}{2p_0- n}}   \nonumber \\
\widehat{T}_{3, 2}&= \Big[\frac{(2p_0+ n)\ln 2}{32p_0} \widehat{C}_3^{-1}\widehat{K}_1^{-1}\big[(r_0^{-2}+ 1)\widehat{A}_0 \big]^{-\frac{n}{2p_0}}\Big]^{\frac{2p_0}{2p_0+ n}}   \nonumber  \\
\widehat{T}_3&= \min\{\widehat{T}_{3, 1}, \ \widehat{T}_{3, 2} \} \label{6.22}
\end{align}

By Proposition \ref{prop 6.6} and similar argument as in Proposition \ref{prop 5.11}, if $0\leq t< T_{\max}\leq \min\{\widehat{T}_1, \widehat{T}_3\}$, then $2^{-\frac{1}{4}}g_0\leq g(t)\leq 2^{\frac{1}{4}}g_0$.

We define 
\begin{align}
\widehat{T}_4= \min\{\widehat{T}_1, \widehat{T}_2, \widehat{T}_3 \} \label{6.23}
\end{align}
By the above argument, similar as Proposition \ref{prop 5.13}, we have

\begin{prop}\label{prop 6.10}
{If $0< T_{\max}\leq \widehat{T}_4$, then on $[0, T_{\max})$, we have
\begin{equation}\label{6.24}
{\left\{
\begin{array}{rl}
\Big(\int_{B_{x}(r_0)} h^{\frac{2n}{n- 2}} dV_{g_0} \Big)^{\frac{n- 2}{n}} &\leq 2\widehat{A}_0 \int_{B_{x}(r_0)} |\nabla h|^2 dV_{g_0} , \\
2^{-\frac{1}{4}}g_0\leq g(t)\leq 2^{\frac{1}{4}}g_0 , \\
\Big(\int_{B_i\cap \Omega } |Rm(g(t))|^{\frac{n}{2}} dV_{g(t)} \Big)^{\frac{2}{n}} &\leq \Big(\frac{3}{2}\widehat{N}\Big)^{\frac{2}{n}} \Big(\tau n \widehat{A}_0\Big)^{-1} , \\
|\chi^2 Rm(g(t))|&\leq C
\end{array}\right.
}
\end{equation}
for any $x\in M^n$ and $h\in C_0^{\infty} (B_x(r_0))$, where $C$ is independent of $t$.
}
\end{prop}

By similar argument to the proof of Theorem \ref{thm 5.14}, we have the following theorem:

\begin{theorem}\label{thm 6.11}
{Assume $(M^n, g_0)$ is a $n$-dimensional $(n\geq 3)$ complete noncompact
Riemannian manifold, which satisfies (\ref{6.1}). Then local Ricci flow (\ref{2.1}) has a
smooth solution on $[0, \frac{\widehat{T}_4}{2}]$. Moreover, for $t \in (0, \frac{\widehat{T}_4}{2}]$, the metric satisfies (\ref{6.3}) and the curvature tensors satisfy the estimates (\ref{6.19}), (\ref{6.20}).
}
\end{theorem}

\section{Short-time existence of the Ricci flow}

In this section we consider the family of local Ricci flows converging to the Ricci flow on a noncompact manifold $M^n$, we can get the short time existence of the Ricci flow on $M^n$ when we have the uniform lower bound of existence time for the family of the local Ricci flows . At many points in this section, we will take a subsequence; to simplify notation, at each stage a sequence such as $\{\chi_k\}$ will be re-indexed to continue to be $\{\chi_k\}$.

{\it \textbf{Proof of Theorem \ref{thm 1.1}}:}~
{Choose 
\begin{equation}\nonumber
{\Omega_i= \{x| \ d_{g_0}(x, \mathbf O)\leq 4^i,\ x\in M \} \quad i=1,2,\cdots
}
\end{equation}
where $\mathbf O$ is some fixed point in $M$. Construct $\chi_i$, which is a smooth cut-off function on $M$, such that 
\begin{equation}\nonumber
\chi_{i+ 1}(x)= \left\{
\begin{array}{rl}
1 \quad  & x\in \Omega_{i} \\
0 \quad  & x\in M\backslash \Omega_{i+ 1}
\end{array} \right.
\end{equation}
and 
\begin{equation}\label{7.1}
{0\leq \chi_i\leq 1, \quad \Vert\nabla \chi_i\Vert_{\infty}\leq 1, \quad \lim_{i\rightarrow \infty}\chi_i(x)=1
}
\end{equation}

Note $T_2$ is independent of $\chi_i$. By Theorem \ref{thm 5.14}, for each $k$, there exists $g_k(t)$ which is the solution of the following local Ricci flow on $[0, \frac{T_2}{2}]$:
\begin{equation}\label{7.2}
\left\{
\begin{array}{rl}
\frac{\partial}{\partial t}g_k &= -2\chi_k^2 Rc, \quad x\in M\\
g_k(x, 0)&= g_0(x), \quad x\in M 
\end{array} \right.
\end{equation}
Then fix $i$, for any $0< t_1< \frac{T_2}{2}$, we have
\begin{equation}\label{7.3}
{\Vert\nabla_k^{m} Rm_k(x, t_1)\Vert_{(g_k(t_1),\infty)}\leq C(m, \Omega_i , t_1),\quad x\in \Omega_i, \quad k\leq i 
}
\end{equation}
where $\nabla_k$ and $Rm_k$ are with respect to $g_k(t)$ and $C_{(m)}^i$ is a sequence of constants independent of $k$. For simplicity, we use the notation $|\nabla_k^m Rm_k(x,t_1)|_{\infty}$ instead of $\Vert\nabla_k^{m} Rm_k(x, t_1)\Vert_{(g_k(t_1),\infty)}$. If $k> i$, on $\Omega_i$, $g_k$ satisfies (\ref{1.1}) which is the Ricci flow. From (\ref{5.17})
\begin{equation}\label{7.4}
{|\chi_k^2 Rm_k(x, t_1)|_{\infty}\leq C_2 \Big(t_1^{-\frac{n}{2p_0}}+ C_1^{\frac{n+ 2}{2p_0}}t^{\frac{1}{p_0}}\Big) \ ,\quad x\in M, \quad \forall k
}
\end{equation}
Note $C_1$, $C_2$ are independent of $k$. Hence by definition of $\chi_i$, 
\begin{equation}\label{7.5}
{|Rm_k(x, t_1)|_{\infty}\leq C_2 \Big(t_1^{-\frac{n}{2p_0}}+ C_1^{\frac{n+ 2}{2p_0}}t^{\frac{1}{p_0}}\Big)  \ ,\quad x\in \Omega_i\ , \ k> i 
}
\end{equation}

Then by Theorem $14.14$ in \cite{RFT(II)},
\begin{equation}\label{7.6}
{\left.
\begin{array}{rl}
|\nabla_k^m Rm_k(x, t_1)|_{\infty}& \leq C(A_0,K_1,n,m,p_0,r_0,t_1,\Omega_i)t_1^{-\frac{m}{2}} \\
&\quad = C(A_0,K_1,n,m,p_0,r_0,t_1,\Omega_i)
\end{array}\right.
}
\end{equation}
for any $x\in \Omega_i$ and $k> i$.

By (\ref{7.3}) and (\ref{7.6}),  
\begin{equation}\label{7.7}
{|\nabla_k^m Rm_k(x, t_1)|_{\infty}\leq C_{(m)}^{i} \quad x\in \Omega_i, \ \forall k\geq 0
}
\end{equation}
where $C_{(m)}^i$ is a sequence of constants independent of $k$.

When $t\in [0, \frac{T_2}{2}]$, by Theorem \ref{thm 5.14} 
\begin{equation}\label{7.8}
{\frac{1}{2} g_0\leq g_k(t)\leq 2 g_0,
}
\end{equation}

then
\begin{equation}\nonumber
{C_1\sqrt{\det{g_0}}\leq \sqrt{\det{g_k(t_1)}}\leq C_2\sqrt{\det{g_0}}
}
\end{equation}

By Lemma $5.2$ of \cite{CRI}, there exists some $\tau_0> 0$, such that
\begin{equation}\label{7.9}
{inj_{g_k(t_1)}(\mathbf O)\geq \tau_0
}
\end{equation}

By (\ref{7.7}) and (\ref{7.9}), Theorem $3.9$ in \cite{RFT(I)} applies for $(\Omega_i, g_k(t_1), \mathbf O)_{k\in \mathbb N}$, then
\begin{equation}\nonumber
{\lim_{k\rightarrow \infty}g_k(x, t_1)= g_{\infty}(x, t_1) 
}
\end{equation}
where the limit is in $C^{\infty}$ sense on $\Omega_i$. Hence
\begin{equation}\label{7.10}
{|\widehat{\nabla}^m g_k(x, t_1)|_{\infty}\triangleq |\nabla_{g_0}^m g_k(x, t_1)|_{\infty}\leq C_{(m)}^i \, \quad x\in \Omega_i
}
\end{equation}
where $C_{(m)}^i$ is a sequence of constants independent of $k$.

From Theorem $14.14$ in \cite{RFT(II)} and the argument of getting (\ref{7.6}), when $k> i$, we have
\begin{equation}\label{7.11}
{|\nabla_k^m Rm_k(x, t)|_{\infty}\leq C_{(m)}^i\ , \quad (x, t)\in \Omega_i\times [t_1, \frac{T_2}{2}]
}
\end{equation}

If $k\leq i$, it is easy to get (\ref{7.11}) is also true. So for any $k$, (\ref{7.11}) is true, where $C_{(m)}^i$ is independent of $k$.

Then by (\ref{7.7}), (\ref{7.10}), (\ref{7.11}) and Lemma $3.11$ in \cite{RFT(I)},
\begin{equation}\nonumber
{\Big|\frac{\partial^q}{\partial t^q} \widehat{\nabla}^m g_k(x, t) \Big|_{\infty}\leq C_{(m, q)}^i, \quad (x, t)\in \Omega_i\times [t_1, \frac{T_2}{2}], \quad \forall k\geq 0
}
\end{equation}
where $C_{(m, q)}^i$ is a sequence of constants independent of $k$.

Now by Arzela-Ascoli theorem, we get
\begin{equation}\nonumber
{\lim_{k\rightarrow \infty}g_k(x,t)= g_{\infty}(x,t) \quad (x, t)\in \Omega_i\times [t_1, \frac{T_2}{2}]
}
\end{equation}
where the limit is in $C^{\infty}$ sense. Because $i$ and $t_1$ are arbitrary, by the diagonal method, 
\begin{equation}\label{7.12}
{g(x,t)\triangleq g_{\infty}(x,t)= \lim_{k\rightarrow \infty}g_k(x,t)  \quad (x, t)\in M\times (0, \frac{T_2}{2}]
}
\end{equation}

Then
\begin{equation}\nonumber
\left.
\begin{array}{rl}
|\lim_{t\rightarrow 0}\big(g(x,t)- g_0(x)\big)|&= |\lim_{t\rightarrow 0}\lim_{k\rightarrow \infty} \big(g_k(x,t)- g_k(x,0)\big)|\\
&= |\lim_{t\rightarrow 0}\lim_{k\rightarrow \infty} \int_0^t \frac{\partial}{\partial s}g_k(x,s)ds|\\
&\leq \lim_{t\rightarrow 0}\lim_{k\rightarrow \infty} \int_0^t |2\chi_k^2 Rc_k|_{\infty}\\
& \leq C\lim_{t\rightarrow 0} (t^{1- \frac{n}{2p_0}}+ t^{1+ \frac{1}{p_0}})= 0
\end{array} \right.
\end{equation}
In the last inequality above, we used (\ref{5.21}). Hence we can define
\begin{equation}\label{7.13}
{g(x,0)= g_0(x)  \quad x\in M^n
}
\end{equation}

Then by (\ref{7.2}), (\ref{7.12}) and (\ref{7.13}), we get $g(t)$ is the solution of the Ricci flow (\ref{1.1}) on $M^n\times [0, T]$. This is the first conclusion of theorem \ref{thm 1.1}; it remains to prove the estimates (\ref{1.3}).

Now by (\ref{7.5}) and (\ref{7.12}), 
\begin{equation}\nonumber
{|Rm_g(x,t_1)|_{\infty}\leq \lim_{k\rightarrow \infty}|Rm_k(x,t_1)|_{\infty}\leq C_2 \Big(t_1^{-\frac{n}{2p_0}}+ C_1^{\frac{n+ 2}{2p_0}}t_1^{\frac{1}{p_0}} \Big) 
}
\end{equation}
for any $x\in M^n$ and $t_1\in (0, \frac{T_2}{2}]$.

We take $(M^n, g(\frac{t_1}{2}))$ as the initial conditions of the Ricci flow, then the Ricci flow has solution on $M^n\times [\frac{t_1}{2}, \frac{T_2}{2}]$ and 
\begin{equation}\label{7.14}
{\Big|Rm_{g(\frac{t_1}{2})} \Big|_{\infty}\leq C(A_0,K_1,n,p_0,r_0) (t_1^{-\frac{n}{2p_0}}+ t_1^{\frac{1}{p_0}}) 
}
\end{equation}

Now define 
\begin{equation}\label{7.15}
{\left.
\begin{array}{rl}
\tilde{g}(x,t)= \Big[C(t_1^{-\frac{n}{2p_0}}+ t_1^{\frac{1}{p_0}})  \Big]\cdot g\Big(x, \frac{t_1}{2}+ t \Big[C\Big(t_1^{-\frac{n}{2p_0}}+ t_1^{\frac{1}{p_0}}\Big) \Big]^{-1} \Big)
\end{array}\right.
}
\end{equation}
where $C= C(A_0,K_1,n,p_0,r_0)$ in (\ref{7.15}) is the same as in (\ref{7.14}). Then from scaling argument,
\begin{equation}\nonumber
{\left\{
\begin{array}{rl}
\frac{\partial}{\partial t}\tilde{g}_{ij}(x,t)&= -2\tilde{R}_{ij}(x,t)  \\
\tilde{g}(x,0)&= \Big[ C(A_0,K_1,n,p_0,r_0)(t_1^{-\frac{n}{2p_0}}+ t_1^{\frac{1}{p_0}})  \Big] g(x,\frac{t_1}{2}) 
\end{array}\right.
}
\end{equation}

We already know that $g_{ij}$ and its Ricci flow exist on $[\frac{t_1}{2}, \frac{T_2}{2}]$, $\tilde{g}_{ij}$ and its Ricci flow exists on $\Big[0, (\frac{T_2}{2}- \frac{t_1}{2})C(A_0,K_1,n,p_0,r_0) (t_1^{-\frac{n}{2p_0}}+ t_1^{\frac{1}{p_0}})  \Big]$. We use $\{\tilde{R}_{ijkl}(x)\}$ and $\tilde{\nabla}$ to denote, repectively, the Riemannian curvature tensor and the covariant derivative with respect to $\tilde{g}_{ij}(x, 0)$.

Note $|\widetilde{Rm}|\leq 1$ on $M$, by Lemma $7.1$ of \cite{Shi}, there exists a positive constant $C(n,m)$, such that for any $\ m\geq 0$, if 
\begin{equation}\nonumber
{t\in \Big(0, (\frac{T_2}{2}- \frac{t_1}{2})C(A_0,K_1,n,p_0,r_0)\Big (t_1^{-\frac{n}{2p_0}}+ t_1^{\frac{1}{p_0}}\Big) \Big]
}
\end{equation}
we have 
\begin{equation}\label{7.16}
{|\tilde{\nabla}^m\widetilde{Rm}(x,t)|^2\leq \frac{C(n,m)}{t^m} 
}
\end{equation}
by (\ref{7.15}) and (\ref{7.16}), for $\frac{t_1}{2}< t\leq \frac{T_2}{2}$,
\begin{equation}\nonumber
{\left.
\begin{array}{rl}
|\nabla^m Rm(x,t)|^2&\leq \frac{C(n,m) \Big[ C(A_0,K_1,n,p_0,r_0) (t_1^{-\frac{n}{2p_0}}+ t_1^{\frac{1}{p_0}})  \Big]^{m+ 2}} {\Big[ C(A_0,K_1,n,p_0,r_0) (t_1^{-\frac{n}{2p_0}}+ t_1^{\frac{1}{p_0}}) (t- \frac{t_1}{2}) \Big]^{m}} \\
& \leq \frac{C(A_0,K_1,m,n,p_0,r_0)}{(t- t_1)^m}(t_1^{-\frac{n}{2p_0}}+ t_1^{\frac{1}{p_0}})^2 
\end{array}\right.
}
\end{equation}

Then
\begin{equation}\nonumber
{|\nabla^m Rm(x,t_1)|^2\leq \frac{C(A_0,K_1,m,n,p_0,r_0)}{t_1^m}(t_1^{-\frac{n}{2p_0}}+ t_1^{\frac{1}{p_0}})^2, \quad m\geq 0
}
\end{equation}
Because $t_1$ is chosen arbitrarily, 
\begin{equation}\nonumber
{|\nabla^m Rm(x,t)|\leq \frac{C(A_0,K_1,m,n,p_0,r_0)}{t^{\frac{m}{2}}}(t^{-\frac{n}{2p_0}}+ t^{\frac{1}{p_0}}), \quad for\ \ 0< t\leq \frac{T_2}{2}, \quad m\geq 0
}
\end{equation}
}
\qed

\begin{remark}\label{rem 7.1}
{In fact, the assumption about the integral of curvature tensor in the
above theorem can be weakened as
\begin{align}
\Big(\fint_{B_{x_i}(r_0)} |Rm|^{p_0} \Big)^{\frac{1}{p_0}} \leq K_1 \nonumber
\end{align}
where $B_{x_i}(r_0)$ is chosen as in Lemma \ref{lem 5.4}.
} 
\end{remark}

\begin{remark}\label{rem 7.2}
{By Remark \ref{rem 5.15} and the argument in Theorem \ref{thm 1.1}, if $(M^n, g_0)$ satisfies (\ref{5.29}), the Ricci flow (\ref{1.1}) has a smooth solution on $[0, T]$, where $T > 0$ is some positive constant.
} 
\end{remark}

{\it \textbf{Proof of Corollary \ref{cor 1.2}}:}~
{Because $Rc(g_0)\geq -Kg_0$, by Theorem $3.1$ in \cite{Saloff} and Corollary $1.1$ in \cite{LiSchoen}, $A_0\leq C(n, K, r_0)$, then apply Theorem \ref{thm 1.1}, we get our conclusion.
}
\qed

We also have the following corollary which is part of Shi's Theorem.
\begin{cor}\label{cor 7.3}
{Let $(M, g_{0})$ be an $n$-dimensional complete noncompact Riemannian manifold with its Riemannian curvature tensor $\{R_{ijkl}\}$ satisfying $|Rm|\leq k_0$ on $M$, where $0< k_0< +\infty$ is a constant. Then there is a constant $T(n, k_0)> 0$ depending only on $n$ and $k_0$ such that the evolution equation (\ref{1.1}) has a smooth solution $g_{ij}(x,t)$ on $[0,\ T(n,k_0)]$.
}
\end{cor}

\pf
{By $|Rm|\leq k_0$, there exists $K= nk_0$ such that 
\[Rc(g_0)\geq -Kg_0\]
and it is obvious
\begin{equation}\nonumber
{\Big( \fint_{\tilde{B}_{x}(1)}|Rm(g_0)|^{n} dV_{g_0} \Big)^{\frac{1}{n}} \leq k_0 
}
\end{equation}

Now we choose 
\[K= nk_0, \ K_1= k_0,\ p_0= n,\ r=1 \]
then by all the above choice of $K$, $K_1$, $p_0$ and $r$, (\ref{1.5}) in Corollary \ref{cor 1.2} are satisfied. We get our conclusion.
}
\qed

By Theorem \ref{thm 6.11}, we can use similar argument in the proof of Theorem \ref{thm 1.1} to get the following theorem.

\begin{theorem}\label{thm 7.4}
{Assume $(M^n, g_0)$ is a $n$-dimensional $(n\geq 3)$ complete noncompact Riemannian manifold satisfying (\ref{6.1}). Then the Ricci flow (\ref{1.1}) has a smooth solution $g_{ij}(x, t)$ on $[0, \widehat{T}_4]$ (where $\widehat{T}_4$ is defined in (\ref{6.23})) and satisfies the following estimates. For any integer $m\geq 0$, there exists a positive constant $C(\widehat{A}_0, \widehat{K}_1, \widehat{N}_1, m, n, r_0)$ such that 
\begin{align}
\sup_{x\in M^n} |\nabla^m Rm(x, t)|\leq \frac{C(\widehat{A}_0, \widehat{K}_1, \widehat{N}_1, m, n, r_0)}{t^{\frac{m}{2}}} (t^{-1}+ t), \quad 0< t\leq \frac{\widehat{T}_4}{2} \nonumber
\end{align}

Especially, 
\begin{align}
\sup_{x\in M^n} |Rm(x, t)|\leq C(n, \widehat{N}_1, r_0) (t^{-1}+ t), \quad 0< t\leq  \frac{\widehat{T}_4}{2}  \nonumber
\end{align}
where $C(n, \widehat{N}_1, r_0)$ depends only on $n$, $\widehat{N}_1$ and $r_0$.
}
\end{theorem}

\end{document}